\newtheorem{theorem}{Theorem}[section]
\newtheorem{lemma}{Lemma}[section]
\newtheorem{proposition}{Proposition}[section]
\theoremstyle{definition}
\newtheorem{definition}{Definition}[section]
\theoremstyle{remark}
\newtheorem{remark}{Remark}[section]
\numberwithin{equation}{section}
\def\f{\frac}
\def\hf1{^\f{1}{1-\xi^2}}
\def\be{\begin{equation}}
\def\en{\end{equation}}
\def\bs{\begin{split}}
\def\es{\end{split}}
\def\ba{\begin{align}}
\def\ea{\end{align}}
\title[Homogenization of the distribution-dependent stochastic abstract fluid models]
{Homogenization of the distribution-dependent stochastic abstract fluid models}
\author[J. Chen]{Junlong Chen}
\address{School of Sciences, Great Bay University,  Great Bay Institute for Advanced Study, Dongguan 523000, China.
}
\address{School of Mathematics, University of Science and Technology of China, Hefei, 230026, Anhui, China.}
\email{chenjunlong@gbu.edu.cn}
\author[Z. Qiu]{Zhaoyang Qiu}
\address{School of Applied Mathematics, Nanjing University of Finance and Economics, Nanjing, 210046, China.}
\email{zhqmath@163.com}
\author[Y. Tang]{Yanbin Tang}
\address{School of Mathematics and Statistics, Hubei Key Laboratory of Engineering Modeling and Scientific Computing, Huazhong University of Science and Technology, Wuhan, Hubei 430074, China.}
\email{tangyb@hust.edu.cn}
\keywords{Homogenization,  nonlinear stochastic partial differential equations, distribution-dependent, $two\!-\!scale$ convergence}
\subjclass[2010]{35Q35, 76D05, 35R60, 60F10}
\date{\today}
\begin{document}
\begin{abstract}
In this paper, we study the homogenization of the distribution-dependent stochastic abstract fluid models by combining the $two\!-\!scale$ convergence and martingale representative approach. A general framework of the homogenization research is established for stochastic abstract fluid models which are the type of nonlinear partial differential equations including the (distribution-dependent) stochastic Navier-Stokes equations, stochastic magneto-hydrodynamic equations, stochastic Boussinesq equations, stochastic micropolar equations, stochastic Allen-Cahn equations.
\end{abstract}

\maketitle
\section{Introduction}

Homogenization is an important subject of asymptotic analysis, which entails the process of rendering the chaotic systems more uniform or consistent in its properties, and often involves the blending of different components to create a homogeneous mixture. We could find its applications in various fields, such as the material science, engineering and physics. Particularly, in material science, it could be used to predict the homogeneous properties and improve product performance, see \cite{ngu2,moh2} for more backgrounds and applications. In this paper, we consider the qualitative homogenization of the distribution-dependent stochastic abstract fluid models with multiplicative noise
\begin{eqnarray}\label{Equ1.1}
\partial_t \mathbf{u}+A^\varepsilon \mathbf{u}+B(\mathbf{u}, \mathbf{u})=F(\mathbf{u}, \mathfrak{L}_{\mathbf{u}(t)})+G(t, \mathbf{u})\frac{dW}{dt},
\end{eqnarray}
with the boundary condition and initial data
\begin{align}\label{i1}\mathbf{u}|_{\partial D}=0,~ \mathbf{u}|_{t=0}=\mathbf{u}_0,\end{align}
where $D$ is a smooth bounded domain in $\mathbb{R}^N, ~N=2,3$. Thus, we consider the limit problem of system \eqref{Equ1.1}-\eqref{i1} as $\varepsilon\rightarrow 0$.
Such a theoretical research of this problem not only brings the convenience to numerical calculations, but also promotes the application of mathematics in dynamical and thermodynamic mechanism.

The McKean-Vlasov (distribution-dependent) stochastic differential equations were studied originally by McKean and Vlasov in the series of works \cite{mck,vla}, which could be considered as the weak limit of $M$-interacting particle systems (Propagation of Chaos). These models are more effective and powerful in studying the systems with a large number of particles, compared with studying the particle systems themselves. Therefore, the distribution-dependent stochastic differential equations are widely used in the fields of queuing systems, stochastic control and mathematical finance, see \cite{CD} for more applications.
Moreover, in the recent work \cite{MeanHuang} of  mean-field games, they introduced that the evolution of individual agents depends on the statistical distribution of other agents, and the system's behavior is captured by distribution-dependent equations.
As aforementioned, system \eqref{Equ1.1} could be interpreted as the mean field limit of the following $M$-interacting hydrodynamical systems
\begin{eqnarray*}
\partial_t \mathbf{u}^{M,i}+A^\varepsilon \mathbf{u}^{M,i}+B(\mathbf{u}^{M,i}, \mathbf{u}^{M,i})=\frac{1}{M}\sum_{j=1}^MF(\mathbf{u}^{M,i}, \mathbf{u}^{M,j})+G(t, \mathbf{u}^{M,i})\frac{dW}{dt},
\end{eqnarray*}
for $1\leq i\leq M,~ M\in \mathbb{N}^+$. A typical example of the interaction kernel $F$ is
$$F(\mathbf{u}^{M,i}, \mathbf{u}^{M,j})=\mathbf{u}^{M,i}- \mathbf{u}^{M,j}, $$
which is called the Stokes drag force, and it is proportional to the relative velocity of particles.
Corresponding to the explicit expression of $F$, in the limit system \eqref{Equ1.1}, the form of $F$ is
$$F(\mathbf{u}, \mathfrak{L}_{\mathbf{u}(t)})=\int (\mathbf{u}-y) \mathfrak{L}_{\mathbf{u}(t)}(dy),$$
where $ \mathfrak{L}_{\mathbf{u}(t)}$ is the distribution of $\mathbf{u}(t)$.

We next introduce the meaning of notations in system \eqref{Equ1.1}. The term $A^\varepsilon \mathbf{u}$ represents the diffusion effect, wherein the specific form of differential operator $A^\varepsilon$ is given by
$$A^\varepsilon=-\sum_{i,j=1}^{N}\frac{\partial}{\partial x_i}\left(a^\varepsilon_{i,j}\frac{\partial}{\partial x_j}\right),$$
where the coefficient
$$a^\varepsilon_{i,j}=a_{i,j}\left(\frac{x}{\varepsilon}, \frac{t}{\varepsilon}\right)$$
is symmetric, thus
$$a_{i,j}=a_{j,i},$$
and $a_{i,j}\in L^\infty (\mathbb{R}^N_y\times \mathbb{R}_{\tau})$, the spaces $\mathbb{R}^N_y, \mathbb{R}_{\tau}$ are the space $\mathbb{R}^N$ of variable $y=(y_1,y_2, \cdots, y_N)$, the space $\mathbb{R}$ of variable $\tau$, $\varepsilon>0$ is the scale parameter.  In material science, the coefficient $a\left(\frac{x}{\varepsilon}, \frac{t}{\varepsilon}\right)$ could be used to describe the microscopic characteristics. As the scale parameter $\varepsilon$ getting smaller, this could reveal the intrinsic properties of composite materials, thereby providing theoretical support for the efficient use of such composite materials.

We assume that operator $A^\varepsilon$ satisfies the uniformly elliptic condition, thus, there exists a constant $\kappa>0$ such that
\begin{align}\label{1.2}
\sum_{i,j=1}^Na_{i,j}(x,t)\xi_i \xi_j\geq \kappa |\xi|^2,
\end{align}
for any $x, \xi\in \mathbb{R}^N, t\in \mathbb{R}$. Here $|\cdot|$ is the Euclidean norm in $\mathbb{R}^N$. And the coefficient $a_{i,j}$ satisfies the periodicity hypothesis, for any $y\in \mathbb{R}^N, \tau\in \mathbb{R}$ and $\widetilde{y}\in \mathbb{Z}^N, \widetilde{\tau}\in \mathbb{Z}$,
\begin{align*}a_{i,j}(y+\widetilde{y}, \tau+\widetilde{\tau})=a_{i,j}(y,\tau).\end{align*}
The $B(\mathbf{u}, \mathbf{u})$ is a nonlinear term, $G$ is a noise intensity operator and $W$ is a cylindrical $Q$-Wiener process which will be introduced later.

The theory of homogenization under the periodic environment was developed by  Bensoussan et al. in the pioneering work \cite{ben}. After that, this area has received broadly attentions both in the deterministic and the random homogenization theory. Nguetseng \cite{ngu2} initially designed a general convergence result to deal with periodic rapidly oscillating coefficients, which is proved to be a powerful tool in the research of homogenization theory. Two extra variables arise in the weak limit, and capture the property of the micro oscillations. Allaire \cite{all} proposed the concept of $two\!-\!scale$ convergence based on the  Nguetseng theory and developed its properties. Then,  Allaire et al. \cite{all2,all3} further studied the homogenization of the nonlinear reaction-diffusion equation with a large oscillation reaction term, and the dispersion theory for reactive transport through porous media.  Signing \cite{sig,sig2} considered the homogenization for the unsteady Stokes type equations and the unsteady Navier-Stokes equations. Tang et al. \cite{CT} studied the homogenization of non-local nonlinear $p$-Laplacian equation with variable index and periodic structure. Using the $sigma$-convergence method, Nguetseng et al.\cite{ngu} considered the homogenization for the stationary Navier–Stokes type equations.

Regarding the random homogenization theory, some results are available related to stochastic PDEs with periodic coefficients, see \cite{ ich, par, wang1, wang2} and references therein. Particularly,  Sango et al.\cite{raz} generalized the result \cite{all2} to the stochastic case under the condition of the almost periodic framework, moreover,  in \cite{moh,san} the authors extended the homogenization result to the stochastic semilinear reaction-diffusion equations with non-Lipschitz forcing, and to the linear hyperbolic type of stochastic PDEs, see also \cite{ben2,raz2,san3} for the related results. Note that these works only involve the nonlinear case of parabolic type equations with the $m$-dimensional standard Wiener process, thus, $W=(W_1,\cdots, W_m)\in C([0,T];\mathbb{R}^m)$, while there is no result in the genuine-nonlinear stochastic PDEs, such as the  hydrodynamical systems, the Allen-Cahn equations.  Duan et al. \cite{HJS2} considered the homogenization of non-local stochastic PDEs related to stochastic differential equations with L\'{e}vy noise, see also \cite{HJS} for the non-symmetric jump processes.

We consider the distribution-dependent stochastic abstract fluid models in this paper, a general framework of the homogenization research is established for the genuine-nonlinear fluid models which cover the stochastic Navier-Stokes equations, stochastic micropolar equations, stochastic magneto-hydrodynamic equations, etc.. The main contribution of this paper is that we improve the previous results in the following three points: (i). We improve the result to a cylindrical $Q$-Wiener process, thus, an infinite-dimensional process. This  is more general
than what was done in the previous works  \cite{moh,san, raz, ben2,raz2,san3} which concerned stochastic linear or semilinear PDEs with the finite-dimensional noise $W$;
(ii) The systems we confront include the strong nonlinear term. We only impose the locally monotonicity condition on $B(\mathbf{u}, \mathbf{u})$, while the previous works dealt with the nonlinear force $f(t, \mathbf{u})$ satisfying the global Lipschitz and linear growth conditions;
(iii) The third  point is that the external force $F$ relies on $\mathfrak{L}_{\mathbf{u}(t)}$ the distribution of solution. To the best of our knowledge, this is the first result in the literature on the research of homogenization. Also, we only impose the locally monotonicity condition on $F$ which covers the case of polynomial function $x-x^3$.

The compactness spaces needed for passing the limit are endowed with the weak topology, the classical Skorokhod representation theorem does not appropriate for our case. We use the Jakubowski version of the Skorokhod representation theorem to achieve it by verifying the existing of a countable set of continuous real-valued
functions separating points of compactness spaces. The infinite-dimensional noise and the dependence of distribution cause the difficulty in passing the limit, the martingale representative approach is invoked to  overcome it.

It is worthwhile to notice that $a^\varepsilon$ could be more general, that is to say $a^\varepsilon_{i,j}=a_{i,j}\left(\frac{x}{\varepsilon}, \frac{t}{\varepsilon_1}\right)$, where $\varepsilon_1$ is a positive function of $\varepsilon$ tending to $0$ as $\varepsilon\rightarrow 0$. We believe that  multi-scale $sigma$-convergence could be established.  Another interested question is that will we get the same limit equations as $\varepsilon\rightarrow 0$ firstly, and then $M\rightarrow\infty$ in the $M$-interacting hydrodynamical systems? Such a study is postponed till further publication. Also, the periodicity hypothesis could be extended to the almost periodic case, like \cite{raz, raz2}.

The rest of paper is organized as follows. We introduce some preliminaries including functional spaces and operators, assumptions, and the main result in section 2. At the end of this section, we illustrate our result to several typical  hydrodynamical systems and the Allen-Cahn equations. In section 3, we establish the necessary a priori estimates and the compactness. In section 4, we recall the $two\!-\!scale$ convergence and  prove the homogenization result. We obtain a corrector result which improves the convergence of $\nabla \mathbf{u}^\varepsilon$ in $L^2(\Omega\times D_t),~\Sigma\!-\!{\rm weak}$ to the $L^2(\Omega\times D_t),~\Sigma\!-\!{\rm strong}$ in section 5.
An appendix is included afterwards to state two results that are frequently used in the paper. Throughout the paper, universal constants depending only on the dimension, or initial data and other parameters are denoted by $C$, etc. which may be different at each occurrence.

\section{preliminary}
In this section, we introduce some preliminaries including functional spaces and operators, assumptions, and the main result. Several typical examples of  hydrodynamical systems and the Allen-Cahn equations that could be covered by our theory are illustrated at the end of this part.

{\bf Functional spaces and operators}. For any $m\in \mathbb{N}^+, p\geq 1$, denote by $H^{m, p}$ the Sobolev spaces of functions having distributional derivatives up to order $m\in \mathbb{N}^{+}$, which is integrable in $L^{p}(D)$. We denote by $H^{-m, p'}$ the dual of $H^{m, p}$, $p'$ is the conjugate index of $p$.
 Denote by $H:=L^2, V:=H^{1,2}$ two Hilbert spaces with $V\subset H$, which is dense and compact. Denote by $V'$ the dual of space $V$, then we could define
the Gelfand inclusions $V\subset H\subset V'$. We denote by $(\cdot, \cdot), \|\cdot\|_{H}, \|\cdot\|_{V}$ the inner product of $H$ and the norms $H, V$. The duality product between $V, V'$ is denoted by $(\cdot, \cdot)_{V\times V'}$.

Let $\mathscr{P}(X)$ be the space of all probability measures on Banach space $X$ equipped with the weak topology, for any $p>0$, denote
$$\mathscr{P}_p(X):=\left\{\mu\in \mathscr{P}(X): \mu(\|\cdot\|_{X}^p)=\int_{X}\|x\|^p_{X}\mu(dx)<\infty\right\}$$
endowed with the $p$-Wasserstein distance
$$\mathcal{W}_{p,X}:=\inf_{\pi\in \mathscr{C}(\mu,\nu)}\left(\int_{X\times X}\|x-y\|_{X}^p\pi(dx, dy)\right)^{\frac{1}{p\vee 1}},$$
where $\mu,\nu\in \mathscr{P}(X)$ and $\mathscr{C}(\mu,\nu)$ is the set of all couples $\mu$ and $\nu$. Note that the space  $\mathscr{P}_p(X)$ is a Polish space under the $p$-Wasserstein distance, see \cite{CD} for more details.

Denote by $C([0,T];X)$ the space of all the continuous functions from $[0,T]$ into Banach space $X$, with the norm
$$\|\mathbf{u}\|_{C([0,T];X)}=\sup_{t\in [0,T]}\|\mathbf{u}(t)\|_{X}.$$
The   $p$-Wasserstein distance on $\mathscr{P}_{p}(C([0,T];X))$ is given by
$$\mathcal{W}_{p,T,X}=\inf_{\pi\in \mathscr{C}(\mu,\nu)}\left(\int_{C([0,T];X)\times C([0,T];X)}\|x-y\|_{C([0,T];X)}^p\pi(dx, dy)\right)^{\frac{1}{p\vee 1}}.$$

We assume that $B$ is a bilinear map from $V\times V$ into $H^{-1, \frac{N}{N-1}}$ with following properties
\begin{align}\label{2.1*}
(B(\mathbf{u},\mathbf{v}), \mathbf{v})=0, ~{\rm for~ any}~\mathbf{u}, \mathbf{v}\in V,
\end{align}
and
\begin{align}\label{2.3}\|B(\mathbf{u}, \mathbf{u})\|_{H^{-1, \frac{N}{N-1}}}^2\leq C\|\mathbf{u}\|_{H}^2\|\mathbf{u}\|_{V}^2,\end{align}
where the positive constant $C$ relies only on dimension $N$ and the domain $D$.
Particularly, for $N=2$, we have
\begin{align*}\|B(\mathbf{u}, \mathbf{u})\|_{V'}^2\leq C\|\mathbf{u}\|_{H}^2\|\mathbf{u}\|_{V}^2,\end{align*}
thus,
\begin{align}\label{2.2}
(B(\mathbf{u},\mathbf{v}), \mathbf{u})\leq C\|\mathbf{u}\|_{H}\|\mathbf{u}\|_{V}\|\mathbf{v}\|_{V}, ~{\rm for~ any}~\mathbf{u}, \mathbf{v}\in V.
\end{align}
Using \eqref{2.1*} and \eqref{2.2}, we see that the operator $B$ satisfies the local monotonicity condition
\begin{align}\label{2.4}
(B(\mathbf{u}, \mathbf{u})-B(\mathbf{v}, \mathbf{v}), \mathbf{u}-\mathbf{v})=(B(\mathbf{u}-\mathbf{v}, \mathbf{u}), \mathbf{u}-\mathbf{v})\leq C\|\mathbf{u}-\mathbf{v}\|_{H}\|\mathbf{v}\|_{V}\|\mathbf{u}-\mathbf{v}\|_{V}.
\end{align}

For the linear diffusion term,  we understand $A^\varepsilon$ as the bounded operator from $V$ into $V'$ with the duality product
$$(A^\varepsilon \mathbf{u}, \mathbf{v})_{V'\times V}=\sum_{i,j=1}^{N}\left(a^\varepsilon_{i,j}\frac{\partial \mathbf{u}}{\partial x_j}, \frac{\partial \mathbf{v}}{\partial x_i}\right), ~{\rm for}~\mathbf{u}, \mathbf{v}\in V.$$
Since the embedding $V$ into $H$ is compact, it follows that $(A^\varepsilon)^{-1}$ as a map from $H$ into $V$ is compact on $H$. From the symmetrically and the compactness of operator, we have the existence of a complete orthonormal basis $\{\mathbf{e}_{k}\}_{k\geq 1}$ for $H$ of eigenfunctions of $A^\varepsilon$.

{\bf Stochastic framework}. Let $\mathcal{S}:=(\Omega,\mathcal{F},\{\mathcal{F}_{t}\}_{t\geq0},\mathrm{P}, W)$ be a fixed stochastic basis and $(\Omega,\mathcal{F},\mathrm{P})$ a complete probability space. $\{\mathcal{F}_{t}\}_{t\geq0}$ is a filtration satisfying all usual conditions.
Denote by $L^p(\Omega; L^q(0,T;X)), p\in [1,\infty), q\in [1, \infty]$ the space of processes with values in $X$ defined on $\Omega\times [0,T]$ such that

i. $\mathbf{u}$ is measurable with respect to $(\omega, t)$, and for each $t\geq 0$, $\mathbf{u}(t)$ is $\mathcal{F}_{t}$-measurable;

ii. For almost all $(\omega, t)\in \Omega\times [0,T]$, $\mathbf{u}\in X$ and
\begin{align*}
\|\mathbf{u}\|^p_{L^p(\Omega; L^q(0,T;X))}\!=\!
\begin{cases}
\mathrm{E}\left(\int_{0}^{T}\|\mathbf{u}\|_{X}^qdt\right)^\frac{p}{q},~& {\rm if}~q\in [1,\infty),\\
\mathrm{E}\left(\sup_{t\in[0,T]}\|\mathbf{u}\|^p_X\right),~ & {\rm if}~q=\infty.
\end{cases}
\end{align*}
Here, $\mathrm{E}$ denotes the mathematical expectation.

Assume that $Q$ is a linear positive operator on the Hilbert space $H$, which is trace and hence compact. Let $W$ be a Wiener process defined on the Hilbert space $H$ with covariance operator $Q$, which is adapted to the complete, right continuous filtration $\{\mathcal{F}_{t}\}_{t\geq 0}$. Let $\{\mathbf{e}_{k}\}_{k\geq 1}$ be a complete orthonormal basis of $H$ such that $Q\mathbf{e}_{i}=\lambda_{i}\mathbf{e}_{i}$, then $W$ can be written formally as the expansion $W(t,\omega)=\sum_{k\geq 1}\sqrt{\lambda_{k}}\mathbf{e}_{k}W_k(t,\omega)$, where $\{W_{k}\}_{k\geq 1}$ is a sequence of independent standard 1-D Brownian motions, see \cite{Zabczyk} for more details.

Let $H_{0}=Q^{\frac{1}{2}}H$, then $H_{0}$ is a Hilbert space with the inner product
\begin{equation*}
\langle h,g\rangle_{H_{0}}=\langle Q^{-\frac{1}{2}}h,Q^{-\frac{1}{2}}g\rangle_{H},~ \forall~~ h,g\in H_{0},
\end{equation*}
with the induced norm $\|\cdot\|_{H_{0}}^{2}=\langle\cdot,\cdot\rangle_{H_{0}}$. The imbedding map $i:H_{0}\rightarrow H$ is Hilbert-Schmidt and hence compact operator with $ii^{\ast}=Q$, where $i^{\ast}$ is the adjoint of the operator $i$. We have that, $W\in C([0,T]; H_0)$ almost surely. Now let us consider another separable Hilbert space $X$ and let $L_{Q}(H_{0},X)$ be the space of linear operators $S:H_{0}\rightarrow X$ such that $SQ^{\frac{1}{2}}$ is a linear Hilbert-Schmidt
operator from $H$ to $X$,  endowed with
  the norm $$\|S\|_{L_{Q}}^{2}=tr(SQS^{\ast}) =\sum\limits_{k\geq 1}\| SQ^{\frac{1}{2}}\mathbf{e}_{k}\|_{X}^{2}.$$
 Set $$L_2(H,X)=\left\{SQ^{\frac{1}{2}}: \, S\in L_{Q}(H_{0},X)\right\},$$ the norm is defined by
  $\|f\|^2_{L_2(H,X)}  =\sum\limits_{k\geq 1}\|f\mathbf{e}_{k}\|_{X}^{2}$.

In order to control the martingale part, we recall the following well-known Burkholder-Davis-Gundy inequality: for any $f\in  L^{p}(\Omega;L^{2}_{loc}([0,\infty),L_{2}(H,X)))$,  by taking $f_{k}=f \mathbf{e}_{k}$, there exists a constant $c_p>0$ such that
\begin{eqnarray*}
\mathrm{E}\left(\sup_{t\in [0,T]}\left\|\int_{0}^{t}f d\mathcal{W}\right\|_{X}^{p}\right)\leq c_{p}\mathrm{E}\left(\int_{0}^{T}\|f\|_{L_{2}(H,X)}^{2}dt\right)^{\frac{p}{2}}
=c_{p}\mathrm{E}\left(\int_{0}^{T}\sum_{k\geq 1}\|f_k\|_{X}^{2}dt\right)^{\frac{p}{2}},
\end{eqnarray*}
for any $p\in[1,\infty)$, see also \cite[Theorem 4.36]{Zabczyk}.

{\bf Assumptions on $F$ and $G$}.   Assume that there exists a positive constant $C$ such that the measurable map
$$F: H\times \mathscr{P}(V')\rightarrow H$$
satisfies

$(\mathbf{F.1})$ [Growth condition] for any $\mathbf{u}\in H$, $\mu\in \mathscr{P}_2(H)$,
 \begin{align*}
(F(\mathbf{u}, \mu), \mathbf{u})\leq C(\|\mathbf{u}\|_H^2+\mu(\|\cdot\|_{H}^2)-\|\mathbf{u}\|_{L^{4}}^{4});
\end{align*}

 $(\mathbf{F.2})$ for any $\mathbf{v}\in H$, and $\mathbf{u}^\varepsilon, \mathbf{u}\in H$ with $\|\mathbf{u}^\varepsilon-\mathbf{u}\|_{H}\rightarrow 0$, $\mu^\varepsilon, \mu\in \mathscr{P}_2(H)$ with $\mu^\varepsilon\rightarrow\mu$ in $\mathscr{P}_2(V')$,
 $$\lim_{\varepsilon\rightarrow 0}(F(\mathbf{u}^\varepsilon, \mu^\varepsilon), \mathbf{v})=(F(\mathbf{u}, \mu), \mathbf{v});$$

$(\mathbf{F.3})$ [Local monotonicity condition]  for any $\mathbf{u}_1, \mathbf{u}_2\in H$, $a\geq 2$, $\mu_1, \mu_2\in \mathscr{P}_a(H)$, the locally Lipschitz condition holds
 $$(F(\mathbf{u}_1, \mu_1)-F(\mathbf{u}_2, \mu_2), \mathbf{u}_1-\mathbf{u}_2)\nonumber$$
 $$\leq C(1+\mu_1(\|\cdot\|_{H}^a)+\mu_2(\|\cdot\|_{H}^a))\|\mathbf{u}_1-\mathbf{u}_2\|_{H}^2
 +\eta\|\mathbf{u}_1-\mathbf{u}_2\|_{V}^2
 +\ell \mathcal{W}_{2, H}(\mu_1, \mu_2),$$
where  $\ell$ is a certain constant being smaller than $\frac{\kappa-2\eta}{2}$ and $\eta$ is a constant satisfying $\eta<\frac{\kappa}{2}$.

An example of the measurable map $F$ is
$$F(\mathbf{u}, \mathfrak{L}_{\mathbf{u}(t)})=F_1(\mathbf{u}, \mathfrak{L}_{\mathbf{u}(t)})+F_2(t, \mathbf{u}),$$
where $F_1(\mathbf{u}, \mathfrak{L}_{\mathbf{u}(t)})=\int (\mathbf{u}-y) \mathfrak{L}_{\mathbf{u}(t)}(dy)$, while $F_2(t, \mathbf{u})$ satisfies
$$F_2(t, 0)=0,$$
and
\begin{align*}
(F_2(t, \mathbf{u}), \mathbf{u})\leq C(-\|\mathbf{u}\|_{L^{4}}^{4}+\|\mathbf{u}\|_{H}^{2}),
\end{align*}
and
 \begin{align*}&(F_2(t, \mathbf{u}_1)-F_2(t, \mathbf{u}_2), \mathbf{u}_1-\mathbf{u}_2)\leq C\|\mathbf{u}_1-\mathbf{u}_2\|_{H}^2+\eta\|\mathbf{u}_1-\mathbf{u}_2\|_{V}^2.
\end{align*}
Obviously, map $F$ satisfies assumptions $(\mathbf{F.1}), (\mathbf{F.3})$.

A typical example of function $F_2(t, \mathbf{u})$ satisfying above conditions is a cubic polynomial, thus,
$$F_2(t, \mathbf{u})=-c(t)\mathbf{u}^{3}+\xi(t)\mathbf{u},$$
where $c(t)>c>0$, $c$ is a constant, $\xi(t)>0$ is continuous function of $t$. Therefore, our result could also cover the stochastic Allen-Cahn equations.

 For the noise intensity operator $G$, we impose the following conditions: There exists a positive constant $C$ such that the Lipschitz and linear growth conditions hold

 $(\mathbf{G.1})~~ \|G(t, \mathbf{u}_1)-G(t, \mathbf{u}_2)\|^2_{L_2(H,H)}\leq C\|\mathbf{u}_1-\mathbf{u}_2\|_{H}^2, ~{\rm for}~ \mathbf{u}_1, \mathbf{u}_2\in H; $

 $(\mathbf{G.2})~~ \|G(t, \mathbf{u})\|^2_{L_2(H,H)}\leq C(1+\|\mathbf{u}\|_{H}^2),  ~{\rm for}~ \mathbf{u}\in H.$

When $F_2(t, \mathbf{u})=0$, the well-posedness result of system \eqref{Equ1.1}-\eqref{i1} is given by \cite{HHL}. Here we also have the following well-posedness result.
\begin{lemma} Let $\mathcal{S}$ be the fixed stochastic basis. Under the assumptions $(\mathbf{F.i}), \mathbf{i}=1,2,3$, $(\mathbf{G.j}), \mathbf{j}=1,2$ and initial data $\mathbf{u}_0\in H$, for every $\varepsilon>0$, $T>0$,  there exists a pathwise solution of system \eqref{Equ1.1}-\eqref{i1} in the following sense:

i. $\mathbf{u}^\varepsilon$ is a sequence of $H$-valued $\mathcal{F}_t$-adapt processes with the regularity
$$\mathbf{u}^\varepsilon\in L^p(\Omega; L^\infty(0,T;H)\cap L^2(0,T;V)\cap L^{4}(D\times (0,T)))$$
for any $p\geq 2$;

ii. for any $t\in [0,T]$, $\varphi\in V$, it holds $\mathrm{P}$ a.s.
\begin{align}
(\mathbf{u}^\varepsilon(t), \varphi)=&(\mathbf{u}_0, \varphi)-\int_0^t(A^\varepsilon \mathbf{u}^\varepsilon, \varphi)_{ V'\times V}ds-\int_0^t(B(\mathbf{u}^\varepsilon, \mathbf{u}^\varepsilon),\varphi)ds\nonumber\\
&+\int_{0}^{t} (F(\mathbf{u}^\varepsilon, \mathfrak{L}_{\mathbf{u}^\varepsilon(s)}), \varphi)ds+\int_{0}^{t} (G(s, \mathbf{u}^\varepsilon)dW, \varphi);
\end{align}

iii. $\mathbf{u}\in C([0,T];V')$, $\mathrm{P}$ a.s.

If $N=2$, we have that the solution is pathwise unique. Particularly, the pathwise uniqueness of solution to the stochastic Allen-Cahn equations holds for both $N=2,3$.
\end{lemma}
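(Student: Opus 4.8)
\emph{Strategy.} The plan is to treat \eqref{Equ1.1}--\eqref{i1} as a McKean--Vlasov type equation: first decouple the dependence on the law, then solve the equation with the measure argument frozen by a Galerkin scheme together with the local monotonicity method, and finally reinsert the law by a fixed point argument based on $(\mathbf{F.3})$; when $N=3$ I would instead run the Galerkin scheme directly on the full system. The case $F\equiv F_1$ is the well-posedness result of \cite{HHL}, and since the argument only uses the structural conditions $(\mathbf{F.1})$--$(\mathbf{F.3})$ and $(\mathbf{G.1})$--$(\mathbf{G.2})$, the same proof applies to a general $F$ satisfying them; in particular the cubic example $F(\u,\mathfrak{L}_{\u})=F_1(\u,\mathfrak{L}_{\u})-c(t)\u^3+\xi(t)\u$ is admissible, which is why the stochastic Allen--Cahn equation is covered.

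\emph{A priori estimates.} Using the Galerkin approximation $\u^n=\sum_{k\le n}(\u,\mathbf{e}_k)\mathbf{e}_k$ and the It\^o formula for $\|\u^n\|_H^2$: the nonlinear term drops by \eqref{2.1*}, the diffusion term is bounded below by $\kappa\|\u^n\|_V^2$ via \eqref{1.2}, the reaction term is controlled by $(\mathbf{F.1})$ -- whose dissipative contribution $-\|\u^n\|_{L^4}^4$, moved to the left-hand side, produces the $L^{4}(D\times (0,T))$ bound -- and the stochastic term by $(\mathbf{G.2})$ and the Burkholder--Davis--Gundy inequality. Taking suprema in time, raising to the power $p$ and using Gr\"onwall's lemma yields, uniformly in $n$ and in $\ve$, bounds for $\u^n$ in $L^p(\O;L^\infty(0,T;H)\cap L^2(0,T;V)\cap L^{4}(D\times (0,T)))$ for every $p\ge2$, while \eqref{2.3} bounds $B(\u^n,\u^n)$ in a negative-order space and hence, together with the equation, gives the path continuity iii in the limit.

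\emph{The case $N=2$ (and the Allen--Cahn equation for $N=2,3$).} Here I would first solve \eqref{Equ1.1} with a frozen flow of measures $\mu=(\mu_t)$ having finite $H$-moments of all orders: the coefficients then meet the coercivity, hemicontinuity and local monotonicity requirements of the variational framework -- the last one for $B$ through \eqref{2.4} and \eqref{2.2} -- so the Galerkin limit exists on the given basis $\mathcal{S}$, with $\lim B(\u^n,\u^n)$ identified by a Minty-type monotonicity argument and $\lim F$ by $(\mathbf{F.2})$, and pathwise uniqueness of this frozen problem follows from the It\^o formula for $\|\u_1-\u_2\|_H^2$, \eqref{2.4}, Young's inequality against $\kappa\|\u_1-\u_2\|_V^2$, $(\mathbf{F.3})$ with $\mu_1=\mu_2$, $(\mathbf{G.1})$ and Gr\"onwall. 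Denoting by $\Phi(\mu)$ the law of this unique solution, the difference estimate for two frozen flows $\mu^1,\mu^2$ produces the extra terms $C(\mu_t^1(\|\cdot\|_H^a)+\mu_t^2(\|\cdot\|_H^a))\|\u^1-\u^2\|_H^2$ -- absorbed by Gr\"onwall since these moments are uniformly bounded by the a priori estimates -- and the $(\mathbf{F.3})$ term involving $\mathcal{W}_{2,H}(\mu_t^1,\mu_t^2)$, which upon coupling $\u^1$ and $\u^2$ on the common basis is controlled by $\mathrm{E}\|\u^1(t)-\u^2(t)\|_H^2$; since $\eta<\frac{\kappa}{2}$ and $\ell<\frac{\kappa-2\eta}{2}$, the $V$-coercivity strictly dominates, so $\Phi$ is a contraction on a short interval and iterating over $[0,T]$ yields a unique fixed point -- the desired pathwise solution obeying i--iii. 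Its pathwise uniqueness is obtained from the same estimate with $\mu^i=\mathfrak{L}_{\u^i(\cdot)}$ and the coupling bound for $\mathcal{W}_{2,H}$.

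\emph{The case $N=3$ and the main difficulty.} For general fluid models with $N=3$ pathwise uniqueness is unavailable, so I would run the Galerkin scheme on the full distribution-dependent system, where $\mathfrak{L}_{\u^n(t)}$ is simply the law of the Galerkin process, establish tightness of the laws of $\{\u^n\}$ in the natural spaces -- which carry weak topologies -- and invoke the Jakubowski version of the Skorokhod representation theorem to pass to a limit on a new stochastic basis; since the laws then converge together with the processes and the $L^\infty(0,T;H)$ bound gives tightness in $\mathscr{P}_2(V')$, one identifies $\lim F(\u^n,\mathfrak{L}_{\u^n(t)})$ by $(\mathbf{F.2})$ and $\lim B(\u^n,\u^n)$ by local monotonicity, obtaining a solution with the properties i and iii. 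I expect the main obstacle to be handling the three difficulties at once -- the genuinely nonlinear $B$, whose limit must be identified by a Minty argument on weak-topology compactness sets obtained through Jakubowski--Skorokhod, together with the infinite-dimensional multiplicative noise and the measure dependence -- and, in particular, tracking the ellipticity constant $\kappa$ accurately enough through the energy estimates that the smallness $\ell<\frac{\kappa-2\eta}{2}$ genuinely turns $\Phi$ into a contraction and the $L^4$-dissipation is not lost in passing to the limit.
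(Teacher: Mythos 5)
The first thing to say is that the paper does not actually prove this lemma: it remarks that the case $F_2(t,\mathbf{u})=0$ is the well-posedness result of \cite{HHL} and then simply asserts the general statement. So there is no in-paper argument to compare against line by line. Your sketch is, in substance, a reconstruction of the strategy of the cited reference (Galerkin approximation in the locally monotone variational framework, the It\^o/BDG/Gr\"onwall energy estimates that the paper itself reproduces later in Lemma 3.1, decoupling of the measure by freezing $\mu$, and a Wasserstein fixed point for $\Phi(\mu)=\mathfrak{L}_{\mathbf{u}^{\mu}}$ exploiting $\ell<\frac{\kappa-2\eta}{2}$), and your observation that only the structural hypotheses $(\mathbf{F.1})$--$(\mathbf{F.3})$, $(\mathbf{G.1})$--$(\mathbf{G.2})$ enter is exactly the (unstated) justification the paper relies on to extend \cite{HHL} to the cubic $F_2$ and hence to the Allen--Cahn case. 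In that sense your proposal supplies the argument the paper omits, and for $N=2$ (and for Allen--Cahn in $N=2,3$, where $B\equiv 0$ and the frozen problem is genuinely monotone) it is a correct and complete plan.

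The one place where your route does not deliver the stated conclusion is the general case $N=3$. The lemma claims a \emph{pathwise} solution on the fixed basis $\mathcal{S}$ for every $\varepsilon>0$, including $N=3$; your construction there passes through tightness and the Jakubowski--Skorokhod theorem and therefore produces a martingale (probabilistically weak) solution on a \emph{new} stochastic basis. Without pathwise uniqueness --- which is exactly what is unavailable for the three-dimensional fluid models --- there is no Yamada--Watanabe bridge (nor its distribution-dependent refinement, Proposition 4.2 of the paper) to transport that solution back to $\mathcal{S}$. So either the $N=3$ assertion must be read as existence of a martingale solution (which is how the rest of the paper effectively uses it, since Section 3 immediately re-applies Jakubowski--Skorokhod anyway), or a different identification of the Galerkin limit on the original space is needed; as written, your final step for $N=3$ proves a weaker statement than the lemma claims. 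This defect is inherited from the paper's own formulation, but you should flag it rather than present the Skorokhod limit as a pathwise solution.
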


{\bf Main result}. We formulate the main result of our paper.
\begin{theorem} if the assumptions $(\mathbf{F.i}), \mathbf{i}=1,2,3$, $(\mathbf{G.j}), \mathbf{j}=1,2$ hold and assume that $\mathbf{u}^\varepsilon$ is a sequence of solutions of system \eqref{Equ1.1}-\eqref{i1} with regularity $$\mathbf{u}^\varepsilon\in L^p(\Omega; L^\infty(0,T;H)\cap L^2(0,T; V)\cap L^{4}(D\times (0,T))),~ uniformly ~in~ \varepsilon,~ p\geq 2.$$
Then,  we have $ \mathbf{u}^\varepsilon\rightarrow \mathbf{u}$
strongly in $L^2(\Omega; L^2(D\times (0,T)))$ as $\varepsilon\rightarrow 0$, and the limit process $\mathbf{u}$ shares the same regularity with $\mathbf{u}^\varepsilon$,
which satisfies the following homogenization equations
\begin{eqnarray*}
\partial_t \mathbf{u}+\widetilde{A }\mathbf{u}+B(\mathbf{u}, \mathbf{u})=F(\mathbf{u}, \mathfrak{L}_{\mathbf{u}(t)})+G(t, \mathbf{u})\frac{dW}{dt},
\end{eqnarray*}
in which $\widetilde{A}=(\widetilde{A}_{kl})_{k,l=1,\cdots, N}$ is the so-called homogenized operator associated to $A^\varepsilon$, $\widetilde{A}_{kl}$ is given by
$$\widetilde{A}_{kl}=-\sum_{i,j=1}^N\mathbf{a}_{i,j,k,l}\frac{\partial^2}{\partial x_i \partial y_j},~1\leq k,l\leq  N,$$
and the coefficient
$$\mathbf{a}_{i,j,k,l}:=\int_{D_\tau}a_{i,j}dy d\tau+\int_{D_{\tau}}a_{i,j}\frac{\partial\eta^l_{i,k}}{\partial y_j}dy d\tau,~1\leq i,j,k,l\leq  N,$$
and $\eta_{i,k}$ is the solution of variational problem \eqref{var1}.
\end{theorem}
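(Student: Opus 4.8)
The plan is to combine a priori estimates with compactness, pass to the limit using two--scale ($\Sigma$-) convergence, and finally identify the limit equation via a monotonicity (Minty-type) argument adapted to the stochastic and distribution-dependent setting. First I would collect the uniform (in $\varepsilon$) bounds: testing \eqref{Equ1.1} with $\mathbf{u}^\varepsilon$, using the cancellation \eqref{2.1*}, the ellipticity \eqref{1.2}, the growth conditions $(\mathbf{F.1})$, $(\mathbf{G.2})$, the Burkholder--Davis--Gundy inequality, and Gr\"onwall's lemma, one obtains $\mathbf{u}^\varepsilon$ bounded in $L^p(\Omega; L^\infty(0,T;H)\cap L^2(0,T;V)\cap L^4(D\times(0,T)))$ for all $p\ge 2$; the absorption of the distribution term uses that $\mathfrak{L}_{\mathbf{u}^\varepsilon(t)}(\|\cdot\|_H^2)=\mathrm{E}\|\mathbf{u}^\varepsilon(t)\|_H^2$ is already controlled. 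From the equation one then gets a fractional-in-time bound (e.g.\ in $W^{\alpha,p}(0,T;V')$ for small $\alpha$, splitting the stochastic integral off and estimating it separately), which together with the Aubin--Lions--Simon lemma gives tightness of the laws of $\mathbf{u}^\varepsilon$ on $L^2(0,T;H)$ and on $C([0,T];V')$, hence also tightness of $(\mathbf{u}^\varepsilon, W, \mathfrak{L}_{\mathbf{u}^\varepsilon})$ on the appropriate product space; the Wasserstein-continuity input $(\mathbf{F.2})$ will be used here.

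Next I would apply the Jakubowski--Skorokhod representation theorem (the classical version being unavailable because the natural compactness sets carry weak topologies) to obtain, on a new probability space, random variables $(\widetilde{\mathbf{u}}^\varepsilon,\widetilde W^\varepsilon)$ with the same laws, converging a.s.\ to $(\widetilde{\mathbf{u}},\widetilde W)$ in $L^2(0,T;H)\cap C([0,T];V')$ and with $\nabla\widetilde{\mathbf{u}}^\varepsilon$ converging in the two--scale sense: there is $\mathbf{u}_1\in L^2(\Omega\times(0,T)\times D; H^1_{\#}(Y\times\mathcal{T})/\mathbb{R})$ such that $\nabla\widetilde{\mathbf{u}}^\varepsilon \xrightarrow{\Sigma} \nabla_x\widetilde{\mathbf{u}} + \nabla_y \mathbf{u}_1$. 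Because of the strong $L^2(\Omega\times D\times(0,T))$ convergence of $\widetilde{\mathbf{u}}^\varepsilon$ one passes to the limit in the nonlinear term $B(\widetilde{\mathbf{u}}^\varepsilon,\widetilde{\mathbf{u}}^\varepsilon)$ (test functions in $V$, using \eqref{2.3}), in $F$ (using $(\mathbf{F.2})$ together with $\mathfrak{L}_{\widetilde{\mathbf{u}}^\varepsilon(t)}\to\mathfrak{L}_{\widetilde{\mathbf{u}}(t)}$ in $\mathscr{P}_2(V')$, which follows from uniform integrability provided by the uniform $L^p$ bounds), and in the stochastic integral (martingale representation / identification of the quadratic variation, using $(\mathbf{G.1})$). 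Plugging the standard corrector ansatz $\mathbf{u}_1(x,y,\tau)=\sum_{l}\partial_{x_l}\widetilde{\mathbf{u}}(x)\,\eta^{l}(y,\tau)$ into the two--scale limit of the weak formulation, separating the purely $y$-oscillating test functions yields the cell problem \eqref{var1} defining $\eta_{i,k}$, and choosing $x$-only test functions yields the homogenized diffusion operator $\widetilde A$ with coefficients $\mathbf{a}_{i,j,k,l}$ exactly as stated. This shows $\widetilde{\mathbf{u}}$ solves the homogenized system on the new stochastic basis.

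Finally, to transfer back to the original basis and to upgrade to strong $L^2(\Omega;L^2(D\times(0,T)))$ convergence of the original sequence, I would invoke pathwise uniqueness for the homogenized equation (available for $N=2$ and, for the Allen--Cahn case, for $N=2,3$, by the local monotonicity \eqref{2.4}, $(\mathbf{F.3})$, $(\mathbf{G.1})$ and a Gr\"onwall argument handling the Wasserstein term via $\mathcal{W}_{2,H}(\mathfrak{L}_{\mathbf{u}_1(t)},\mathfrak{L}_{\mathbf{u}_2(t)})^2\le \mathrm{E}\|\mathbf{u}_1(t)-\mathbf{u}_2(t)\|_H^2$, which is where the smallness condition $\ell<\frac{\kappa-2\eta}{2}$ enters) together with the Gy\"ongy--Krylov characterization of convergence in probability; since the limit is unique in law and pathwise, the original $\mathbf{u}^\varepsilon$ converges in probability in $L^2(0,T;H)$, and the uniform higher moments promote this to the claimed strong convergence in $L^2(\Omega;L^2(D\times(0,T)))$.

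The main obstacle I expect is the simultaneous handling of the genuinely nonlinear term $B$ and the distribution-dependence inside the two--scale limit passage on the artificial probability space: one must ensure that the strong convergence $\widetilde{\mathbf{u}}^\varepsilon\to\widetilde{\mathbf{u}}$ in $L^2(\Omega\times D\times(0,T))$ is strong enough to pass to the limit in $B(\widetilde{\mathbf{u}}^\varepsilon,\widetilde{\mathbf{u}}^\varepsilon)$ tested against $V$, while only weak ($\Sigma$-weak) convergence of $\nabla\widetilde{\mathbf{u}}^\varepsilon$ is available --- this forces a careful choice of oscillating test functions $\varphi(x)+\varepsilon\varphi_1(x,x/\varepsilon,t/\varepsilon)$ and an \emph{a posteriori} corrector argument (Section~5 of the paper) to make $\nabla\widetilde{\mathbf{u}}^\varepsilon$ converge $\Sigma$-strongly, and it requires that the identification of the martingale term be compatible with the distribution-dependent drift, which is precisely where the martingale representation approach replaces the usual pathwise arguments.
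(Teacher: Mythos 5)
Your roadmap is essentially the paper's proof: uniform estimates via It\^o's formula and Gr\"onwall, a time-regularity bound feeding an Aubin--Lions compactness argument, tightness and the Jakubowski--Skorokhod representation, $\Sigma$-convergence of $\nabla\mathbf{u}^\varepsilon$ to $\nabla_x\mathbf{u}+\nabla_y\overline{\mathbf{u}}$, identification of the stochastic integral by the martingale-representation method, the cell problem recovering $\eta_{i,k}$ and the coefficients $\mathbf{a}_{i,j,k,l}$, and pathwise uniqueness of the limit system. Three points of divergence are worth flagging. First, for transferring the solution back you propose Gy\"ongy--Krylov, whereas the paper uses a modified Yamada--Watanabe theorem for McKean--Vlasov equations (its Proposition 4.2, from \cite{HW}): one freezes the law $\mu(t)=\mathfrak{L}_{\mathbf{u}(t)}$ and proves pathwise uniqueness for the resulting \emph{decoupled} equation; a Gy\"ongy--Krylov argument would need the same decoupling, since two subsequences on a common space need not share the same law a priori, so the paper's route is the cleaner way to handle the distribution-dependence at this stage. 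Second, your concern that an a posteriori $\Sigma$-strong corrector for $\nabla\mathbf{u}^\varepsilon$ is needed to pass to the limit in $B$ is unfounded: the paper handles $B(\mathbf{u}^\varepsilon,\mathbf{u}^\varepsilon)=\mathbf{u}^\varepsilon\cdot\nabla\mathbf{u}^\varepsilon$ by pairing the strong $L^2(\Omega\times D_t)$ convergence of $\mathbf{u}^\varepsilon$ (hence $\Sigma$-strong) with the $\Sigma$-weak convergence of the gradient via the product lemma (its Lemma 4.5), and the oscillating part $\varepsilon\psi$ of the test function contributes $O(\varepsilon)$ by \eqref{2.3}; the corrector of Section 5 is a separate, additional result, not an ingredient of the main theorem. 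Third, the Minty-type monotonicity argument you announce at the outset is never needed --- neither in your own subsequent description nor in the paper --- precisely because the strong $L^2$ convergence of $\mathbf{u}^\varepsilon$ suffices for all the nonlinear and distribution-dependent terms.
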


 {\bf Example 1: Distribution-dependent stochastic Navier-Stokes equations}

 We first apply the abstract framework to the distribution-dependent stochastic Navier-Stokes equations:
\begin{eqnarray}\label{Equ1.1*}
\left\{\begin{array}{ll}
\!\!\!\partial_t \mathbf{u}+A^\varepsilon \mathbf{u}+\mathbf{u}\cdot\nabla\mathbf{u}+\nabla p=F(\mathbf{u}, \mathfrak{L}_{\mathbf{u}(t)})+G(t, \mathbf{u})\frac{dW}{dt},\\
\!\!\!\nabla\cdot \mathbf{u}=0,\\
\!\!\! \mathbf{u}(0)=\mathbf{u}_0,
\end{array}\right.
\end{eqnarray}
where $\mathbf{u}$ is the velocity field, $p$ is the pressure, $W$ is a cylindrical $Q$-Wiener process. The domain $D\subset \mathbb{R}^N$ we consider is smooth bounded and $\mathbf{u}$ satisfies the non-slip boundary condition on $\partial D$, thus,
$$\mathbf{u}|_{\partial D}=0.$$

 Denote space
$$H:=\left\{\mathbf{u}\in L^2: \nabla\cdot\mathbf{u}=0, \mathbf{u}|_{\partial D}=0\right\}$$
with the norm
$$\|\mathbf{u}\|^2_{H}=\|\mathbf{u}\|^2_{L^2}=(\mathbf{u},\mathbf{u}),$$
where $(\cdot, \cdot)$ means the inner product of $L^2$.

Denote
$$V:=\left\{\mathbf{u}\in H: \frac{\partial \mathbf{u}}{\partial x}\in L^2\right\}$$
with the norm
$$\|\mathbf{u}\|^2_{V}=\|\nabla \mathbf{u}\|^2_{L^2}=\sum_{i,j=1}^N\int_{D}\left|\frac{\partial \mathbf{u}_j}{\partial x_i}\right|^2dx.$$

Define the bilinear map
$$B(\mathbf{u},\mathbf{v})=P(\mathbf{u}\cdot\nabla) \mathbf{v},$$
where $P$ is the Helmhotz–Hodge projection operator from $L^2$ into $H$, and define the tri-linear map $b(\cdot, \cdot, \cdot): V\times V\times V\rightarrow \mathbb{R}$ by
$$b(\mathbf{u},\mathbf{v},\mathbf{w})=\int_{D}\mathbf{u}\cdot\nabla \mathbf{v}\cdot \mathbf{w}dx, ~ \mathbf{u},\mathbf{v},\mathbf{w}\in V,$$
then,
$$(B(\mathbf{u},\mathbf{v}), \mathbf{w})=b(\mathbf{u},\mathbf{v},\mathbf{w}).$$
Integrating by parts, we obtain
$$(B(\mathbf{u},\mathbf{v}), \mathbf{w})=-(B(\mathbf{u},\mathbf{w}), \mathbf{v}), ~\mathbf{u}, \mathbf{v}, \mathbf{w}\in V,$$
which implies
\begin{align*}\label{2.1}b(\mathbf{u},\mathbf{v},\mathbf{v})=0, ~\mathbf{u},\mathbf{v} \in V.\end{align*}
Moreover, we could easily check that $B(\mathbf{u, \mathbf{v}})$ satisfies \eqref{2.3}. Indeed, for $N=3$ and any $\phi\in H^{1,3}$, using the embedding $V\hookrightarrow L^6$ we see
$$(B(\mathbf{u},\mathbf{u}), \phi)=-(B(\mathbf{u},\phi), \mathbf{u})$$
$$\leq \|\mathbf{u}\|_{H}\|\mathbf{u}\|_{L^6}\|\nabla \phi\|_{L^3}\leq C\|\phi\|_{H^{1,3}}\|\mathbf{u}\|_{H}\|\mathbf{u}\|_{V},
$$
and for $N=2$ and any $\phi\in V$, by $\|\mathbf{u}\|^2_{L^4}\leq C\|\mathbf{u}\|_{L^2}\|\mathbf{u}\|_{V}$ we see
$$(B(\mathbf{u},\mathbf{u}), \phi)=-(B(\mathbf{u},\phi), \mathbf{u})\leq \|\mathbf{u}\|^2_{L^4}\|\phi\|_{V}\leq C\| \phi\|_{V}\|\mathbf{u}\|_{H}\|\mathbf{u}\|_{V},
$$
We obtain the desired estimates \eqref{2.3} and \eqref{2.2}.

Denote by $F^P=PF, G^P=PG$,  after an application of operator $P$ to equations \eqref{Equ1.1*}, we could rewrite it by abstract form as an evolutionary dynamical system
\begin{equation*}
d\mathbf{u}+A^\varepsilon\mathbf{u} dt+B(\mathbf{u},\mathbf{u})dt=F^P(\mathbf{u}, \mathfrak{L}_{\mathbf{u}(t)})dt+G^P(t, \mathbf{u})dW,
\end{equation*}
with initial data $\mathbf{u}(0)=\mathbf{u}_0$.

 {\bf Example 2: Distribution-dependent stochastic magneto-hydrodynamic equations}

The next example which could be covered by our framework is the distribution-dependent stochastic magneto-hydrodynamic equations
\begin{equation}\label{2.11}
\left\{
  \begin{array}{ll}
   d\mathbf{u}+\left(\mathbf{u}\cdot\nabla\mathbf{u}-\mathbf{T}\cdot\nabla\mathbf{T}+A_1^\varepsilon \mathbf{u}+\nabla \left(p+\frac{|\mathbf{T}|^2}{2}\right)\right)dt=F(\mathbf{u}, \mathfrak{L}_{\mathbf{u}(t)})dt+G_1(\mathbf{u}, \mathbf{T})dW_1,\\
     d\mathbf{T}+(\mathbf{u}\cdot\nabla\mathbf{T}-\mathbf{T}\cdot \nabla \mathbf{u}+A_2^\varepsilon \mathbf{T})dt=G_2(\mathbf{u}, \mathbf{T})dW_2,\\
     \nabla\cdot \mathbf{u}=0,~\nabla \cdot \mathbf{T}=0,\\
  \end{array}
\right.
\end{equation}
with initial data $\mathbf{u}(0)=\mathbf{u}_{0}, \mathbf{T}(0)=\mathbf{T}_{0}$, where $\mathbf{u}$ is the velocity field, $\mathbf{T}$ is the magnetic field, $p$ is pressure, $W_1, W_2$ are two i.i.d. Wiener processes.
Recent literature \cite{lanzhou} has explored homogenization of the 3D Hall magneto-hydrodynamic equations in the absence of random term and $F$. 

 The evolution also occurs in domain $D$
 which is a smooth bounded domain in $\mathbb{R}^N$. Two functions $\mathbf{u}, \mathbf{T}$ satisfy the non-slip boundary condition and Neumann boundary condition on $\partial D$ respectively, thus,
$$\mathbf{u}|_{\partial D}=0,~~ \frac{\partial\mathbf{T}}{\partial \vec{\mathbf{n}}}\big|_{\partial D}=0,$$
$\vec{\mathbf{n}}$ is the outward normal to $\partial D$.

   Using the divergence-free condition, applying the Helmhotz–Hodge projection operator $P$ on both sides of \eqref{2.11}, we rewrite the equation as follows
\begin{equation}\label{2.15*}
\left\{
  \begin{array}{ll}
   d\mathbf{u}+P(\mathbf{u}\cdot\nabla\mathbf{u}-\mathbf{T}\cdot\nabla\mathbf{T}-A^\varepsilon \mathbf{u})dt=PF(\mathbf{u}, \mathfrak{L}_{\mathbf{u}(t)})dt+PG_1(\mathbf{u}, \mathbf{T})dW_1,\\
     d\mathbf{T}+P(\mathbf{u}\cdot\nabla\mathbf{T}-\mathbf{T}\cdot \nabla \mathbf{u}-A^\varepsilon \mathbf{T})dt=PG_2(\mathbf{u}, \mathbf{T})dW_2.
  \end{array}
\right.
\end{equation}

Let us introduce the spaces
\begin{eqnarray*}
&&H:=\left\{(\mathbf{u}, \mathbf{T}) \in (L^2)^2: \nabla\cdot \mathbf{u}=0,~\nabla \cdot \mathbf{T}=0,~ \mathbf{u}|_{\partial D}=0,~~ \frac{\partial\mathbf{T}}{\partial \vec{\mathbf{n}}}\bigg|_{\partial D}=0\right\},\\
&&V:=\left\{(\mathbf{u}, \mathbf{T})\in H: \frac{\partial \mathbf{u}}{\partial x}\in L^2, \frac{\partial \mathbf{T}}{\partial x}\in L^2\right\},
\end{eqnarray*}
with the norm
$$\|\mathbf{u}, \mathbf{T}\|^2_{H}=\|\mathbf{u}\|^2_{L^2}+\|\mathbf{T}\|^2_{L^2}, ~\|\mathbf{u}, \mathbf{T}\|^2_{V}=\|\nabla \mathbf{u}\|^2_{L^2}+\|\nabla \mathbf{T}\|^2_{L^2}.$$

Define by $A^\varepsilon$  the operator
$$
A^\varepsilon X=P\left(
\begin{array}{ccc}
A_1^\varepsilon \mathbf{u}\\
A_2^\varepsilon\mathbf{T}
\end{array}
\right),
$$
 and by $B(X, \widetilde{X})=B_1(X, \widetilde{X})+B_2(X, \widetilde{X})$, where
$$
B_1(X, \widetilde{X})=P\left(
\begin{array}{ccc}
\mathbf{u}\cdot \nabla \widetilde{\mathbf{u}}\\
\mathbf{u}\cdot \nabla \widetilde{\mathbf{T}}
\end{array}
\right),
$$
and
$$
B_2(X, \widetilde{X})=P\left(
\begin{array}{ccc}
-\mathbf{T}\cdot\nabla\widetilde{\mathbf{T}}\\
-\mathbf{T}\cdot \nabla \widetilde{\mathbf{u}}
\end{array}
\right),
$$
for $X=(\mathbf{u}, \mathbf{T})$, $\widetilde{X}=(\widetilde{\mathbf{u}}, \widetilde{\mathbf{T}})$.
Denote by
$$
F^P=P\left(
\begin{array}{ccc}
F\\
0
\end{array}
\right),~~
G^P=P\left(
\begin{array}{ccc}
G_1& 0\\
0& G_2
\end{array}
\right), ~~
W=\left(\begin{array}{ccc}
W_1\\
W_2
\end{array}
\right).
$$
For the operator $B$, we may show that it is well-defined,  and easily check that properties \eqref{2.1*}-\eqref{2.2} are satisfied.

With the above definitions in place we may write system \eqref{2.15*} supplemented by the boundary conditions in the abstract form
\begin{equation*}
dX+A^\varepsilon Xdt+B(X,X)dt=F^P(X, \mathfrak{L}_{X})dt+G^P(X)dW,
\end{equation*}
with initial data $X(0)=X_0$.
\begin{remark} Actually, more concrete  hydrodynamical systems with distribution-dependent force, such as the stochastic Boussinesq equations, stochastic micropolar equations, could be transformed into above abstract models. Therefore, in the followings, we keep the form $B(\mathbf{u}, \mathbf{u})=\mathbf{u}\cdot \nabla\mathbf{u}$ in mind.
\end{remark}

{\bf Example 3: Distribution-dependent stochastic Allen-Cahn equations}

The third example we illustrate is the distribution-dependent stochastic Allen-Cahn equations which is a type of reaction-diffusion equations
\begin{eqnarray}\label{2.9*}
\left\{\begin{array}{ll}
\!\!\!\partial_t \mathbf{u}+A^\varepsilon \mathbf{u}=F(\mathbf{u}, \mathfrak{L}_{\mathbf{u}(t)})-\mathbf{u}^3+\mathbf{u}+G(t, \mathbf{u})\frac{dW}{dt},\\
\!\!\!\mathbf{u}|_{\partial D}=0,\\
\!\!\! \mathbf{u}(0)=\mathbf{u}_0,
\end{array}\right.
\end{eqnarray}
 modelling the effect of thermal perturbations and simulating rare events in infinite-dimensional stochastic systems.
 The homogenization of  deterministic Allen-Cahn  equations has been studied in \cite{Allen2022}. Here $B(\mathbf{u},\mathbf{u})\equiv 0$, $W$ is a cylindrical $Q$-Wiener process, we only need to verify that $F_2(t, \mathbf{u})=-\mathbf{u}^3+\mathbf{u}$ satisfies the conditions we imposed. First, we see $F_2(t, 0)=0$ and \begin{align*}
(F_2(t, \mathbf{u}), \mathbf{u})\leq C(-\|\mathbf{u}\|_{L^{4}}^{4}+\|\mathbf{u}\|_{H}^{2}).
\end{align*}
Moreover, we obtain
 \begin{align*}&(F_2(t, \mathbf{u}_1)-F_2(t, \mathbf{u}_2), \mathbf{u}_1-\mathbf{u}_2)\nonumber\\
& =-(\mathbf{u}^3_1-\mathbf{u}^3_2, \mathbf{u}_1-\mathbf{u}_2)+\|\mathbf{u}_1-\mathbf{u}_2\|_{H}^2\leq \|\mathbf{u}_1-\mathbf{u}_2\|_{H}^2.
\end{align*}
We next show that for any $\varphi\in H$, $\lim\limits_{\varepsilon\rightarrow 0}(F_2(t, \mathbf{u}^\varepsilon), \varphi)=(F_2(t, \mathbf{u}), \varphi)$, $\mathrm{P}$ a.s. as $\mathbf{u}^\varepsilon\rightarrow \mathbf{u}$ in $H$. Choosing $\varphi\in C^1$ firstly, by the H\"{o}lder inequality we get
\begin{align*}
&(F_2(t, \mathbf{u}^\varepsilon), \varphi)-(F_2(t, \mathbf{u}), \varphi)\nonumber\\
&\leq (\mathbf{u}^\varepsilon-\mathbf{u}, \varphi)-((\mathbf{u}^\varepsilon-\mathbf{u})((\mathbf{u}^\varepsilon)^2+\mathbf{u}^\varepsilon\mathbf{u}+\mathbf{u}^2), \varphi)\nonumber\\
&\leq (\mathbf{u}^\varepsilon-\mathbf{u}, \varphi)+C\|\varphi\|_{C^1}(\|\mathbf{u}^\varepsilon\|_{L^{4}}^{2}+\|\mathbf{u}\|_{L^{4}}^{2})\|\mathbf{u}^\varepsilon-\mathbf{u}\|_{H},
\end{align*}
then we obtain the desired convergence by $\mathbf{u}^\varepsilon\rightarrow \mathbf{u}$ in $H$ and the density of $C^1$ in $H$. Hence, our framework could be used to the Allen-Cahn equations \eqref{2.9*}.

\section{The stochastic compactness} In this section, we mainly focus on the proof of the tightness of a family of probability measures.

{\bf The a priori estimates}. We begin with establishing the necessary a priori estimates.

\begin{lemma}\label{lem3.1} The sequence of solutions $\mathbf{u}^\varepsilon$ has the following uniform estimates of $\varepsilon$
\begin{align}\label{3.1}
\mathrm{E}\left(\sup_{t\in [0,T]}\|\mathbf{u}^\varepsilon\|_H^2\right)+\mathrm{E}\int_{0}^{T}\|\nabla \mathbf{u}^\varepsilon\|_{H}^2+\|\mathbf{u}^\varepsilon\|_{L^4}^4dt\leq C,
\end{align}
and for any $p\geq 2$
\begin{align}\label{3.2}
\mathrm{E}\left(\sup_{t\in [0,T]}\|\mathbf{u}^\varepsilon\|_H^p\right)+\mathrm{E}\int_{0}^{T}\|\mathbf{u}^\varepsilon\|_H^{p-2}\|\nabla \mathbf{u}^\varepsilon\|_{H}^2+\|\mathbf{u}^\varepsilon\|_H^{p-2}\|\mathbf{u}^\varepsilon\|_{L^4}^4dt\leq C,
\end{align}
where the positive constant $C$ is independent of  $\varepsilon$.
\end{lemma}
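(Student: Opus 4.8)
The plan is to apply the infinite-dimensional Itô formula to the functional $\x \mapsto \|\x\|_H^p$ evaluated along the solution $\u^\varepsilon$, and then estimate each term using the structural hypotheses on $A^\varepsilon$, $B$, $F$ and $G$, followed by Gronwall's inequality and the Burkholder--Davis--Gundy inequality. First I would establish \eqref{3.1} (the case $p=2$) and then bootstrap to \eqref{3.2}. Testing the equation with $\u^\varepsilon$ and using Itô's formula gives, formally,
\begin{align*}
\|\u^\varepsilon(t)\|_H^2 + 2\int_0^t (A^\varepsilon \u^\varepsilon, \u^\varepsilon)_{V'\times V}\,ds
&= \|\u_0\|_H^2 - 2\int_0^t (B(\u^\varepsilon,\u^\varepsilon),\u^\varepsilon)\,ds
+ 2\int_0^t (F(\u^\varepsilon, \mathfrak{L}_{\u^\varepsilon(s)}), \u^\varepsilon)\,ds \\
&\quad + 2\int_0^t (G(s,\u^\varepsilon)\,dW, \u^\varepsilon)
+ \int_0^t \|G(s,\u^\varepsilon)\|_{L_2(H,H)}^2\,ds.
\end{align*}
The term $(B(\u^\varepsilon,\u^\varepsilon),\u^\varepsilon)$ vanishes by \eqref{2.1*}; the ellipticity \eqref{1.2} gives $2(A^\varepsilon\u^\varepsilon,\u^\varepsilon)_{V'\times V}\ge 2\kappa\|\nabla\u^\varepsilon\|_H^2$ uniformly in $\varepsilon$; the growth condition $(\mathbf{F.1})$ bounds $2(F(\u^\varepsilon,\mathfrak{L}_{\u^\varepsilon(s)}),\u^\varepsilon) \le C(\|\u^\varepsilon\|_H^2 + \mathrm{E}\|\u^\varepsilon(s)\|_H^2) - 2C\|\u^\varepsilon\|_{L^4}^4$; and $(\mathbf{G.2})$ bounds the Itô correction by $C(1+\|\u^\varepsilon\|_H^2)$. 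The crucial observation here is that the distribution-dependent term $\mu(\|\cdot\|_H^2) = \mathrm{E}\|\u^\varepsilon(s)\|_H^2$ becomes an \emph{ordinary} function of $s$ once we take expectations, so it is handled by Gronwall exactly like the $\|\u^\varepsilon\|_H^2$ term; the negative $-\|\u^\varepsilon\|_{L^4}^4$ contribution from $(\mathbf{F.1})$ is what produces the $L^4(D\times(0,T))$ bound.

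The main technical point is the treatment of the stochastic integral and the passage to the supremum-in-time estimate. Taking expectations first kills the martingale term and, combined with Gronwall, yields $\mathrm{E}\|\u^\varepsilon(t)\|_H^2 \le C$ and $\mathrm{E}\int_0^T (\kappa\|\nabla\u^\varepsilon\|_H^2 + C\|\u^\varepsilon\|_{L^4}^4)\,ds \le C$, with $C$ independent of $\varepsilon$ since all constants ($\kappa$ from \eqref{1.2}, the $C$ in \eqref{2.3}, $(\mathbf{F.1})$, $(\mathbf{G.2})$) are $\varepsilon$-free. To get the $\mathrm{E}\sup_t$ bound I would return to the Itô identity before taking expectations, take $\sup_{t\le T}$, and estimate the martingale term via BDG:
\[
\mathrm{E}\sup_{t\le T}\left| \int_0^t (G(s,\u^\varepsilon)\,dW,\u^\varepsilon)\right|
\le c_1 \mathrm{E}\left(\int_0^T \|\u^\varepsilon\|_H^2 \|G(s,\u^\varepsilon)\|_{L_2(H,H)}^2\,ds\right)^{1/2}
\le \tfrac{1}{4}\mathrm{E}\sup_{t\le T}\|\u^\varepsilon\|_H^2 + C\mathrm{E}\int_0^T(1+\|\u^\varepsilon\|_H^2)\,ds,
\]
where the last inequality uses Young's inequality and $(\mathbf{G.2})$; the absorbed term $\tfrac14\mathrm{E}\sup_t\|\u^\varepsilon\|_H^2$ is moved to the left side, and the remaining integral is already controlled by the expectation estimate just obtained. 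This yields \eqref{3.1}.

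For \eqref{3.2} with general $p\ge 2$, I would apply Itô's formula to $\|\u^\varepsilon(t)\|_H^p = (\|\u^\varepsilon(t)\|_H^2)^{p/2}$. The leading drift term becomes $-p\|\u^\varepsilon\|_H^{p-2}(A^\varepsilon\u^\varepsilon,\u^\varepsilon)_{V'\times V} \le -p\kappa\|\u^\varepsilon\|_H^{p-2}\|\nabla\u^\varepsilon\|_H^2$, producing the weighted gradient term on the left of \eqref{3.2}, and $(\mathbf{F.1})$ gives the weighted $-\|\u^\varepsilon\|_H^{p-2}\|\u^\varepsilon\|_{L^4}^4$ term; the extra second-order Itô term $\tfrac{p(p-2)}{2}\|\u^\varepsilon\|_H^{p-4}\|(G(s,\u^\varepsilon))^*\u^\varepsilon\|_H^2 \le C\|\u^\varepsilon\|_H^{p-2}\|G\|_{L_2(H,H)}^2 \le C(\|\u^\varepsilon\|_H^{p-2}+\|\u^\varepsilon\|_H^p)$ is again controlled by $(\mathbf{G.2})$ and Young's inequality. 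The martingale term is handled by BDG exactly as above: its modulus is bounded by $c_{p/2}\mathrm{E}(\int_0^T \|\u^\varepsilon\|_H^{2p-2}\|G\|_{L_2(H,H)}^2\,ds)^{1/2} \le \tfrac14\mathrm{E}\sup_t\|\u^\varepsilon\|_H^p + C\mathrm{E}\int_0^T(1+\|\u^\varepsilon\|_H^p)\,ds$. Taking expectations, Gronwall (in which the distribution term $\mathrm{E}\|\u^\varepsilon(s)\|_H^2 \le (\mathrm{E}\|\u^\varepsilon(s)\|_H^p)^{2/p}$ is absorbed after noting $x^{2/p}\le 1+x$ for $x\ge0$) then closes the estimate with $C$ independent of $\varepsilon$. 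The only genuine obstacle is bookkeeping: ensuring that the negative-definite gradient and $L^4$ contributions are never spent to absorb error terms (we must always absorb into $-p\kappa\|\u^\varepsilon\|_H^{p-2}\|\nabla\u^\varepsilon\|_H^2$ only when a $\|\nabla\u^\varepsilon\|_H^2$ error actually appears — which it does not here, since $B$ contributes nothing and $F,G$ map into $H$), and that every constant invoked is traced back to an $\varepsilon$-independent hypothesis. A standard stopping-time argument localizing $\|\u^\varepsilon(t)\|_H$ justifies the a priori use of BDG and Gronwall rigorously.
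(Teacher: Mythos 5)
Your proposal is correct and follows essentially the same route as the paper: It\^{o}'s formula for $\|\mathbf{u}^\varepsilon\|_H^2$ and then for $(\|\mathbf{u}^\varepsilon\|_H^2)^{p/2}$, cancellation of $B$ via \eqref{2.1*}, the ellipticity \eqref{1.2}, assumptions $(\mathbf{F.1})$ and $(\mathbf{G.2})$, the Burkholder--Davis--Gundy inequality with Young absorption of $\tfrac14\mathrm{E}\sup_t\|\mathbf{u}^\varepsilon\|_H^p$, and Gronwall. Your explicit remarks on treating $\mathrm{E}\|\mathbf{u}^\varepsilon\|_H^2$ as a deterministic function in Gronwall and on the stopping-time localization are sound refinements of the same argument, not a different approach.
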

\begin{proof}
Using the It\^{o} formula to $\|\mathbf{u}^\varepsilon\|_{H}^2$, by \eqref{2.1*} we obtain
\begin{align}\label{a2.14}
\frac{1}{2}d\|\mathbf{u}^\varepsilon\|_H^2+(A^\varepsilon \mathbf{u}^\varepsilon, \mathbf{u}^\varepsilon)_{V'\times V}dt=(F(\mathbf{u}^\varepsilon, \mathfrak{L}_{\mathbf{u}^\varepsilon(t)}), \mathbf{u}^\varepsilon)dt+ (G(t, \mathbf{u}^\varepsilon)dW, \mathbf{u}^\varepsilon)+\frac{1}{2}\|G(t, \mathbf{u}^\varepsilon)\|_{L_2(H,H)}^2 dt.
\end{align}
The uniformly elliptic condition \eqref{1.2} leads to
\begin{align}\label{1.6}
(A^\varepsilon \mathbf{u}^\varepsilon, \mathbf{u}^\varepsilon)_{V'\times V}\geq \kappa\|\nabla \mathbf{u}^\varepsilon\|_{H}^2.
\end{align}
Using assumptions $(\mathbf{F.1})$ and $(\mathbf{G.1})$, we see
\begin{align}
(F(\mathbf{u}^\varepsilon, \mathfrak{L}_{\mathbf{u}^\varepsilon(t)}), \mathbf{u}^\varepsilon)\leq C(\|\mathbf{u}^\varepsilon\|_H^2+\mathrm{E}\|\mathbf{u}^\varepsilon\|_H^2-\|\mathbf{u}^\varepsilon\|_{L^4}^4),
\end{align}
and
\begin{align}\label{1.8}
\frac{1}{2}\|G(t, \mathbf{u}^\varepsilon)\|_{L_2(H,H)}^2 \leq C(1+\|\mathbf{u}^\varepsilon\|_H^{2}).
\end{align}
Using the Burkholder-Davis-Gundy inequality, the Young inequality and \eqref{1.8}, we get for any $p\geq 1$
\begin{align}\label{1.9}
&\mathrm{E}\left(\sup_{t\in [0,T]}\left|\int_{0}^{t}(G(r, \mathbf{u}^\varepsilon)dW, \mathbf{u}^\varepsilon)\right|^p\right)\nonumber\\
&\leq C\mathrm{E}\left|\int_{0}^{T}\sum_{k\geq 1}(G(t, \mathbf{u}^\varepsilon)\mathbf{e}_k, \mathbf{u}^\varepsilon)^2dt\right|^\frac{p}{2}\nonumber\\
&\leq C\mathrm{E}\left|\int_{0}^{T}\sum_{k\geq 1}\|G(t, \mathbf{u}^\varepsilon)\mathbf{e}_k\|_{H}^2 \|\mathbf{u}^\varepsilon\|_{H}^2dt\right|^\frac{p}{2}\nonumber\\
&\leq \frac{1}{4}\mathrm{E}\left(\sup_{t\in [0,T]}\|\mathbf{u}^\varepsilon\|_H^{2p}\right)+C\mathrm{E}\left(\int_{0}^{T}\| G(t, \mathbf{u}^\varepsilon)\|^2_{L_2(H,H)}dt\right)^p\nonumber\\
&\leq \frac{1}{4}\mathrm{E}\left(\sup_{t\in [0,T]}\|\mathbf{u}^\varepsilon\|_H^{2p}\right)+C\mathrm{E}\left(\int_{0}^{T}1+\|\mathbf{u}^\varepsilon\|_H^{2}dt\right)^p.
\end{align}

Combining \eqref{a2.14}-\eqref{1.8}, we have
\begin{align}\label{1.10}
\frac{1}{2}d\|\mathbf{u}^\varepsilon\|_H^2+\kappa\|\nabla \mathbf{u}^\varepsilon\|_{H}^2dt+C\|\mathbf{u}^\varepsilon\|_{L^4}^4dt\leq  C(1+\|\mathbf{u}^\varepsilon\|_H^{2}+\mathrm{E}\|\mathbf{u}^\varepsilon\|_H^2)dt+(G(t, \mathbf{u}^\varepsilon)dW, \mathbf{u}^\varepsilon).
\end{align}
Integrating of time, then taking supremum of $t$ over $[0,T]$ and expectation in \eqref{1.10}, by \eqref{1.9} with $p=1$ we see
\begin{align*}
&\mathrm{E}\left(\sup_{t\in [0,T]}\|\mathbf{u}^\varepsilon\|_H^2\right)+C\mathrm{E}\int_{0}^{T}\|\nabla \mathbf{u}^\varepsilon\|_{H}^2+\|\mathbf{u}^\varepsilon\|_{L^4}^4dt\nonumber\\
&\leq \|\mathbf{u}_0\|_H^2+ C\mathrm{E}\int_{0}^{T}(1+\|\mathbf{u}^\varepsilon\|_H^{2}+\mathrm{E}\|\mathbf{u}^\varepsilon\|_H^2)dt
+C\mathrm{E}\int_{0}^{T}1+\|\mathbf{u}^\varepsilon\|_H^{2}dt.
\end{align*}
The Gronwall lemma yields estimate \eqref{3.1}. Furthermore, for any $p\geq 2$, using the It\^{o} formula to $(\|\mathbf{u}^\varepsilon\|_{H}^2)^\frac{p}{2}$, by \eqref{a2.14} we have
\begin{align}\label{a1}
&\frac{p}{2}d\|\mathbf{u}^\varepsilon\|_H^p+p\|\mathbf{u}^\varepsilon\|_H^{p-2}(A^\varepsilon \mathbf{u}^\varepsilon, \mathbf{u}^\varepsilon)_{V'\times V}dt\nonumber\\
&=p\|\mathbf{u}^\varepsilon\|_H^{p-2}(F(\mathbf{u}^\varepsilon, \mathfrak{L}_{\mathbf{u}^\varepsilon(t)}), \mathbf{u}^\varepsilon)dt+ p\|\mathbf{u}^\varepsilon\|_H^{p-2}(G(t, \mathbf{u}^\varepsilon)dW, \mathbf{u}^\varepsilon)\nonumber\\
&\quad+\frac{p}{2}\|\mathbf{u}^\varepsilon\|_H^{p-2}\|G(t, \mathbf{u}^\varepsilon)\|_{L_2(H,H)}^2 dt+\frac{p(p-2)}{4}\|\mathbf{u}^\varepsilon\|_H^{p-4}(G(t, \mathbf{u}^\varepsilon), \mathbf{u}^\varepsilon)^2dt.
\end{align}
Similar to \eqref{1.6}-\eqref{1.8}, we obtain
\begin{align}\label{a1*}
&\frac{p}{2}d\|\mathbf{u}^\varepsilon\|_H^p+\kappa p\|\nabla \mathbf{u}^\varepsilon\|_{H}^2\|\mathbf{u}^\varepsilon\|_H^{p-2} dt+C\|\mathbf{u}^\varepsilon\|_H^{p-2}\|\mathbf{u}^\varepsilon\|_{L^4}^4dt\nonumber\\&\leq Cp\|\mathbf{u}^\varepsilon\|_H^{p-2}(1+\|\mathbf{u}^\varepsilon\|_H^2+\mathrm{E}\|\mathbf{u}^\varepsilon\|_H^2)dt+ p\|\mathbf{u}^\varepsilon\|_H^{p-2}(G(t, \mathbf{u}^\varepsilon)dW, \mathbf{u}^\varepsilon)\nonumber\\
&\quad+C(p)(1+\|\mathbf{u}^\varepsilon\|_H^{p})dt.
\end{align}
As \eqref{1.9}, we also have
\begin{align}\label{1.9**}
&\mathrm{E}\left(\sup_{t\in [0,T]}\left|\int_{0}^{t}p\|\mathbf{u}^\varepsilon\|_H^{p-2}(G(r, \mathbf{u}^\varepsilon)dW, \mathbf{u}^\varepsilon)\right|\right)\nonumber\\
& \leq \frac{p}{4}\mathrm{E}\left(\sup_{t\in [0,T]}\|\mathbf{u}^\varepsilon\|_H^{p}\right)+C(p)\mathrm{E}\int_{0}^{T}1+\|\mathbf{u}^\varepsilon\|_H^{p}dt.
\end{align}
Integrating of time, then taking supremum of $t$ over $[0,T]$ and expectation in \eqref{a1}, by \eqref{a1*} and \eqref{1.9**} we obtain
\begin{align}
&\mathrm{E}\left(\sup_{t\in [0,T]}\|\mathbf{u}^\varepsilon\|_H^p\right)+C\mathrm{E}\int_{0}^{T}\|\mathbf{u}^\varepsilon\|_H^{p-2}\|\nabla \mathbf{u}^\varepsilon\|_{H}^2+\|\mathbf{u}^\varepsilon\|_H^{p-2}\|\mathbf{u}^\varepsilon\|_{L^4}^4dt\nonumber\\
&\leq  \|\mathbf{u}_0\|_H^p+C(p)\mathrm{E}\int_{0}^{T}\|\mathbf{u}^\varepsilon\|_H^{p-2}(1+\|\mathbf{u}^\varepsilon\|_H^{2}+\mathrm{E}\|\mathbf{u}^\varepsilon\|_H^2)dt
+C(p)\mathrm{E}\int_{0}^{T}1+\|\mathbf{u}^\varepsilon\|_H^{p}dt\nonumber\\
&\leq \|\mathbf{u}_0\|_H^p+C(p)\mathrm{E}\int_{0}^{T}1+\|\mathbf{u}^\varepsilon\|_H^{p}dt,\nonumber
\end{align}
where we used the Young inequality in second inequality. Again, the Gronwall lemma could be used to obtain \eqref{3.2}.
\end{proof}

We next concern the time regularity of $\mathbf{u}^\varepsilon$ for the further compactness argument.
\begin{lemma}\label{lem3.2}Let $\mathbf{u}^\varepsilon$ be the solutions of system \eqref{Equ1.1}-\eqref{i1},  then there exists constant $C>0$ which is independent of $\varepsilon$ such that the time increment estimate satisfies
\begin{align*}
\mathrm{E}\left(\|\mathbf{u}^\varepsilon(t)-\mathbf{u}^\varepsilon(s)\|_{H^{-1, \frac{N}{N-1}}}^2\right)\leq C|t-s|,
\end{align*}
for any $t,s\in [0,T]$.
\end{lemma}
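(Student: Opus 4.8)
\emph{Proof strategy.} The plan is to read the increment off from the integral form of the equation provided by the well-posedness result and to estimate its four constituents in $H^{-1,\frac{N}{N-1}}$, which is the natural ambient space here since $H\hookrightarrow V'\hookrightarrow H^{-1,\frac{N}{N-1}}$ for $N=2,3$ (so that the three Bochner integrals, living in $H^{-1,\frac{N}{N-1}}$ by \eqref{2.3} and the boundedness of $A^\varepsilon\colon V\to V'$, and the $H$-valued stochastic integral, all take values there). Fixing $0\le s\le t\le T$ and subtracting the weak formulation at the times $s$ and $t$, one gets the identity in $H^{-1,\frac{N}{N-1}}$
\begin{align*}
\mathbf{u}^\varepsilon(t)-\mathbf{u}^\varepsilon(s)&=-\int_s^t A^\varepsilon\mathbf{u}^\varepsilon\,dr-\int_s^t B(\mathbf{u}^\varepsilon,\mathbf{u}^\varepsilon)\,dr\\
&\quad+\int_s^t F(\mathbf{u}^\varepsilon,\mathfrak{L}_{\mathbf{u}^\varepsilon(r)})\,dr+\int_s^t G(r,\mathbf{u}^\varepsilon)\,dW=:\sum_{i=1}^4 J_i ,
\end{align*}
so that $\mathrm{E}\big(\|\mathbf{u}^\varepsilon(t)-\mathbf{u}^\varepsilon(s)\|_{H^{-1,\frac{N}{N-1}}}^2\big)\le 4\sum_{i=1}^4\mathrm{E}\big(\|J_i\|_{H^{-1,\frac{N}{N-1}}}^2\big)$, and it suffices to bound each summand by $C|t-s|$ with $C$ independent of $\varepsilon$.

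For the three deterministic integrals $J_1,J_2,J_3$ I would apply Cauchy--Schwarz in time, $\|\int_s^t\Phi\,dr\|^2\le|t-s|\int_0^T\|\Phi\|^2\,dr$, and then invoke the a priori bounds of Lemma~\ref{lem3.1}. For $J_1$: since the $a_{i,j}$ are bounded uniformly in $\varepsilon$, $A^\varepsilon\colon V\to V'$ is bounded uniformly in $\varepsilon$, hence $\|A^\varepsilon\mathbf{u}^\varepsilon\|_{H^{-1,\frac{N}{N-1}}}^2\le C\|\mathbf{u}^\varepsilon\|_{V}^2$ and $\mathrm{E}\|J_1\|^2\le C|t-s|\,\mathrm{E}\int_0^T\|\nabla\mathbf{u}^\varepsilon\|_H^2\,dr\le C|t-s|$ by \eqref{3.1}. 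For $J_2$: the product estimate \eqref{2.3} gives $\|B(\mathbf{u}^\varepsilon,\mathbf{u}^\varepsilon)\|_{H^{-1,\frac{N}{N-1}}}^2\le C\|\mathbf{u}^\varepsilon\|_H^2\|\mathbf{u}^\varepsilon\|_V^2$, whence $\mathrm{E}\|J_2\|^2\le C|t-s|\,\mathrm{E}\int_0^T\|\mathbf{u}^\varepsilon\|_H^2\|\nabla\mathbf{u}^\varepsilon\|_H^2\,dr\le C|t-s|$, the last step being exactly \eqref{3.2} with $p=4$. For $J_3$: I would bound $\|F(\mathbf{u}^\varepsilon,\mathfrak{L}_{\mathbf{u}^\varepsilon(r)})\|_{H^{-1,\frac{N}{N-1}}}^2$ by $C\big(1+\|\mathbf{u}^\varepsilon\|_H^2+\|\mathbf{u}^\varepsilon\|_{L^4}^4+\mathrm{E}\|\mathbf{u}^\varepsilon\|_H^2\big)$ using the structure of $F$ — the drag part is linear in $\mathbf{u}^\varepsilon$ and in $\mathfrak{L}_{\mathbf{u}^\varepsilon(r)}$, while for the cubic part in the Allen--Cahn case one uses $\|\mathbf{u}^3\|_{L^{4/3}}=\|\mathbf{u}\|_{L^4}^3$ together with the embedding $L^{4/3}\hookrightarrow H^{-1,\frac{N}{N-1}}$ (valid for $N\le 3$) — and then conclude $\mathrm{E}\|J_3\|^2\le C|t-s|$ from \eqref{3.1}.

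For the stochastic term $J_4$ no Cauchy--Schwarz loss occurs: by the It\^{o} isometry (the $p=2$ Burkholder--Davis--Gundy inequality) and the embedding $H\hookrightarrow H^{-1,\frac{N}{N-1}}$,
\begin{align*}
\mathrm{E}\|J_4\|_{H^{-1,\frac{N}{N-1}}}^2\le C\,\mathrm{E}\int_s^t\|G(r,\mathbf{u}^\varepsilon)\|_{L_2(H,H)}^2\,dr\le C\,\mathrm{E}\int_s^t\big(1+\|\mathbf{u}^\varepsilon\|_H^2\big)\,dr\le C|t-s|
\end{align*}
by $(\mathbf{G.2})$ and \eqref{3.1}. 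Summing the four estimates yields the assertion.

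The routine parts are the Cauchy--Schwarz/It\^{o}-isometry manipulations together with Lemma~\ref{lem3.1}; the genuine content is that each nonlinear term must be estimated in the dual norm $H^{-1,\frac{N}{N-1}}$, the convection term via \eqref{2.3} coupled with the fourth-moment bound \eqref{3.2} (with $p=4$) rather than the basic energy estimate \eqref{3.1}. The point I expect to require the most care is the cubic part of $F$ in the Allen--Cahn setting, where producing the full power $|t-s|^1$ rests on controlling $\|\mathbf{u}^\varepsilon\|_{L^4}$ through the $L^4(D\times(0,T))$-regularity of $\mathbf{u}^\varepsilon$ supplied by the well-posedness result (and, if needed, a higher-moment refinement of that bound).
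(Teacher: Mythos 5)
Your proof follows essentially the same route as the paper: subtract the weak formulations to get the increment identity, apply Cauchy--Schwarz in time to the three Bochner integrals and bound them in the dual norm $H^{-1,\frac{N}{N-1}}$ via the boundedness of $A^\varepsilon$, \eqref{2.3} and Lemma \ref{lem3.1} (the $B$-term indeed requiring the fourth-moment bound \eqref{3.2} with $p=4$), and treat the stochastic integral separately. If anything your write-up is the more careful one: the paper asserts that the martingale term ``vanishes after taking expectation,'' which is true of its first moment but not of the squared norm actually being estimated, whereas your It\^o-isometry bound $\mathrm{E}\|J_4\|^2\le C|t-s|$ is what is really needed; likewise your structural bound on $F$ (linear drag part plus the cubic part via $L^{4/3}\hookrightarrow H^{-1,\frac{N}{N-1}}$) is better justified than the paper's direct appeal to $(\mathbf{F.1})$.
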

\begin{proof} Note that, $\mathbf{u}^\varepsilon(t)-\mathbf{u}^\varepsilon(s)$ satisfies
\begin{align}\label{3.15}
\mathbf{u}^\varepsilon(t)-\mathbf{u}^\varepsilon(s)=-\int_{s}^{t}A^\varepsilon\mathbf{u}^\varepsilon dr-\int_{s}^{t}B(\mathbf{u}^\varepsilon, \mathbf{u}^\varepsilon)dr+\int_{s}^{t}F(\mathbf{u}^\varepsilon, \mathfrak{L}_{\mathbf{u}^\varepsilon(r)})dr+\int_{s}^{t}G(r, \mathbf{u}^\varepsilon)dW.
\end{align}
Taking the $H^{-1,\frac{N}{N-1}}$-norm on both sides of \eqref{3.15} and then expectation, by the H\"{o}lder inequality and $H^{-1}\hookrightarrow H^{-1,\frac{N}{N-1}}$, we arrive at
\begin{align}\label{1.18}
&\mathrm{E}\left(\|\mathbf{u}(t)-\mathbf{u}(s)\|_{H^{-1,\frac{N}{N-1}}}^2\right)\nonumber\\
&\leq C\mathrm{E}\left(\int_{s}^{t}\|A^\varepsilon\mathbf{u}^\varepsilon\|_{H^{-1}} dr+\int_{s}^{t}\|B(\mathbf{u}^\varepsilon, \mathbf{u}^\varepsilon)\|_{H^{-1,\frac{N}{N-1}}}dr+\int_{s}^{t}\|F(\mathbf{u}^\varepsilon, \mathfrak{L}_{\mathbf{u}^\varepsilon(r)})\|_{H^{-1}}dr\right)^2\nonumber\\
&\leq C|t-s|\left(\mathrm{E}\int_{s}^{t}\|A^\varepsilon\mathbf{u}^\varepsilon\|_{H^{-1}}^2 dr+\mathrm{E}\int_{s}^{t}\|B(\mathbf{u}^\varepsilon, \mathbf{u}^\varepsilon)\|_{H^{-1, \frac{N}{N-1}}}^2 dr+\mathrm{E}\int_{s}^{t}\|F(\mathbf{u}^\varepsilon, \mathfrak{L}_{\mathbf{u}^\varepsilon(r)})\|_{H^{-1}}^2dr\right).
\end{align}
Note that using Lemma \ref{lem3.1}, we could verify that the last term is a square integrable martingale, hence this term vanishes after taking expectation. Using assumption of coefficient $a^\varepsilon_{i,j}\in L^\infty$, we can control the linear term
\begin{align}
\mathrm{E}\int_{s}^{t}\|A^\varepsilon\mathbf{u}^\varepsilon\|_{H^{-1}}^2 dr&\leq \mathrm{E}\int_{s}^{t}\left\|\sum_{i,j=1}^N\left(a^\varepsilon_{i,j}\frac{\partial \mathbf{u}^\varepsilon}{\partial x_j}\right)\right\|_{H}^2 dr\nonumber\\
&\leq \left\|\sum_{i,j=1}^Na^\varepsilon_{i,j}\right\|^2_{L^\infty}\mathrm{E}\int_{s}^{t}\left\|\sum_{j=1}^N\frac{\partial \mathbf{u}^\varepsilon}{\partial x_j}\right\|_{H}^2 dr\nonumber\\
&\leq C\mathrm{E}\int_{0}^{T}\|\nabla \mathbf{u}^\varepsilon\|_{H}^2dt.
\end{align}
From assumption \eqref{2.3},  we obtain
\begin{align}
\mathrm{E}\int_{s}^{t}\|B(\mathbf{u}^\varepsilon, \mathbf{u}^\varepsilon)\|_{H^{-1, \frac{N}{N-1}}}^2 dr\leq C\mathrm{E}\int_{0}^{T}\|\mathbf{u}^\varepsilon\|_{H}^2\|\mathbf{u}^\varepsilon\|_{V}^2dt.
\end{align}
Applying assumption ({\bf F.1}) again, we find
\begin{align}\label{1.21}
\mathrm{E}\int_{s}^{t}\|F(\mathbf{u}^\varepsilon, \mathfrak{L}_{\mathbf{u}^\varepsilon(r)})\|_{H^{-1}}^2dr&=\mathrm{E}\int_{s}^{t}\sup_{\varphi\in V, \|\varphi\|_{V}=1}(F(\mathbf{u}^\varepsilon, \mathfrak{L}_{\mathbf{u}^\varepsilon(r)}), \varphi)^2dr\nonumber\\
&\leq C\mathrm{E}\int_{s}^{t}\|\varphi\|_{V}^2(\|\mathbf{u}^\varepsilon\|_{H}^2+ \mathrm{E}\|\mathbf{u}^\varepsilon\|_{H}^2)dr\nonumber\\
&\leq C\mathrm{E}\int_{0}^{T}\|\mathbf{u}^\varepsilon\|_{H}^2dt.
\end{align}
The conclusion follows from \eqref{1.18}-\eqref{1.21} and Lemma \ref{lem3.1}.
\end{proof}

{\bf Tightness}. We consider the spaces
$$\mathscr{X}=C([0,T]; V')\cap L^2(0,T;H),$$
and
$$X=C([0,T]; V')\cap L^2(0,T;H)\cap L^2_w(0,T; V)\cap L_w^4(D\times (0,T)).$$
Denote by $\mathfrak{L}_{\mathbf{u}^\varepsilon}$ the law of $\mathbf{u}^\varepsilon$, we next show that the family of measures $\mathfrak{L}_{\mathbf{u}^\varepsilon}$ is tight in $X$, also in $\mathscr{X}$.

Indeed, for any $R>0$, define the set
$$\mathscr{B}_R:=\left\{\mathbf{u}^\varepsilon: \sup_{t\in [0,T]}\|\mathbf{u}^\varepsilon\|_{H}+\int_{0}^{T}\|\mathbf{u}^\varepsilon\|_{V}^2+
\|\mathbf{u}^\varepsilon\|_{L^4}^4dt+\int_{0}^{T}\left\|\frac{d\mathbf{u}^\varepsilon}{dt}\right\|_{H^{-1, \frac{N}{N-1}}}^2dt\leq R\right\}.$$
According to the Aubin-Lions lemma (see appendix, Lemma \ref{Simon1}) and the Banach-Alaoglu theorem, the set $\mathscr{B}_R$ is relative compact in $X$. By Lemmas \ref{lem3.1}, \ref{lem3.2} and the Chebyschev inequality, we see
$$
\mathrm{P}(\mathbf{u}^\varepsilon\in \mathscr{B}_R^c)$$
$$
\leq \frac{1}{R}\mathrm{E}\left(\sup_{t\in [0,T]}\|\mathbf{u}^\varepsilon\|_{H}+\int_{0}^{T}\|\mathbf{u}^\varepsilon\|_{V}^2+\|\mathbf{u}^\varepsilon\|_{L^4}^4dt
+\int_{0}^{T}\left\|\frac{d\mathbf{u}^\varepsilon}{d t}\right\|_{H^{-1,\frac{N}{N-1}}}^2dt\right)\leq \frac{C}{R},
$$
where $C>0$ is independent of $R, \varepsilon$, which implies the tightness of the family of measures $\mathfrak{L}_{\mathbf{u}^\varepsilon}$. Furthermore, we have the law $\mathfrak{L}_{\mathbf{u}^\varepsilon}\times \mathfrak{L}_W$ is tight in $X\times C([0,T]; H_0)$.

The Skorokhod-Jakubowski representative theorem will be used to represent a weakly convergent
probability measure sequence on a topology space as the distribution of a pointwise convergent
random variable sequence.

\begin{proposition}\label{pro3.1}\cite{brz} If $E$ is a topology space, and  there exists a sequence
of continuous functions $f_n: E\rightarrow R$ that separates points of $E$, denote by $\mathcal{B}$ the $\sigma$-algebra generated by $f_n$, then, it holds:

i.  every compact subset of $E$ is metrizable;

ii. if the set of probability measures $\{\mu_n\}_{n\geq 1}$ on $(E, \mathcal{B})$ is tight, then there exist a probability space $(\Omega, \mathcal{F}, \mathrm{P})$ and a sequence of random variables $\mathbf{u}_n, \mathbf{u}$ such that their laws are $\mu_n$, $\mu$ and $\mathbf{u}_n\rightarrow \mathbf{u}$, $\mathrm{P}$ a.s. as $n\rightarrow \infty$ in $E$.
\end{proposition}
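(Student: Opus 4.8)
The plan is to recognize the statement as the Jakubowski version of the Skorokhod representation theorem and to transfer everything to a Polish space by means of the separating sequence. First I would replace each $f_n$ by the bounded continuous function $g_n:=\arctan f_n$, which still separates points, and form the continuous injection $\iota\colon E\to\mathcal{K}:=\prod_{n\ge1}[-\tfrac{\pi}{2},\tfrac{\pi}{2}]$, $\iota(x)=(g_n(x))_{n\ge1}$, into the compact metrizable (hence Polish) space $\mathcal{K}$; injectivity is exactly the separation hypothesis, and $\mathcal{B}=\sigma(\iota)$ because $g_n\circ\iota^{-1}$ is the $n$-th coordinate projection. All the topological and measure-theoretic information about $E$ that we need is thereby encoded on $\mathcal{K}$.

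For part (i), given a compact $K\subseteq E$ I would note that $\iota|_K\colon K\to\iota(K)\subseteq\mathcal{K}$ is a continuous bijection from a compact space onto a Hausdorff (metric) space, hence a homeomorphism; since $\iota(K)$ is a subspace of the metrizable space $\mathcal{K}$ it is metrizable, and therefore so is $K$.

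For part (ii), push the laws forward: $\nu_n:=\iota_{*}\mu_n\in\mathscr{P}(\mathcal{K})$ are automatically tight because $\mathcal{K}$ is compact, so by Prokhorov's theorem a subsequence satisfies $\nu_{n_j}\rightharpoonup\nu$; the associated limit $\mu$ of $\mu_{n_j}$ is the Borel probability measure on $(E,\mathcal{B})$ with $\mu(\iota^{-1}(C))=\nu(C)$ (well defined once one knows $\nu$ is carried by $\mathrm{range}\,\iota$, which the tightness argument below provides). Apply the classical Skorokhod representation theorem on the Polish space $\mathcal{K}$ to get a probability space and $\mathcal{K}$-valued random variables $Z_j\sim\nu_{n_j}$, $Z\sim\nu$ with $Z_j\to Z$ almost surely in $\mathcal{K}$. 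Now use tightness of $\{\mu_n\}$: choose compacts $K_1\subseteq K_2\subseteq\cdots\subseteq E$ with $\inf_n\mu_n(K_l)\ge1-1/l$; each $\iota(K_l)$ is compact, hence closed, in $\mathcal{K}$, so $\nu_{n_j}(\iota(K_l))\ge1-1/l$ and the Portmanteau theorem yields $\nu(\iota(K_l))\ge1-1/l$. With $E_\infty:=\bigcup_l K_l$ this gives $\nu(\iota(E_\infty))=1=\nu_{n_j}(\iota(E_\infty))$, so $Z,Z_j\in\iota(E_\infty)$ a.s. Since $\iota$ is injective, $\iota|_{E_\infty}$ has an inverse $\psi\colon\iota(E_\infty)\to E_\infty$, which is $\mathcal{B}$-measurable (trivially, as $\mathcal{B}$ is pulled back from $\mathcal{K}$) and continuous on each compact piece $\iota(K_l)$ by part (i). Setting $\mathbf{u}_j:=\psi(Z_j)$, $\mathbf{u}:=\psi(Z)$, injectivity of $\iota$ together with the full mass on $\iota(E_\infty)$ gives $\mathrm{Law}(\mathbf{u}_j)=\mu_{n_j}$ and $\mathrm{Law}(\mathbf{u})=\mu$ on $(E,\mathcal{B})$.

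The remaining and \textbf{main} obstacle is to upgrade the almost sure convergence $Z_j\to Z$ in $\mathcal{K}$ to almost sure convergence $\mathbf{u}_j\to\mathbf{u}$ in the original topology of $E$: the inverse $\psi$ is only piecewise continuous, and a sequence in $\iota(E_\infty)$ that converges there need not eventually lie in one compact piece $\iota(K_l)$, so continuity of $\psi$ cannot be invoked directly. The way around this — which is the technical heart of Jakubowski's argument, see \cite{brz} — is to build the representing variables $Z_j,Z$ not as a black-box output of the classical theorem but in a coordinated way: roughly, from a single uniform random variable through nested quantile-type inverses adapted to the $K_l$, exploiting the uniform lower bounds $\nu_{n_j}(\iota(K_l))\ge1-1/l$, so that almost surely there is a random index $l$ with $Z_j\in\iota(K_l)$ for all $j$ and $Z\in\iota(K_l)$. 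On $\iota(K_l)$ the map $\psi$ is a genuine homeomorphism onto $K_l$, so convergence in $\mathcal{K}$ transfers to convergence in the topology of $K_l\subseteq E$, which is the assertion (the convergence being along the extracted subsequence, as is customary in this setting). This ``trapping in a common compact set'' is precisely the step where the possible non-metrizability of $E$ is felt and where I expect the real work to lie; the remaining steps are routine measure theory.
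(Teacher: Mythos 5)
The paper gives no proof of Proposition \ref{pro3.1} at all --- it is quoted verbatim from \cite{brz}, which in turn relies on Jakubowski's original almost-sure Skorokhod representation theorem for non-metric spaces --- so there is no in-paper argument to compare yours against. Judged on its own terms, your reconstruction follows exactly the standard route: the reduction via $g_n=\arctan f_n$ to a continuous injection $\iota$ into the Hilbert cube, the identification $\mathcal{B}=\sigma(\iota)$, the proof of (i) as ``continuous bijection from a compact space to a Hausdorff space is a homeomorphism,'' and the measure-theoretic bookkeeping in (ii) (well-definedness of $\mu$ via $\nu(\iota(E_\infty))=1$, the Portmanteau estimate $\nu(\iota(K_l))\ge 1-1/l$, measurability of $\psi$ and the identification of the pushed-back laws) are all correct and complete. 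You are also right to flag that the conclusion holds only along an extracted subsequence; the paper's statement omits this, but that is an imprecision of the paper, not of your argument.

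The one substantive issue is the step you yourself single out: upgrading $Z_j\to Z$ a.s.\ in $\mathcal{K}$ to $\mathbf{u}_j\to\mathbf{u}$ a.s.\ in $E$. Your diagnosis of why the black-box classical Skorokhod theorem fails here is exactly right --- $\psi$ is a homeomorphism only on each $\iota(K_l)$, and a convergent sequence in $\iota(E_\infty)$ need not be trapped in a single $\iota(K_l)$ --- and your description of the remedy (a coordinated construction of the representing variables from a single uniform variable, using the uniform bounds $\nu_{n_j}(\iota(K_l))\ge 1-1/l$ so that a.s.\ some random $l$ captures the whole sequence) is the correct shape of Jakubowski's argument. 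But you do not carry out that construction; you defer it to the reference. So as a self-contained proof the proposal has a gap at precisely the point you identify as the theorem's technical heart, and what you have written is an accurate, well-organized reduction of the proposition to that one lemma rather than a full proof of it. Since the paper itself treats the whole proposition as a citation, this level of detail already exceeds what the paper provides.
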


Note that  since $C([0,T]; V')\cap L^2(0,T;H)$ is a Polish space,  there exists a countable set of continuous real-valued
functions separating points. For the space $L^2_w(0,T; V)$, since
$ L ^2(0,T;V)$
is a separable Hilbert space,
any countable dense subset
of $ \left ( L ^2(0,T;V)
\right )^*$
forms a sequence
of continuous
functions
on $ L _w^2(0,T;V))$
that separate points.
Similarly, we could show that the space $L_w^4(D\times (0,T))$ also satisfies the condition. With all conditions verified of Proposition \ref{pro3.1}, we could find that there exist a new probability space $(\widetilde{\Omega}, \widetilde{\mathcal{F}}, \widetilde{\mathrm{P}})$ and a sequence of random variables $\widetilde{\mathbf{u}}^\varepsilon, \widetilde{W}^\varepsilon, \widetilde{\mathbf{u}}, \widetilde{W}$ such that their laws are $\mathfrak{L}_{\mathbf{u}^\varepsilon}$, $\mathfrak{L}_W$, $\mathfrak{L}_{\mathbf{u}}$ and $$\widetilde{\mathbf{u}}^\varepsilon\rightarrow \widetilde{\mathbf{u}},~ {\rm in}~ X,$$
and
$$\mathfrak{L}_{\mathbf{u}^\varepsilon}\rightarrow \mathfrak{L}_{\mathbf{u}},~{\rm in}~\mathscr{P}(\mathscr{X}),$$
and
$$\widetilde{W}^\varepsilon\rightarrow \widetilde{W}, ~{\rm in} ~C([0,T]; H_0),$$
 $\widetilde{\mathrm{P}}$ a.s. as $\varepsilon\rightarrow 0$.

 Since the laws of $\widetilde{\mathbf{u}}^\varepsilon$ and $\mathbf{u}^\varepsilon$ coincide, they share the same estimates and the equations, thus we have
 \begin{align}\label{1.23**}
 \mathrm{E}^{\widetilde{\mathrm{P}}}\left(\sup_{t\in [0,T]}\|\widetilde{\mathbf{u}}^\varepsilon\|_H^p\right)+\mathrm{E}^{\widetilde{\mathrm{P}}}\int_{0}^{T}\|\widetilde{\mathbf{u}}^\varepsilon\|_H^{p-2}\|\nabla \widetilde{ \mathbf{u}}^\varepsilon\|_{H}^2+\|\widetilde{\mathbf{u}}^\varepsilon\|_H^{p-2}\|\widetilde{\mathbf{u}}^\varepsilon\|_{L^4}^4dt\leq C,
 \end{align}
for any $p\geq 2$, $ \mathrm{E}^{\widetilde{\mathrm{P}}}$  is the expectation with respect to $\widetilde{\mathrm{P}}$, the constant $C$ is independent of $\varepsilon$, and on the new probability space, $\widetilde{\mathrm{P}}$ a.s.
 \begin{equation*}
 \partial_t\widetilde{ \mathbf{u}}^\varepsilon+A^\varepsilon \widetilde{\mathbf{u}}^\varepsilon+B(\widetilde{\mathbf{u}}^\varepsilon, \widetilde{\mathbf{u}}^\varepsilon)=F(\widetilde{\mathbf{u}}^\varepsilon, \mathfrak{L}_{\widetilde{\mathbf{u}}^\varepsilon(t)})+G(t, \widetilde{\mathbf{u}}^\varepsilon)\frac{d\widetilde{W}^\varepsilon}{dt},
 \end{equation*}
in the weak sense, and with the initial law $\mathbb{P}=\delta_{\mathbf{u}_0^{-1}}$.

\section{Homogenization problem}
In this section, we prove the homogenization result. We begin with introducing some basic notations. Denote by $D_t=D\times [0,T]$, and  by $D_{\tau}=D^N\times \widetilde{T}=\left(-\frac{1}{2},\frac{1}{2}\right)^N\times \left(-\frac{1}{2}, \frac{1}{2}\right)$ which is the subset of $\mathbb{R}_y^N\times \mathbb{R}_\tau$. Denote by $L^p_{per}(D_{\tau})$ all the  $D_{\tau}$-periodic functions in $L_{loc}^p(\mathbb{R}^N\times \mathbb{R}_\tau)$, which is a Banach space after endowed with the norm
$$\|\mathbf{u}\|^p_{L^p_{per}(D_{\tau})}=\int_{D_\tau}|\mathbf{u}(y,\tau)|^pdyd\tau.$$

Denote by $C^\infty_{per}(D_\tau)$ all the $D_{\tau}$-periodic infinite differential functions on $\mathbb{R}^N\times \mathbb{R}_\tau$. We further define the space $V_{per}$ be the $D^N$-periodic functions in $V(\mathbb{R}_y^N)$ with the norm
$$\|\mathbf{u}\|^2_{V_{per}}=\int_{D^N}|\nabla\mathbf{u}(y)|^2dy,$$
which is a Hilbert space. Also define by the space $L^p(\widetilde{T}; V_{per})$ all measurable functions $\mathbf{u}: \widetilde{T}\rightarrow V_{per}$ which $\|\mathbf{u}(\tau)\|_{V_{per}}$ is integrable in $L^p(\widetilde{T})$, we endow it with the norm
$$\|\mathbf{u}\|^p_{L^p(\widetilde{T}; V_{per})}=\int_{\widetilde{T}}\|\mathbf{u}(\tau)\|_{V_{per}}^p d\tau,$$
which is a Banach space.

Now, we recall the concepts of $two$-$scale$ convergence.
\begin{definition} A sequence of $L^p(D_t)$-valued random variables $\mathbf{u}^\varepsilon$ is said to be $\Sigma$-weak convergence in $L^p(\Omega\times D_t)$ if there exists a certain $L^p(D_t, L^p_{per}(D_{\tau}))$-valued random variable $\mathbf{u}$ such that as $\varepsilon\rightarrow 0$,
$$\mathrm{E}\int_{D_t}\mathbf{u}^\varepsilon(x,t, \omega)\psi\left(x,t,\frac{x}{\varepsilon},\frac{t}{\varepsilon},\omega \right)dxdt\rightarrow \mathrm{E}\int_{D_t}\int_{D_\tau}\mathbf{u}(x,t,y,\tau,\omega)\psi(x,t,y,\tau,\omega)dxdydtd\tau,$$
for any $\psi\in L^{p'}(\Omega\times D_t, L^{p'}_{per}(D_{\tau}))$.
\end{definition}

\begin{lemma}\label{lem4.1} Suppose that $\mathbf{u}^\varepsilon$ is a sequence of $L^2(0,T;V)$-valued random variables with the regularity
$$\mathrm{E}\int_{0}^{T}\|\mathbf{u}^\varepsilon\|_{V}^2dt\leq C.$$
Then, there exist a $L^2(0,T;V)$-valued random variable $\mathbf{u}$ and a $L^2(D_t; L^2_{per}(D_{\tau}))$-valued random variable $\overline{\mathbf{u}}$ such that
\begin{align*}
\mathbf{u}^\varepsilon \rightarrow \mathbf{u},~ in~ L^2(\Omega; L^2(D_t)),
\end{align*}
and
\begin{align}\label{4.2*}
\frac{\partial \mathbf{u}^\varepsilon}{\partial x_i}\rightarrow \frac{\partial \mathbf{u}}{\partial x_i}+\frac{\partial \overline{\mathbf{u}}}{\partial y_i}, ~in ~ L^2(\Omega, L^2(D_t)), \Sigma\!-\!weak.
\end{align}
\end{lemma}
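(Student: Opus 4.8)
The plan is to run the classical Nguetseng--Allaire two--scale compactness argument, treating the probability variable $\omega\in\O$ and the fast time $\tau$ as inert parameters, and to use at one crucial point the strong convergence already available from Section~3. First, from the hypothesis $\mathrm{E}\int_0^T\|\mathbf{u}^\varepsilon\|_V^2\,dt\le C$ the family $\mathbf{u}^\varepsilon$ is bounded in $L^2(\O\times(0,T);V)$, so $\{\mathbf{u}^\varepsilon\}$ and $\{\partial\mathbf{u}^\varepsilon/\partial x_i\}_{i=1}^N$ are bounded in $L^2(\O\times D_t)$. I would then invoke the stochastic version of the two--scale compactness theorem, see \cite{ngu2,all,raz}, to extract along a subsequence $u_0,\chi_i\in L^2(\O\times D_t;L^2_{per}(D_{\tau}))$ with $\mathbf{u}^\varepsilon\to u_0$ and $\partial\mathbf{u}^\varepsilon/\partial x_i\to\chi_i$, $\Sigma$--weak in $L^2(\O\times D_t)$; the cell average $\mathbf{u}:=\int_{D_\tau}u_0\,dyd\tau$ is the weak $L^2(\O\times(0,T);V)$ limit of $\mathbf{u}^\varepsilon$, hence an $L^2(0,T;V)$--valued random variable.

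Next I would identify $u_0$. The strong convergence $\mathbf{u}^\varepsilon\to\mathbf{u}$ in $L^2(\O;L^2(D_t))$ is inherited from Section~3 --- the a.s.\ convergence in $X$ together with the uniform moments \eqref{1.23**} and Vitali's theorem. Testing $\mathbf{u}^\varepsilon$ against $\psi(x,t,x/\varepsilon,t/\varepsilon)$, $\psi\in C^\infty_c(D_t;C^\infty_{per}(D_\tau))$, and combining the Riemann--Lebesgue weak convergence $\psi(x,t,\cdot/\varepsilon,\cdot/\varepsilon)\rightharpoonup\int_{D_\tau}\psi\,dyd\tau$ with the strong convergence of $\mathbf{u}^\varepsilon$, one finds $\mathrm{E}\int_{D_t}\int_{D_\tau}u_0\psi=\mathrm{E}\int_{D_t}\int_{D_\tau}\mathbf{u}\psi$ for all $\psi$, so $u_0(x,t,y,\tau,\omega)=\mathbf{u}(x,t,\omega)$ and $u_0$ is independent of the fast variables. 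This is exactly where I need the strong, not merely weak, convergence: the gradient bound by itself kills only the $y$--oscillations of $u_0$, not the $\tau$--ones. This yields the first assertion.

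It then remains to establish \eqref{4.2*}. I would fix a smooth, compactly supported, $D_\tau$--periodic field $\psi=(\psi_1,\dots,\psi_N)$ on $D_t\times D_\tau$ with $\sum_i\partial\psi_i/\partial y_i=0$, integrate by parts in $x$,
\[
\mathrm{E}\int_{D_t}\sum_i\frac{\partial\mathbf{u}^\varepsilon}{\partial x_i}\,\psi_i\Big(x,t,\tfrac{x}{\varepsilon},\tfrac{t}{\varepsilon}\Big)dxdt
=-\mathrm{E}\int_{D_t}\mathbf{u}^\varepsilon\Big[\sum_i\tfrac{\partial\psi_i}{\partial x_i}+\tfrac{1}{\varepsilon}\sum_i\tfrac{\partial\psi_i}{\partial y_i}\Big]\Big(x,t,\tfrac{x}{\varepsilon},\tfrac{t}{\varepsilon}\Big)dxdt,
\]
and note that the $\varepsilon^{-1}$--term drops because $\psi$ is $y$--divergence--free. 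Passing to the limit with $u_0=\mathbf{u}$ and the $\Sigma$--weak convergence of $\partial\mathbf{u}^\varepsilon/\partial x_i$ gives $\mathrm{E}\int_{D_t}\int_{D_\tau}\sum_i(\chi_i-\partial\mathbf{u}/\partial x_i)\psi_i=0$ for all such $\psi$; taking $\psi$ independent of $(y,\tau)$ shows moreover that $\chi_i-\partial\mathbf{u}/\partial x_i$ has zero mean over $D_\tau$. Hence, for a.e.\ $(x,t,\tau,\omega)$, $\chi-\nabla_x\mathbf{u}$ is $L^2(D^N)^N$--orthogonal to the constants and to the periodic $y$--divergence--free fields, so it is a $y$--gradient. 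I would then define $\overline{\mathbf{u}}(x,t,\cdot,\tau,\omega)\in V_{per}$ as the unique zero--mean solution of $\int_{D^N}\nabla_y\overline{\mathbf{u}}\cdot\nabla_y\phi\,dy=\int_{D^N}(\chi-\nabla_x\mathbf{u})\cdot\nabla_y\phi\,dy$ for all $\phi\in V_{per}$; Lax--Milgram provides existence, uniqueness, measurable dependence on the parameters, and (via the a priori bound) $\overline{\mathbf{u}}\in L^2(\O\times D_t;L^2(\widetilde{T};V_{per}))$, refining the stated regularity. Finally, $\chi-\nabla_x\mathbf{u}-\nabla_y\overline{\mathbf{u}}$ is orthogonal to the constants, to the periodic $y$--divergence--free fields and to all $y$--gradients, hence to all of $L^2(D^N)^N$, so it vanishes, i.e.\ $\partial\mathbf{u}^\varepsilon/\partial x_i\to\partial\mathbf{u}/\partial x_i+\partial\overline{\mathbf{u}}/\partial y_i$, $\Sigma$--weak. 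Uniqueness of $\mathbf{u}$ and $\overline{\mathbf{u}}$ in these relations upgrades the subsequential convergence to convergence of the whole sequence.

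The extraction of the two--scale limits and the identification $u_0=\mathbf{u}$ are routine once the strong $L^2$ convergence from Section~3 is available. The hard part is the last step: choosing the correct class of oscillating $y$--divergence--free test functions, carrying out the Helmholtz--de~Rham decomposition \emph{on the periodic cell with $(x,t,\tau,\omega)$ as parameters}, and verifying that the corrector $\overline{\mathbf{u}}$ is jointly measurable and lies in $L^2(\O\times D_t;L^2(\widetilde{T};V_{per}))$ with $\nabla_y\overline{\mathbf{u}}$ square--integrable. The extra fast time variable $\tau$, not present in the classical stationary elliptic setting, is absorbed harmlessly as one more parameter of the cell problem.
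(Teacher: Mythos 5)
The paper does not actually prove Lemma \ref{lem4.1}: it is stated as a recall of the (stochastic) two-scale compactness theory of \cite{ngu2,all,raz}, and the version the paper really uses afterwards is Lemma \ref{lem4.2}, quoted from \cite[Theorem 4]{raz}, whose hypotheses include the almost sure convergence $\mathbf{u}^\varepsilon\rightarrow\mathbf{u}$ in $L^2(D_t)$. Your reconstruction is the standard Nguetseng--Allaire argument with $(\omega,\tau)$ treated as inert parameters, and it is essentially sound: the extraction of $\Sigma$-weak limits from the $L^2(\Omega\times D_t)$ bounds, the integration by parts against $y$-divergence-free periodic test fields (which kills the $\varepsilon^{-1}$ term), and the cell-wise Helmholtz/Lax--Milgram construction of $\overline{\mathbf{u}}$ with measurable dependence on the parameters are exactly the steps carried out in the cited references.

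Two caveats. First, you correctly observe that the gradient bound only forces the two-scale limit $u_0$ to be independent of $y$, not of $\tau$, and that both the $\tau$-independence and the strong convergence in the first assertion must come from elsewhere; you import them from the Section 3 compactness. This is consistent with how the paper proceeds --- it is precisely why Lemma \ref{lem4.2} carries the extra hypothesis --- but it means your argument proves the Lemma \ref{lem4.2} version rather than Lemma \ref{lem4.1} as literally stated: from $\mathrm{E}\int_0^T\|\mathbf{u}^\varepsilon\|_V^2\,dt\leq C$ alone one cannot conclude strong convergence of the full sequence in $L^2(\Omega;L^2(D_t))$, nor a $\tau$-independent two-scale limit, so the lemma should be read as a subsequential statement to be combined with the a.s.\ convergence already secured. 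Second, your closing remark that uniqueness upgrades the subsequential convergence to the whole sequence is justified for $\mathbf{u}$ (it is the strong limit, hence unique) but not for \eqref{4.2*}: the two-scale limit $\chi$ of $\nabla\mathbf{u}^\varepsilon$, and therefore $\overline{\mathbf{u}}$, is only determined along the extracted subsequence at this stage; its uniqueness is obtained later in the paper from the cell problem \eqref{var1} and the equation, not from the compactness lemma itself. Both points are harmless for the way the lemma is actually used.
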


The following  version of compactness result should be more appropriate in our setting.
\begin{lemma}\cite[Theorem 4]{raz}\label{lem4.2} Under the assumptions of Lemma \ref{lem4.1}, and the assumption that
$$\mathbf{u}^\varepsilon\rightarrow \mathbf{u}, ~in ~L^2(D_t),~ \mathrm{P}~~ a.s.$$
then, there exist a subsequence (still denoted by $\mathbf{u}^\varepsilon$) and a $L^2(D_t; L^2_{per}(D_{\tau}))$-valued random variable $\overline{\mathbf{u}}$ such that \eqref{4.2*} holds.
\end{lemma}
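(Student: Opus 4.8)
The plan is to follow the classical structure argument of Nguetseng and Allaire for two--scale limits of gradients, now carried out in the stochastic space--time setting. Since $\mathrm{E}\int_{0}^{T}\|\mathbf{u}^\varepsilon\|_{V}^2\,dt\le C$, the sequences $\mathbf{u}^\varepsilon$ and $\partial\mathbf{u}^\varepsilon/\partial x_i$, $1\le i\le N$, are bounded in $L^2(\Omega\times D_t)$. By the two--scale compactness theorem (the stochastic analogue of the Nguetseng--Allaire result already used in Lemma \ref{lem4.1}), one extracts a subsequence and functions $\zeta_i\in L^2(\Omega\times D_t;L^2_{per}(D_{\tau}))$ with $\partial\mathbf{u}^\varepsilon/\partial x_i\to\zeta_i$, $\Sigma$--weak. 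The whole task is then to identify $\zeta_i=\partial\mathbf{u}/\partial x_i+\partial\overline{\mathbf{u}}/\partial y_i$ for a suitable corrector $\overline{\mathbf{u}}$.

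First I identify the two--scale limit of $\mathbf{u}^\varepsilon$ itself. The hypothesis $\mathbf{u}^\varepsilon\to\mathbf{u}$ in $L^2(D_t)$, $\mathrm{P}$ a.s., together with $\sup_\varepsilon\mathrm{E}\|\mathbf{u}^\varepsilon\|_{L^2(D_t)}^2<\infty$, yields by Vitali that $\mathbf{u}^\varepsilon\to\mathbf{u}$ strongly in $L^1(\Omega\times D_t)$; testing against $\varphi(x,t)\theta(x/\varepsilon,t/\varepsilon)$ with $\varphi\in C_c^\infty(D_t)$, $\theta\in C^\infty_{per}(D_{\tau})$ and using $\theta(x/\varepsilon,t/\varepsilon)\rightharpoonup\int_{D_\tau}\theta$ weak--$\ast$ in $L^\infty$, one sees (by the usual density argument of the two--scale framework) that $\mathbf{u}^\varepsilon$ $\Sigma$--weakly converges to $\mathbf{u}(x,t)$, independent of $(y,\tau)$; moreover $\mathbf{u}\in L^2(\Omega;L^2(0,T;V))$ by weak lower semicontinuity of the $V$--norm.

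The core step is the corrector extraction. Fix $\psi=(\psi_1,\dots,\psi_N)$ with each $\psi_i$ a finite sum of products $\varphi(x,t)\Phi(y,\tau)$, $\varphi\in C_c^\infty(D_t)$, $\Phi\in C^\infty_{per}(D_\tau)$, subject to ${\rm div}_y\psi=\sum_i\partial_{y_i}\psi_i=0$. Writing $(\cdot)^\varepsilon$ for evaluation at $(x,t,x/\varepsilon,t/\varepsilon)$ and integrating by parts in $x_i$,
\[
\mathrm{E}\int_{D_t}\sum_i\frac{\partial\mathbf{u}^\varepsilon}{\partial x_i}\,\psi_i^\varepsilon\,dxdt
=-\mathrm{E}\int_{D_t}\mathbf{u}^\varepsilon\Big(\sum_i\big(\partial_{x_i}\psi_i\big)^\varepsilon+\tfrac1\varepsilon\big({\rm div}_y\psi\big)^\varepsilon\Big)dxdt
=-\mathrm{E}\int_{D_t}\mathbf{u}^\varepsilon\sum_i\big(\partial_{x_i}\psi_i\big)^\varepsilon\,dxdt,
\]
the singular term dropping out by the constraint. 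Letting $\varepsilon\to0$, using the two two--scale limits just identified and then integrating by parts again in $x$ on the right (legitimate since $\mathbf{u}\in L^2(0,T;V)$ a.s. and $\varphi$ is compactly supported), gives
\[
\mathrm{E}\int_{D_t}\int_{D_\tau}\sum_i\Big(\zeta_i-\frac{\partial\mathbf{u}}{\partial x_i}\Big)\psi_i\,dx\,dy\,dt\,d\tau=0
\]
for every such $\psi$, and by density for all $\psi\in L^2(\Omega\times D_t;L^2_{per}(D_{\tau})^N)$ with ${\rm div}_y\psi=0$. By the classical de Rham/Weyl characterization of the orthogonal complement of divergence--free periodic fields on the torus $D^N$ (with $(\omega,x,t,\tau)$ entering only as parameters), there is, for a.e. $(\omega,x,t)$ and a.e. $\tau$, a unique $\overline{\mathbf{u}}(x,t,\cdot,\tau)\in V_{per}$ normalized by $\int_{D^N}\overline{\mathbf{u}}\,dy=0$ with $\zeta_i-\partial\mathbf{u}/\partial x_i=\partial\overline{\mathbf{u}}/\partial y_i$; solving the corresponding periodic Poisson problem $-\Delta_y\overline{\mathbf{u}}=-{\rm div}_y(\zeta-\nabla_x\mathbf{u})$ shows $\overline{\mathbf{u}}$ depends measurably on all parameters and satisfies $\|\overline{\mathbf{u}}(x,t,\cdot,\tau)\|_{V_{per}}\le C\|\zeta(x,t,\cdot,\tau)-\nabla_x\mathbf{u}(x,t)\|_{L^2(D^N)}$, so after integration it is a genuine $L^2(D_t;L^2_{per}(D_{\tau}))$--valued random variable. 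This establishes \eqref{4.2*}.

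The step I expect to be the main obstacle is this last one: verifying the de Rham--type decomposition \emph{with} joint measurability in $(\omega,x,t,\tau)$ in the right functional setting. Periodicity is only in $y$, with $\tau$ a passive parameter, so one must run the torus decomposition fiberwise and then check that the resulting corrector is jointly measurable and square--integrable; the density argument reducing general $\Sigma$--test functions to the divergence--free tensor products also requires care. The stochastic layer is essentially harmless: all bounds are in expectation, linearity transports everything through, and the $\omega$--measurability of $\overline{\mathbf{u}}$ is inherited from the explicit Poisson solve.
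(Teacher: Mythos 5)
First, a caveat: the paper does not prove this lemma at all --- it is quoted verbatim from \cite[Theorem 4]{raz}, so there is no internal proof to compare against. Your reconstruction follows exactly the classical Nguetseng--Allaire route that the cited reference adapts to the stochastic setting (two--scale compactness for the bounded gradients, testing against $y$-divergence-free oscillating fields so that the $\varepsilon^{-1}$ term cancels, passing to the limit using the strong convergence of $\mathbf{u}^\varepsilon$, and a fiberwise de Rham/Helmholtz decomposition on the torus with $(\omega,x,t,\tau)$ as parameters), and the analytic steps (integration by parts with $\varphi\in C^\infty_c(D_t)$, Vitali to upgrade the a.s.\ convergence, the measurable periodic Poisson solve for $\overline{\mathbf{u}}$) are all sound.

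There is, however, one genuine gap as written: in the orthogonality step you only use \emph{deterministic} test fields $\psi_i=\varphi(x,t)\Phi(y,\tau)$ and then invoke ``density'' to pass to all divergence-free $\psi\in L^2(\Omega\times D_t;L^2_{per}(D_\tau)^N)$. Deterministic fields are not dense in that space, and the relation $\mathrm{E}\int\int(\zeta_i-\partial_{x_i}\mathbf{u})\psi_i=0$ for deterministic $\psi$ only tells you that $\mathrm{E}[\zeta-\nabla_x\mathbf{u}]$ is a $y$-gradient, not that $\zeta(\omega,\cdot)-\nabla_x\mathbf{u}(\omega,\cdot)$ is one for a.e.\ $\omega$ (take $\zeta-\nabla_x\mathbf{u}=X(\omega)v(y)$ with $\mathrm{E}X=0$ and $v$ divergence-free to see the failure). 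Note that the $\Sigma$-weak convergence in Definition 4.1 is itself formulated against \emph{random} test functions, so the identification must hold $\omega$-wise. The repair is immediate and is exactly what the paper does in Proposition \ref{pro2.1}: run the same integration-by-parts argument with $\psi_i=\varphi(x,t)\Phi(y,\tau)1_{\mathcal{A}}(\omega)$, $\mathcal{A}$ ranging over a countable generating family of $\mathcal{F}$; since $1_{\mathcal{A}}$ is constant in $(x,t)$ nothing in the computation changes, and you then obtain the orthogonality for a.e.\ $(\omega,x,t,\tau)$, after which your fiberwise de Rham decomposition and the measurability/integrability argument via the periodic Poisson problem go through as you describe.
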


In order to pass the limit of the nonlinear term, we also need the $\Sigma$-strong convergence.
\begin{definition}\label{def4.2} A sequence of $L^p(D_t)$-valued random variables $\mathbf{u}^\varepsilon$ is said to be $\Sigma$-strong convergence in $L^p(\Omega\times D_t)$ if there exists a certain $L^p(D_t, L^p_{per}(D_{\tau}))$-valued random variable $\mathbf{u}$ such that as $\varepsilon\rightarrow 0$,
$$\mathrm{E}\int_{D_t}\mathbf{u}^\varepsilon(x,t, \omega)\mathbf{v}^\varepsilon(x,t, \omega) dxdt\rightarrow \mathrm{E}\int_{D_t}\int_{D_\tau}\mathbf{u}(x,t,y,\tau,\omega)\mathbf{v}(x,t,y,\tau,\omega)dxdydtd\tau,$$
for any bounded $\mathbf{v}^\varepsilon\in L^{p'}(\Omega\times D_t)$ with $\mathbf{v}^\varepsilon \rightarrow \mathbf{v}$ in $L^{p'}(\Omega\times D_t)$, $\Sigma$-weak.
\end{definition}

\begin{lemma}\label{lem4} \cite{zhi} A sequence of $L^p(D_t)$-valued random variables $\mathbf{u}^\varepsilon$ is said to be $\Sigma$-strong convergence in $L^p(\Omega\times D_t)$ if  there exists a certain $L^p(D_t, L^p_{per}(D_{\tau}))$-valued random variable $\mathbf{u}$ such that

i. the $\Sigma$-weak convergence holds;

ii. it satisfies
$$\|\mathbf{u}^\varepsilon\|_{L^p(\Omega\times D_t)}\rightarrow \|\mathbf{u}\|_{L^p(\Omega\times D_t, L^p_{per}(D_{\tau}))}.$$
\end{lemma}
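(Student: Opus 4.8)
The statement is the two-scale counterpart of the Radon--Riesz property of $L^{p}$ spaces, and the plan is to prove that conditions (i)--(ii) are in fact \emph{equivalent} to $\Sigma$-strong convergence in the sense of Definition \ref{def4.2} (only the implication ``(i)+(ii)$\;\Rightarrow\;$Definition \ref{def4.2}'' is needed in the sequel). I would work with $1<p<\infty$, so that $L^{p}(\Omega\times D_{t})$ and its dual are reflexive and uniformly convex, and fix a dense class of \emph{admissible} two-scale functions $\Phi(x,t,y,\tau,\omega)$ --- say those continuous in $(x,t)$, $D_{\tau}$-periodic and continuous in $(y,\tau)$, with a bounded measurable factor in $\omega$, together with the closure of this class under $s\mapsto|s|^{p-2}s$; for such $\Phi$ the oscillating sequence $\Phi^{\#}_{\varepsilon}(x,t,\omega):=\Phi(x,t,\tfrac{x}{\varepsilon},\tfrac{t}{\varepsilon},\omega)$ converges $\Sigma$-strongly to $\Phi$ and satisfies $\|\Phi^{\#}_{\varepsilon}\|_{L^{p}(\Omega\times D_{t})}\to\|\Phi\|_{L^{p}(\Omega\times D_{t};L^{p}_{per}(D_{\tau}))}$, by the Riemann--Lebesgue lemma for periodic functions.

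\textbf{Step 1: (i)+(ii)$\;\Rightarrow\;$Definition \ref{def4.2}.} Since ``$\mathbf{u}^{\varepsilon}-\mathbf{u}$'' is meaningless, the idea is to replace $\mathbf{u}$ by an oscillating \emph{recovery sequence}. Given $\delta>0$ I would pick an admissible $\mathbf{u}_{\delta}$ with $\|\mathbf{u}-\mathbf{u}_{\delta}\|_{L^{p}(\Omega\times D_{t};L^{p}_{per}(D_{\tau}))}\le\delta$ and consider $(\mathbf{u}_{\delta})^{\#}_{\varepsilon}$. For $p=2$ one expands
\[
\|\mathbf{u}^{\varepsilon}-(\mathbf{u}_{\delta})^{\#}_{\varepsilon}\|_{L^{2}(\Omega\times D_{t})}^{2}
=\|\mathbf{u}^{\varepsilon}\|^{2}-2\,\mathrm{E}\!\int_{D_{t}}\!\mathbf{u}^{\varepsilon}\,(\mathbf{u}_{\delta})^{\#}_{\varepsilon}\,dx\,dt+\|(\mathbf{u}_{\delta})^{\#}_{\varepsilon}\|^{2},
\]
and lets $\varepsilon\to0$: the first term tends to $\|\mathbf{u}\|^{2}$ by (ii), the last to $\|\mathbf{u}_{\delta}\|^{2}$, and the cross term to $\mathrm{E}\iint\mathbf{u}\,\mathbf{u}_{\delta}$ because $(\mathbf{u}_{\delta})^{\#}_{\varepsilon}$ converges $\Sigma$-strongly while $\mathbf{u}^{\varepsilon}\to\mathbf{u}$ $\Sigma$-weakly by (i); hence $\limsup_{\varepsilon\to0}\|\mathbf{u}^{\varepsilon}-(\mathbf{u}_{\delta})^{\#}_{\varepsilon}\|_{L^{2}}^{2}=\|\mathbf{u}-\mathbf{u}_{\delta}\|^{2}\le\delta^{2}$. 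For general $p$ the same conclusion (with $\delta$ replaced by a modulus $r(\delta)\to0$) follows from the Clarkson-type inequality $|b|^{p}\ge|a|^{p}+p|a|^{p-2}a\cdot(b-a)+c_{p}|b-a|^{p}$ for $p\ge2$ (a routine truncation handles $1<p<2$), taken with $a=(\mathbf{u}_{\delta})^{\#}_{\varepsilon}$, $b=\mathbf{u}^{\varepsilon}$, using that $|(\mathbf{u}_{\delta})^{\#}_{\varepsilon}|^{p-2}(\mathbf{u}_{\delta})^{\#}_{\varepsilon}$ is again an oscillating admissible sequence paired against $\mathbf{u}^{\varepsilon}$ via (i). Finally, for any bounded $\mathbf{v}^{\varepsilon}\to\mathbf{v}$ $\Sigma$-weakly I would split $\mathrm{E}\int\mathbf{u}^{\varepsilon}\mathbf{v}^{\varepsilon}=\mathrm{E}\int(\mathbf{u}^{\varepsilon}-(\mathbf{u}_{\delta})^{\#}_{\varepsilon})\mathbf{v}^{\varepsilon}+\mathrm{E}\int(\mathbf{u}_{\delta})^{\#}_{\varepsilon}\mathbf{v}^{\varepsilon}$: the first term is $O(r(\delta)^{1/p})$ uniformly in $\varepsilon$ by H\"older and the previous estimate, while the second converges to $\mathrm{E}\iint\mathbf{u}_{\delta}\mathbf{v}$ (as $\varepsilon\to0$) and then to $\mathrm{E}\iint\mathbf{u}\mathbf{v}$ (as $\delta\to0$) by $\Sigma$-strong convergence of the recovery sequence. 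Letting $\varepsilon\to0$ then $\delta\to0$ gives Definition \ref{def4.2}.

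\textbf{Step 2: converse, and the main obstacle.} For the reverse implication I would test Definition \ref{def4.2} against $\mathbf{v}^{\varepsilon}=\psi(x,t,\tfrac{x}{\varepsilon},\tfrac{t}{\varepsilon},\omega)$ with $\psi$ admissible (plus a Banach--Steinhaus argument for the uniform $L^{p}$-bound on $\mathbf{u}^{\varepsilon}$), getting the $\Sigma$-weak relation for admissible and, by density, all test functions --- this is (i); then, with (i) in hand, lower semicontinuity of the norm gives $\|\mathbf{u}\|\le\liminf_{\varepsilon}\|\mathbf{u}^{\varepsilon}\|$, for $p=2$ Definition \ref{def4.2} with $\mathbf{v}^{\varepsilon}=\mathbf{u}^{\varepsilon}$ at once gives $\|\mathbf{u}^{\varepsilon}\|^{2}\to\|\mathbf{u}\|^{2}$, and for general $p$ one extracts a subsequence with $\|\mathbf{u}^{\varepsilon}\|_{L^{p}}^{p}\to L$ and $|\mathbf{u}^{\varepsilon}|^{p-2}\mathbf{u}^{\varepsilon}\rightharpoonup\mathbf{w}$ $\Sigma$-weakly, whence $L=\mathrm{E}\iint\mathbf{u}\,\mathbf{w}\le\|\mathbf{u}\|\,\|\mathbf{w}\|\le\|\mathbf{u}\|\,L^{1/p'}$ forces $L=\|\mathbf{u}\|^{p}$, i.e.\ (ii). I expect the main obstacle to be exactly the fact that $\mathbf{u}^{\varepsilon}$ and the limit $\mathbf{u}$ live in different spaces ($L^{p}(\Omega\times D_{t})$ versus $L^{p}(\Omega\times D_{t};L^{p}_{per}(D_{\tau}))$), so uniform convexity is not available directly; everything hinges on constructing the oscillating recovery sequence $(\mathbf{u}_{\delta})^{\#}_{\varepsilon}$ from admissible approximations of $\mathbf{u}$, estimating the honest difference $\mathbf{u}^{\varepsilon}-(\mathbf{u}_{\delta})^{\#}_{\varepsilon}$ in $L^{p}(\Omega\times D_{t})$, and absorbing the approximation error in the iterated limit $\varepsilon\to0$, $\delta\to0$. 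The random parameter $\omega$ plays no active role and is carried along inside the expectation throughout.
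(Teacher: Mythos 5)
The paper offers no proof of this lemma at all: it is quoted from Zhikov \cite{zhi} (the deterministic analogue is the classical strong two-scale convergence criterion of Allaire--Zhikov), so there is nothing in the text to compare your argument against. That said, your Step 1 is precisely the standard proof of this criterion and is correct in the case $p=2$, which is the only case the paper actually invokes (in the corrector result of Section 5): approximate $\mathbf{u}$ by an admissible $\mathbf{u}_\delta$, expand $\|\mathbf{u}^\varepsilon-(\mathbf{u}_\delta)^{\#}_{\varepsilon}\|_{L^2(\Omega\times D_t)}^2$, use hypothesis (ii) for the term $\|\mathbf{u}^\varepsilon\|^2$, the mean-value property of periodic functions for $\|(\mathbf{u}_\delta)^{\#}_{\varepsilon}\|^2$, and the $\Sigma$-weak convergence (i) tested against the admissible function $\mathbf{u}_\delta$ for the cross term; the pairing with a bounded $\Sigma$-weakly convergent $\mathbf{v}^\varepsilon$ then follows from the splitting you describe, yielding exactly Definition \ref{def4.2}. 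Two minor caveats, neither of which affects the paper: for $p\neq 2$ the Clarkson-type route requires that $|s|^{p-2}s$ composed with an admissible function remain admissible (you note this, but the case $1<p<2$ is genuinely more delicate than a ``routine truncation''), and the converse in your Step 2 is not needed anywhere in the paper; in particular taking $\mathbf{v}^\varepsilon=\mathbf{u}^\varepsilon$ in Definition \ref{def4.2} presupposes that $\mathbf{u}^\varepsilon$ is itself a bounded $\Sigma$-weakly convergent sequence in the dual exponent, which is automatic only for $p=2$.
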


The following lemma provides a way to pass the limit of a sequence of product functions.
\begin{lemma}\label{lem4.3}\cite{zhi} Assume that for any $r, p,q>1$ with $\frac{1}{r}=\frac{1}{p}+\frac{1}{q}$, if the following two conditions hold

i. a sequence of $L^p(D_t)$-valued random variables $\mathbf{u}^\varepsilon$ is $\Sigma$-weak convergence to some certain $\mathbf{u}\in L^p(\Omega\times D_t, L^p_{per}(D_{\tau}))$;

ii. a sequence of $L^q(D_t)$-valued random variables $\mathbf{v}^\varepsilon$ is $\Sigma$-strong convergence to some certain $\mathbf{v}\in L^q(\Omega\times D_t, L^q_{per}(D_{\tau}))$.

Then, we have the sequence of $\mathbf{u}^\varepsilon\mathbf{v}^\varepsilon$ is $\Sigma$-weak convergence to $\mathbf{u}\mathbf{v}$ in $L^r(\Omega\times D_t)$.
\end{lemma}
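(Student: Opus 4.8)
The plan is to check the defining relation of $\Sigma$-weak convergence for the products $\mathbf{u}^\varepsilon\mathbf{v}^\varepsilon$ directly, first reducing to a convenient dense class of oscillating test functions, and then replacing the two-scale limit $\mathbf{v}$ by functions that separate the two scales. To begin, I would record the uniform bounds: condition (i) (in particular the bound $\|\mathbf{u}^\varepsilon\|_{L^p(\Omega\times D_t)}\le C$ inherent in $\Sigma$-weak convergence), together with condition (ii) and Lemma \ref{lem4} — which gives $\|\mathbf{v}^\varepsilon\|_{L^q(\Omega\times D_t)}\to\|\mathbf{v}\|_{L^q(\Omega\times D_t,L^q_{per}(D_\tau))}$, hence $\|\mathbf{v}^\varepsilon\|_{L^q(\Omega\times D_t)}\le C$ — so that, by H\"older with $\frac1r=\frac1p+\frac1q$, the products $\mathbf{u}^\varepsilon\mathbf{v}^\varepsilon$ are bounded in $L^r(\Omega\times D_t)$ and $\mathbf{u}\mathbf{v}\in L^r(\Omega\times D_t,L^r_{per}(D_\tau))$. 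It then suffices to establish the defining limit against a dense family of test functions for which the oscillating evaluation is well-behaved, e.g. tensor products $\psi(x,t,y,\tau,\omega)=\phi(x,t,\omega)\chi(y,\tau)$ with $\phi\in L^\infty(\Omega\times D_t)$ and $\chi\in C^\infty_{per}(D_\tau)$; the passage from such $\psi$ to the full test space $L^{r'}(\Omega\times D_t,L^{r'}_{per}(D_\tau))$ is a routine $3\varepsilon$-argument using the uniform $L^r$-bound on $\mathbf{u}^\varepsilon\mathbf{v}^\varepsilon$ and the norm convergence $\|\psi(\cdot,\cdot,\tfrac{\cdot}{\varepsilon},\tfrac{\cdot}{\varepsilon},\cdot)\|_{L^{r'}(\Omega\times D_t)}\to\|\psi\|_{L^{r'}(\Omega\times D_t,L^{r'}_{per}(D_\tau))}$. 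So the task reduces to proving, for every such $\psi$ and $\psi^\varepsilon(x,t,\omega):=\psi(x,t,\tfrac{x}{\varepsilon},\tfrac{t}{\varepsilon},\omega)$,
\[
\mathrm{E}\int_{D_t}\mathbf{u}^\varepsilon\mathbf{v}^\varepsilon\psi^\varepsilon\,dx\,dt\ \longrightarrow\ \mathrm{E}\int_{D_t}\int_{D_\tau}\mathbf{u}\,\mathbf{v}\,\psi\,dx\,dy\,dt\,d\tau .
\]

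Now fix $\delta>0$ and pick $\mathbf{v}_\delta=\sum_k\beta_k(x,t,\omega)\gamma_k(y,\tau)$ with $\beta_k\in L^\infty(\Omega\times D_t)$, $\gamma_k\in C_{per}(D_\tau)$ and $\|\mathbf{v}-\mathbf{v}_\delta\|_{L^q(\Omega\times D_t,L^q_{per}(D_\tau))}<\delta$; set $\mathbf{v}_\delta^\varepsilon(x,t,\omega):=\mathbf{v}_\delta(x,t,\tfrac{x}{\varepsilon},\tfrac{t}{\varepsilon},\omega)$ and split $\mathbf{u}^\varepsilon\mathbf{v}^\varepsilon\psi^\varepsilon=\mathbf{u}^\varepsilon\mathbf{v}_\delta^\varepsilon\psi^\varepsilon+\mathbf{u}^\varepsilon(\mathbf{v}^\varepsilon-\mathbf{v}_\delta^\varepsilon)\psi^\varepsilon$. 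For the first term the crucial observation is that $\mathbf{v}_\delta^\varepsilon\psi^\varepsilon=(\mathbf{v}_\delta\psi)(x,t,\tfrac{x}{\varepsilon},\tfrac{t}{\varepsilon},\omega)$, where $\mathbf{v}_\delta\psi=\sum_k(\beta_k\phi)(x,t,\omega)(\gamma_k\chi)(y,\tau)$ again belongs to $L^\infty(\Omega\times D_t,L^\infty_{per}(D_\tau))\subset L^{p'}(\Omega\times D_t,L^{p'}_{per}(D_\tau))$, hence is an admissible oscillating test function for $\mathbf{u}^\varepsilon$; applying the definition of the $\Sigma$-weak convergence of $\mathbf{u}^\varepsilon$ to $\mathbf{u}$ then gives
\[
\mathrm{E}\int_{D_t}\mathbf{u}^\varepsilon\mathbf{v}_\delta^\varepsilon\psi^\varepsilon\,dx\,dt\ \longrightarrow\ \mathrm{E}\int_{D_t}\int_{D_\tau}\mathbf{u}\,\mathbf{v}_\delta\,\psi\,dx\,dy\,dt\,d\tau ,
\]
and, letting $\delta\to0$, the right-hand side tends to $\mathrm{E}\iint\mathbf{u}\mathbf{v}\psi$ by H\"older on $\Omega\times D_t\times D_\tau$ and $\|\mathbf{v}-\mathbf{v}_\delta\|_{L^q}<\delta$.

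For the second term H\"older yields
\[
\Bigl|\mathrm{E}\int_{D_t}\mathbf{u}^\varepsilon(\mathbf{v}^\varepsilon-\mathbf{v}_\delta^\varepsilon)\psi^\varepsilon\,dx\,dt\Bigr|\le\|\psi^\varepsilon\|_{L^\infty}\|\mathbf{u}^\varepsilon\|_{L^p(\Omega\times D_t)}\|\mathbf{v}^\varepsilon-\mathbf{v}_\delta^\varepsilon\|_{L^q(\Omega\times D_t)}\le C\|\psi\|_{L^\infty}\|\mathbf{v}^\varepsilon-\mathbf{v}_\delta^\varepsilon\|_{L^q(\Omega\times D_t)},
\]
so the whole matter reduces to the uniform estimate
\[
\limsup_{\varepsilon\to0}\|\mathbf{v}^\varepsilon-\mathbf{v}_\delta^\varepsilon\|_{L^q(\Omega\times D_t)}\le\|\mathbf{v}-\mathbf{v}_\delta\|_{L^q(\Omega\times D_t,L^q_{per}(D_\tau))},
\]
whose right-hand side tends to $0$ as $\delta\to0$. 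This is the step I expect to be the main obstacle. When $q=2$ it is immediate: expand $\|\mathbf{v}^\varepsilon-\mathbf{v}_\delta^\varepsilon\|_{L^2}^2=\|\mathbf{v}^\varepsilon\|_{L^2}^2-2\,\mathrm{E}\!\int_{D_t}\mathbf{v}^\varepsilon\mathbf{v}_\delta^\varepsilon\,dx\,dt+\|\mathbf{v}_\delta^\varepsilon\|_{L^2}^2$, and use $\|\mathbf{v}^\varepsilon\|_{L^2}^2\to\|\mathbf{v}\|^2$ (Lemma \ref{lem4}), $\|\mathbf{v}_\delta^\varepsilon\|_{L^2}^2\to\|\mathbf{v}_\delta\|^2$ (mean-value property, continuous periodic profiles), and $\mathrm{E}\!\int_{D_t}\mathbf{v}^\varepsilon\mathbf{v}_\delta^\varepsilon\,dx\,dt\to\mathrm{E}\iint\mathbf{v}\mathbf{v}_\delta$ (pairing the $\Sigma$-strongly convergent $\mathbf{v}^\varepsilon$ against the bounded, $\Sigma$-weakly convergent $\mathbf{v}_\delta^\varepsilon$, Definition \ref{def4.2}); the three limits combine to $\|\mathbf{v}-\mathbf{v}_\delta\|^2$. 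For general $q\in(1,\infty)$ the same three ingredients remain at hand, but the algebraic identity must be replaced by a Clarkson/uniform-convexity estimate in $L^q$; equivalently, one passes to the periodic unfolding operator $\mathcal T_\varepsilon$, under which $\Sigma$-strong convergence of $\mathbf{v}^\varepsilon$ becomes ordinary strong $L^q(\Omega\times D_t\times D_\tau)$ convergence of $\mathcal T_\varepsilon\mathbf{v}^\varepsilon$ to $\mathbf{v}$ while $\mathcal T_\varepsilon\mathbf{v}_\delta^\varepsilon\to\mathbf{v}_\delta$ strongly, so that the near-isometry and linearity of $\mathcal T_\varepsilon$ give the bound (indeed with equality in the limit). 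Granting this, first letting $\varepsilon\to0$ and then $\delta\to0$ in the two terms establishes the displayed limit, and hence the lemma.
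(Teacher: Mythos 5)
The paper offers no proof of this lemma --- it is quoted from \cite{zhi} --- so there is no in-paper argument to compare against. Judged on its own terms, your proposal is the standard product-lemma argument of two-scale convergence theory (Allaire/Zhikov), and its architecture is sound: reduce to admissible tensor-product test functions $\psi=\phi\chi$; approximate the strong two-scale limit $\mathbf{v}$ by admissible oscillating profiles $\mathbf{v}_\delta$; absorb $\mathbf{v}_\delta^\varepsilon\psi^\varepsilon$ into the test function for the $\Sigma$-weak convergence of $\mathbf{u}^\varepsilon$ (this is the right key observation); and control the remainder by H\"older together with a norm estimate on $\mathbf{v}^\varepsilon-\mathbf{v}_\delta^\varepsilon$. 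Two caveats on the framing: the uniform bound $\|\mathbf{u}^\varepsilon\|_{L^p(\Omega\times D_t)}\le C$ is not literally ``inherent'' in the definition of $\Sigma$-weak convergence (one needs Banach--Steinhaus plus uniform control of $\|\psi^\varepsilon\|_{L^{p'}}$, and for a general element of $L^{p'}(\Omega\times D_t,L^{p'}_{per}(D_\tau))$ the trace $\psi(x,t,x/\varepsilon,t/\varepsilon,\omega)$ need not even be measurable); in this paper the bound is supplied by the a priori estimates, and it is cleaner to add it as a hypothesis. Similarly, the reduction to tensor-product test functions silently uses the same admissibility issue; restricting the test class as you do is exactly the right fix.

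The one step that is asserted rather than proved is the estimate $\limsup_{\varepsilon\to0}\|\mathbf{v}^\varepsilon-\mathbf{v}_\delta^\varepsilon\|_{L^q(\Omega\times D_t)}\le\|\mathbf{v}-\mathbf{v}_\delta\|_{L^q(\Omega\times D_t,L^q_{per}(D_\tau))}$ for general $q$, which you correctly identify as the crux. Your $q=2$ computation is complete, and $p=q=2$ is the only case the paper actually invokes (in the passage to the limit of $B(\mathbf{u}^\varepsilon,\mathbf{u}^\varepsilon)$ in Proposition \ref{pro2.1}, both factors are $L^2$). For general $q$ the Clarkson route does close, but not quite in the form you state: combining norm convergence of $\mathbf{v}^\varepsilon$ (Lemma \ref{lem4}) and of $\mathbf{v}_\delta^\varepsilon$ (mean-value property) with the $\Sigma$-weak lower semicontinuity of the norm of the sum, Clarkson's inequality gives, for $q\ge2$,
\begin{equation*}
\limsup_{\varepsilon\to0}\|\mathbf{v}^\varepsilon-\mathbf{v}_\delta^\varepsilon\|_{L^q}^q\le 2^{q-1}\left(\|\mathbf{v}\|^q+\|\mathbf{v}_\delta\|^q\right)-\|\mathbf{v}+\mathbf{v}_\delta\|^q,
\end{equation*}
which is weaker than $\|\mathbf{v}-\mathbf{v}_\delta\|^q$ but still tends to $0$ as $\delta\to0$ --- and that is all your argument needs (the case $1<q<2$ uses the second Clarkson inequality analogously). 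The unfolding route also works but imports the equivalence of $\Sigma$-strong convergence with strong $L^q$ convergence of the unfoldings, itself a theorem requiring proof. With either of these spelled out, the proof is complete.
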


Let $\widetilde{\mathbf{u}}^{\varepsilon}$ be the sequence we chosen from the Skorokhod-Jakubowski representative theorem. To simplify the notations, we still use $\mathbf{u}^\varepsilon, \mathrm{E}$ instead of $\widetilde{\mathbf{u}}^{\varepsilon}, \mathrm{E}^{\widetilde{\mathrm{P}}}$. We already known that from Proposition \ref{pro3.1}, $\mathrm{P}$ a.s.
\begin{align}\label{2.1}
\mathbf{u}^\varepsilon\rightarrow \mathbf{u}~ {\rm in}~ X, ~~\mathfrak{L}_{\mathbf{u}^\varepsilon}\rightarrow \mathfrak{L}_{\mathbf{u}},~{\rm in}~\mathscr{P}(\mathscr{X}),
\end{align}
and by Lemma \ref{lem4.2}, there exists $\overline{\mathbf{u}}\in L^2(\Omega\times D_t; L^2_{per}(D_{\tau}))$  such that
\begin{align}\label{4.4}
\frac{\partial \mathbf{u}^\varepsilon}{\partial x_i}\rightarrow \frac{\partial \mathbf{u}}{\partial x_i}+\frac{\partial \overline{\mathbf{u}}}{\partial y_i}, ~{\rm in} ~ L^2(\Omega\times (0,T);H), \Sigma\!-\!{\rm weak},
\end{align}
as $\varepsilon\rightarrow 0$. Also, the sequence of $\mathbf{u}^\varepsilon$ satisfies the energy estimate
\begin{align}\label{1.23}
 \mathrm{E}\left(\sup_{t\in [0,T]}\|\mathbf{u}^\varepsilon\|_H^p\right)+\mathrm{E}\int_{0}^{T}\|\mathbf{u}^\varepsilon\|_H^{p-2}\|\nabla \mathbf{u}^\varepsilon\|_{H}^2+\|\mathbf{u}^\varepsilon\|_H^{p-2}\|\mathbf{u}^\varepsilon\|_{L^4}^4dt\leq C,
 \end{align}
for any $p\geq 2$, where the positive constant $C$ is independent of $\varepsilon$.

Define the spaces
$$\mathbb{X}=V\times L^2(D; L^2(\widetilde{T}; V_{per})),$$
for any $\mathbf{u}=(\mathbf{u}_0, \mathbf{u}_1)$, with the norm
$$\|\mathbf{u}\|_{\mathbb{X}}=\|\mathbf{u}_0\|_{V}+\|\mathbf{u}_1\|_{ L^2(D; L^2(\widetilde{T}; V_{per}))},$$
and
$$\widetilde{\mathbb{X}}=L^2(0,T; V)\times L^2(D_t; L^2(\widetilde{T}; V_{per})),$$
with the norm
$$\|\mathbf{u}\|_{\widetilde{\mathbb{X}}}=\|\mathbf{u}_0\|_{L^2(0,T; V)}+\|\mathbf{u}_1\|_{ L^2(D_t; L^2(\widetilde{T}; V_{per}))}.$$

{\bf The limit equations}. We have that the couple $(\mathbf{u}, \overline{\mathbf{u}})$ solves the following variational problem.

\begin{proposition}\label{pro2.1} Under the assumptions $({\bf F.2})$, $({\bf G.1})$, the couple $(\mathbf{u}, \overline{\mathbf{u}})$ satisfies the equations $\mathrm{P}$ a.s.
\begin{align}\label{4.3}
&\int_{0}^{T}(\mathbf{u}'(t), \varphi)dt\nonumber\\&=-\sum_{i,j=1}^N\int_{D_t}\int_{D_\tau}a_{i,j}\left(\frac{\partial \mathbf{u}}{\partial x_i}+\frac{\partial \overline{\mathbf{u}}}{\partial y_i}\right)\left(\frac{\partial \varphi}{\partial x_j}+\frac{\partial \psi}{\partial y_j}\right)dxdydtd\tau\nonumber\\
&\quad-\int_{0}^{T}(B(\mathbf{u}, \mathbf{u}), \varphi)dt+\int_{0}^{T}(F(\mathbf{u}, \mathfrak{L}_{\mathbf{u}(t)}), \varphi)dt+\int_{0}^{T}(G(t, \mathbf{u}), \varphi)dW,
\end{align}
for any $(\varphi,\psi)\in L^2(\Omega; \widetilde{\mathbb{X}})$, $T>0$.
\end{proposition}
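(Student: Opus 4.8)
The plan is to pass to the limit $\varepsilon\to 0$ in the weak formulation (ii) of Lemma 2.1 written on the new probability space, testing against oscillating test functions of the two-scale type. Concretely, for fixed $(\varphi,\psi)\in L^2(\Omega;\widetilde{\mathbb{X}})$ with $\varphi$ smooth in $t$ and $\psi$ smooth and $D_\tau$-periodic in the fast variables, introduce the test function $\varphi^\varepsilon(x,t)=\varphi(x,t)+\varepsilon\,\psi\big(x,t,\tfrac{x}{\varepsilon},\tfrac{t}{\varepsilon}\big)$, so that $\nabla\varphi^\varepsilon=\nabla\varphi+(\nabla_y\psi)(x,t,\tfrac{x}{\varepsilon},\tfrac{t}{\varepsilon})+O(\varepsilon)$ in $L^2$. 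Plugging $\varphi^\varepsilon$ into the $\varepsilon$-equation gives, after integrating in $t$ over $[0,T]$,
\begin{align*}
\int_0^T((\mathbf{u}^\varepsilon)'(t),\varphi^\varepsilon)\,dt
&=-\int_0^T(A^\varepsilon\mathbf{u}^\varepsilon,\varphi^\varepsilon)_{V'\times V}\,dt
-\int_0^T(B(\mathbf{u}^\varepsilon,\mathbf{u}^\varepsilon),\varphi^\varepsilon)\,dt\\
&\quad+\int_0^T(F(\mathbf{u}^\varepsilon,\mathfrak{L}_{\mathbf{u}^\varepsilon(t)}),\varphi^\varepsilon)\,dt
+\int_0^T(G(t,\mathbf{u}^\varepsilon)\,dW,\varphi^\varepsilon),
\end{align*}
and the task is to identify the limit of each term. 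The elliptic term is the heart of the matter: writing it as $\sum_{i,j}\int_{D_t} a_{i,j}(\tfrac{x}{\varepsilon},\tfrac{t}{\varepsilon})\,\tfrac{\partial\mathbf{u}^\varepsilon}{\partial x_j}\big(\tfrac{\partial\varphi}{\partial x_i}+\tfrac{\partial\psi}{\partial y_i}(x,t,\tfrac{x}{\varepsilon},\tfrac{t}{\varepsilon})\big)\,dxdt+O(\varepsilon)$, one recognises $a_{i,j}(\tfrac{x}{\varepsilon},\tfrac{t}{\varepsilon})\big(\tfrac{\partial\varphi}{\partial x_i}+\tfrac{\partial\psi}{\partial y_i}(\cdots)\big)$ as admissible test functions for the $\Sigma$-weak convergence \eqref{4.4} of $\tfrac{\partial\mathbf{u}^\varepsilon}{\partial x_j}$ (since $a_{i,j}\in L^\infty_{per}$ and $\psi$ is smooth periodic), whence this term converges to $\sum_{i,j}\int_{D_t}\int_{D_\tau} a_{i,j}\big(\tfrac{\partial\mathbf{u}}{\partial x_j}+\tfrac{\partial\overline{\mathbf{u}}}{\partial y_j}\big)\big(\tfrac{\partial\varphi}{\partial x_i}+\tfrac{\partial\psi}{\partial y_i}\big)\,dxdydtd\tau$. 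For the time-derivative term, since $\mathbf{u}^\varepsilon\to\mathbf{u}$ in $C([0,T];V')$ $\mathrm{P}$-a.s. and $\varphi^\varepsilon\to\varphi$ strongly in $L^2(0,T;V)$, and since $\mathbf{u}^\varepsilon\in C([0,T];V')$, I would integrate by parts in time against the smooth $\varphi$ (transferring the derivative), use the a.s. convergence plus the uniform bound \eqref{1.23} and dominated convergence (the $L^p(\Omega)$ bounds with $p>2$ give uniform integrability) to conclude $\int_0^T((\mathbf{u}^\varepsilon)'(t),\varphi^\varepsilon)dt\to\int_0^T(\mathbf{u}'(t),\varphi)dt$; note the $\varepsilon\psi$ piece vanishes because its $V'$-pairing is $O(\varepsilon)$ against the bounded $\int_0^T\|(\mathbf{u}^\varepsilon)'\|$-type quantity, or more cleanly one just keeps $\varphi$ (not $\varphi^\varepsilon$) in this term from the start since the corrector only matters through the gradient in the elliptic term.

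For the nonlinear convection term, the strong convergence $\mathbf{u}^\varepsilon\to\mathbf{u}$ in $L^2(\Omega;L^2(D_t))$ together with the uniform $L^2(\Omega;L^2(0,T;V))$ and $L^2(\Omega;L^\infty(0,T;H))$ bounds gives, by interpolation and the estimate \eqref{2.3}, that $B(\mathbf{u}^\varepsilon,\mathbf{u}^\varepsilon)\to B(\mathbf{u},\mathbf{u})$ weakly in $L^2(\Omega\times(0,T);H^{-1,N/(N-1)})$ (a standard Lions-Aubin argument: $\mathbf{u}^\varepsilon\to\mathbf{u}$ strongly in $L^2(D_t)$ and $\nabla\mathbf{u}^\varepsilon\rightharpoonup\nabla\mathbf{u}$ weakly in $L^2$ suffices to pass the limit in the product after testing against the smooth $\varphi$), so $\int_0^T(B(\mathbf{u}^\varepsilon,\mathbf{u}^\varepsilon),\varphi^\varepsilon)dt\to\int_0^T(B(\mathbf{u},\mathbf{u}),\varphi)dt$. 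For $F$, assumption $(\mathbf{F.2})$ is exactly what is needed: $\mathbf{u}^\varepsilon\to\mathbf{u}$ in $H$ for a.e.\ $(t,\omega)$ (from the $L^2(D_t)$ convergence, along a further subsequence), and $\mathfrak{L}_{\mathbf{u}^\varepsilon(t)}\to\mathfrak{L}_{\mathbf{u}(t)}$ in $\mathscr{P}_2(V')$ (which follows from $\mathfrak{L}_{\mathbf{u}^\varepsilon}\to\mathfrak{L}_{\mathbf{u}}$ in $\mathscr{P}(\mathscr{X})$ in \eqref{2.1} together with the uniform moment bound \eqref{1.23} to upgrade weak to $2$-Wasserstein convergence), so $(F(\mathbf{u}^\varepsilon,\mathfrak{L}_{\mathbf{u}^\varepsilon(t)}),\varphi)\to(F(\mathbf{u},\mathfrak{L}_{\mathbf{u}(t)}),\varphi)$ pointwise, and $(\mathbf{F.1})$ with \eqref{1.23} yields the uniform integrability needed to apply dominated convergence after integrating in $(t,\omega)$. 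Finally, for the stochastic integral I would use $(\mathbf{G.1})$--$(\mathbf{G.2})$: $G(\cdot,\mathbf{u}^\varepsilon)\to G(\cdot,\mathbf{u})$ in $L^2(\Omega\times(0,T);L_2(H,H))$ by Lipschitz continuity plus the strong $L^2$ convergence of $\mathbf{u}^\varepsilon$ and dominated convergence, and $\widetilde W^\varepsilon\to\widetilde W$ in $C([0,T];H_0)$, so a standard lemma on convergence of stochastic integrals (e.g.\ the Skorokhod-type argument of Debussche--Glatt-Holtz--Temam) gives $\int_0^t(G(s,\mathbf{u}^\varepsilon)dW^\varepsilon,\varphi^\varepsilon)\to\int_0^t(G(s,\mathbf{u})dW,\varphi)$ in probability; the $\varepsilon\psi$ part is harmless since $\|G(s,\mathbf{u}^\varepsilon)\|_{L_2}$ is uniformly bounded in $L^2(\Omega\times(0,T))$ and it is multiplied by $\varepsilon\|\psi\|$.

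The main obstacle is the rigorous identification of the limit of the elliptic (diffusion) term, because it is the only place where the corrector $\overline{\mathbf{u}}$ enters and where the two-scale machinery is genuinely used: one must justify that $a_{i,j}(\tfrac{x}{\varepsilon},\tfrac{t}{\varepsilon})\big(\tfrac{\partial\varphi}{\partial x_i}+\tfrac{\partial\psi}{\partial y_i}(x,t,\tfrac{x}{\varepsilon},\tfrac{t}{\varepsilon})\big)$ really qualifies as a test function in the definition of $\Sigma$-weak convergence (this needs $\psi$ to be smooth and periodic, and then a density argument to reach general $(\varphi,\psi)\in L^2(\Omega;\widetilde{\mathbb{X}})$ using that the $L^2(\Omega\times D_t)$-norm of $\tfrac{\partial\mathbf{u}^\varepsilon}{\partial x_j}$ is uniformly bounded), and simultaneously that the error term coming from $\varepsilon\nabla_x\psi(x,t,\tfrac{x}{\varepsilon},\tfrac{t}{\varepsilon})$ vanishes. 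A secondary subtlety is the passage $\mathfrak{L}_{\mathbf{u}^\varepsilon(t)}\to\mathfrak{L}_{\mathbf{u}(t)}$ in $\mathscr{P}_2(V')$ at fixed time rather than in path space, which I would handle by combining the convergence $\mathfrak{L}_{\mathbf{u}^\varepsilon}\to\mathfrak{L}_{\mathbf{u}}$ in $\mathscr{P}(\mathscr{X})$ with the a.s.\ convergence of $\mathbf{u}^\varepsilon(t)$ in $V'$ for a.e.\ $t$ (a consequence of $\mathbf{u}^\varepsilon\to\mathbf{u}$ in $C([0,T];V')$) and the uniform second-moment bound to upgrade to Wasserstein convergence. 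Once all five terms are identified, \eqref{4.3} follows for smooth $(\varphi,\psi)$ and extends to all of $L^2(\Omega;\widetilde{\mathbb{X}})$ by density and the uniform bounds.
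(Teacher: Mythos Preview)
Your overall plan---testing the $\varepsilon$-equation against oscillating functions $\varphi^\varepsilon=\varphi+\varepsilon\psi(\cdot,\cdot,\cdot/\varepsilon,\cdot/\varepsilon)$ and passing each term to the limit---is the same skeleton the paper uses, and your treatment of the elliptic term, the nonlinear term, and the $F$-term is essentially correct. Two points deserve comment.

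\textbf{The time-derivative term.} Your claim that the $\varepsilon\psi$ contribution is ``$O(\varepsilon)$ against a bounded $\int_0^T\|(\mathbf u^\varepsilon)'\|$-type quantity'' is not quite right: $(\mathbf u^\varepsilon)'$ is not a function with uniform bounds (the equation is stochastic), and after integration by parts the time derivative of $\varepsilon\psi(x,t,x/\varepsilon,t/\varepsilon)$ produces an $O(1)$ term $\partial_\tau\psi$ from the fast time variable. The paper handles this by two-scale convergence: $\int_{D_t}\mathbf u^\varepsilon\,\partial_\tau\psi(x,t,x/\varepsilon,t/\varepsilon)\,dxdt\to\int_{D_t}\int_{D_\tau}\mathbf u\,\partial_\tau\psi\,dxdydtd\tau$, and then the $\tau$-periodicity of $\psi$ kills this integral. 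Your alternative (``keep only $\varphi$ in this term'') doesn't work without this observation, since the same test function must be used throughout.

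\textbf{The stochastic integral.} Here you take a genuinely different route. The paper does \emph{not} invoke a direct convergence-of-stochastic-integrals lemma; instead it uses a martingale representation argument: it defines the candidate limit process $\mathscr M(t)$ (the left side minus the deterministic right side of \eqref{4.3}), shows via the convergences already established that $\mathscr M$ is a square-integrable martingale with respect to the filtration generated by $(\mathbf u,W)$ with quadratic variation $\int_0^t\|G^*(r,\mathbf u)\varphi\|_H^2\,dr$, and then concludes $\mathscr M(t)=\int_0^t(G(r,\mathbf u),\varphi)\,dW$. This is precisely what the paper flags in the introduction as the device for handling infinite-dimensional noise together with distribution dependence. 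Your approach (Lipschitz $G$ plus strong $L^2$ convergence of $\mathbf u^\varepsilon$ plus $\widetilde W^\varepsilon\to\widetilde W$ in $C([0,T];H_0)$, then a Debussche--Glatt-Holtz--Temam-type lemma) is also valid here and arguably more direct; the paper's route avoids quoting such a lemma and instead reduces everything to deterministic limits plus the martingale representation theorem.
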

\begin{proof} First, for any $t\in [0,T]$ denote
\begin{align*}
\mathscr{M}_{\Psi^\varepsilon}^\varepsilon(t)&=\int_{0}^{t}((\mathbf{u}^\varepsilon)', \Psi^\varepsilon)dr+\int_{0}^{t}(A^\varepsilon \mathbf{u}^\varepsilon, \Psi^\varepsilon)_{V'\times V}dr+\int_{0}^{t}(B(\mathbf{u}^\varepsilon, \mathbf{u}^\varepsilon), \Psi^\varepsilon)dr\\&\quad-\int_{0}^{t}(F(\mathbf{u}^\varepsilon, \mathfrak{L}_{\mathbf{u}^\varepsilon(r)}), \Psi^\varepsilon)dr,
\end{align*}
where $\Psi^\varepsilon=\left(\varphi(x,t)+\varepsilon\psi\left(x,t,\frac{x}{\varepsilon}, \frac{t}{\varepsilon}\right)\right)1_{\mathcal{A}}(\omega)$, $(x,t)\in D_t$, for any $\varphi\in C^\infty(D_t), \psi\in C^\infty(D_t)\times C^\infty_{per}( D_\tau) $, and $1_{\cdot}$ is the indicator function, $\mathcal{A}\in \mathcal{B}(\Omega)$. We have $\Psi^\varepsilon\in L^2(\Omega; C^\infty(D_t))$. Note that $$\mathscr{M}_{\Psi^\varepsilon}^\varepsilon(t)=\int_{0}^{t}(G(r, \mathbf{u}^\varepsilon), \Psi^\varepsilon)dW$$ is a square integrable martingale with quadratic variation
$$\langle\!\langle \mathscr{M}_{\Psi}^\varepsilon \rangle\!\rangle=\int_{0}^{t}\|G^*(r, \mathbf{u}^\varepsilon)\Psi^\varepsilon\|^2_{H}dr,$$
where $G^*$ is the adjoint operator of $G$.

And, denote
\begin{align*}
\mathscr{M}(t)=&\int_{0}^{t}(\mathbf{u}'(r), \varphi1_{\mathcal{A}}(\omega))dr+\sum_{i,j=1}^N\int_{D_r}\int_{D_\tau}a_{i,j}\left(\frac{\partial \mathbf{u}}{\partial x_i}+\frac{\partial \overline{\mathbf{u}}}{\partial y_i}\right)\left(\frac{\partial \varphi}{\partial x_j}+\frac{\partial \psi}{\partial y_j}\right)1_{\mathcal{A}}(\omega)dxdydrd\tau\nonumber\\
&+\int_{0}^{t}(B(\mathbf{u}, \mathbf{u}), \varphi1_{\mathcal{A}}(\omega))dr
-\int_{0}^{t}(F(\mathbf{u}, \mathfrak{L}_{\mathbf{u}(r)}), \varphi1_{\mathcal{A}}(\omega))dr,
\end{align*}
where $D_r=D\times [0,t]$.
Then, it holds
\begin{align}\label{2.6}
&\mathrm{E}(\mathscr{M}_{\Psi^\varepsilon}^\varepsilon (t)-\mathscr{M}(t))\nonumber\\
&=\mathrm{E}\int_{0}^{t}((\mathbf{u}^\varepsilon)', \Psi^\varepsilon)-(\mathbf{u}'(r), \varphi1_{\mathcal{A}}(\omega))dr\nonumber\\
&\quad+\mathrm{E}\left(\int_{0}^{t}(A^\varepsilon \mathbf{u}^\varepsilon, \Psi^\varepsilon)_{V'\times V}dr-\sum_{i,j=1}^N\int_{D_r}\int_{D_\tau}a_{i,j}\left(\frac{\partial \mathbf{u}}{\partial x_i}+\frac{\partial \overline{\mathbf{u}}}{\partial y_i}\right)\left(\frac{\partial \varphi}{\partial x_j}+\frac{\partial \psi}{\partial y_j}\right)1_{\mathcal{A}}(\omega)dxdydrd\tau\right)\nonumber\\
&\quad+\mathrm{E}\int_{0}^{t}(B(\mathbf{u}^\varepsilon, \mathbf{u}^\varepsilon), \Psi^\varepsilon)-(B(\mathbf{u}, \mathbf{u}), \varphi1_{\mathcal{A}}(\omega))dr\nonumber\\
&\quad-\mathrm{E}\int_{0}^{t}(F(\mathbf{u}^\varepsilon, \mathfrak{L}_{\mathbf{u}^\varepsilon(t)}), \Psi^\varepsilon)-(F(\mathbf{u}, \mathfrak{L}_{\mathbf{u}(r)}), \varphi1_{\mathcal{A}}(\omega))dr.
\end{align}
We next verify that the right-hand side terms of \eqref{2.6} converge to zero as $\varepsilon\rightarrow 0$.  A simple calculation gives
\begin{align}\label{2.7}
\mathrm{E}\int_{0}^{t}((\mathbf{u}^\varepsilon)', \Psi^\varepsilon)dr=\mathrm{E}\int_{0}^{t}((\mathbf{u}^\varepsilon)', \left(\varphi+\varepsilon\psi\right)1_{\mathcal{A}}(\omega))dr
=\mathrm{E}\int_{0}^{t}\left(\mathbf{u}^\varepsilon, \frac{\partial\varphi}{\partial r}+\varepsilon\frac{\partial\psi}{\partial r}+\frac{\partial\psi}{\partial \tau}\right)1_{\mathcal{A}}(\omega)dr.
\end{align}
The H\"{o}lder inequality and \eqref{1.23} yield
\begin{align*}
\mathrm{E}\int_{0}^{t}\left(\mathbf{u}^\varepsilon, \varepsilon\frac{\partial\psi}{\partial r}1_{\mathcal{A}}(\omega)\right)dr\leq \varepsilon \mathrm{E}\int_{0}^{t}\|\mathbf{u}^\varepsilon\|_{H}\left\|\frac{\partial\psi}{\partial r}\right\|_{H} dr\leq C\varepsilon,
\end{align*}
as well as \eqref{2.1} lead to
\begin{align}\label{2.8}
&\mathrm{E}\int_{0}^{t}\left(\mathbf{u}^\varepsilon, \frac{\partial\varphi}{\partial r}+\left(\varepsilon\frac{\partial\psi}{\partial r}+\frac{\partial\psi}{\partial \tau}\right)1_{\mathcal{A}}(\omega)\right)dr\nonumber\\
&\rightarrow \mathrm{E}\int_{0}^t\left(\mathbf{u}, \frac{\partial\varphi}{\partial r}1_{\mathcal{A}}(\omega)\right)dr+\mathrm{E}\int_{D_r}\int_{D_\tau}\mathbf{u}\frac{\partial\psi}{\partial \tau}1_{\mathcal{A}}(\omega)dxdydrd\tau.
\end{align}
Then, utilizing the periodicity of $\tau$, we see
\begin{align}\label{2.9}
\mathrm{E}\int_{D_r}\int_{D_\tau}\mathbf{u}\frac{\partial\psi}{\partial \tau}1_{\mathcal{A}}(\omega)dxdydrd\tau=\mathrm{E}\int_{D_r}1_{\mathcal{A}}(\omega)\mathbf{u}\left(\int_{D_\tau}\frac{\partial\psi}{\partial \tau}d\tau dy\right)drdx=0.
\end{align}
Combining \eqref{2.7}-\eqref{2.9}, we have
\begin{align}\label{2.10}
\mathrm{E}\int_{0}^{t}((\mathbf{u}^\varepsilon)', \Psi^\varepsilon)dr\rightarrow \mathrm{E}\int_{0}^t\left(\mathbf{u}, \frac{\partial\varphi}{\partial r}1_{\mathcal{A}}(\omega)\right)dr=\mathrm{E}\int_{0}^{t}(\mathbf{u}'(r), \varphi1_{\mathcal{A}}(\omega))dr.
\end{align}
Furthermore, for the second term on the right-hand side of \eqref{2.6}, using \eqref{4.4} and \eqref{1.23} we deduce
\begin{align}\label{4.13}
&\mathrm{E}\int_{0}^{t}(A^\varepsilon \mathbf{u}^\varepsilon, \Psi^\varepsilon)_{V'\times V}dr=\mathrm{E}\int_{0}^{t}\sum_{i,j=1}^N\left(a^\varepsilon_{i,j}\frac{\partial\mathbf{u}^\varepsilon}{\partial x_i}, \frac{\partial\Psi^\varepsilon}{\partial x_j}\right)dr\nonumber\\
&=\mathrm{E}\int_{0}^{t}\sum_{i,j=1}^N\left(a^\varepsilon_{i,j}\frac{\partial\mathbf{u}^\varepsilon}{\partial x_i}, \frac{\partial\varphi}{\partial x_j}+\frac{\partial\psi}{\partial y_j}+\varepsilon\frac{\partial\psi}{\partial x_j}\right)1_{\mathcal{A}}(\omega)dr\nonumber\\
&\rightarrow \mathrm{E}\int_{D_r}\int_{D_\tau}\sum_{i,j=1}^Na_{i,j}\left(\frac{\partial \mathbf{u}}{\partial x_i}+\frac{\partial \overline{\mathbf{u}}}{\partial y_i}\right)\left(\frac{\partial \varphi}{\partial x_j}+\frac{\partial \psi}{\partial y_j}\right)1_{\mathcal{A}}(\omega)dxdydrd\tau.
\end{align}

The next point is to pass the limit of nonlinear term $B(\mathbf{u}^\varepsilon, \mathbf{u}^\varepsilon)$.
Using \eqref{2.1}, \eqref{1.23} and the Vitali convergence theorem (see appendix, Theorem \ref{thm4.1}), we have
$$\mathbf{u}^\varepsilon\rightarrow \mathbf{u},~ {\rm in}~ L^2(\Omega; L^2(0,T;H)),$$
as well as the convergence
$$\frac{\partial \mathbf{u}^\varepsilon}{\partial x_i}\rightarrow \frac{\partial \mathbf{u}}{\partial x_i}+\frac{\partial \overline{\mathbf{u}}}{\partial y_i}, ~{\rm in} ~ L^2(\Omega, L^2(0,T;H)), \Sigma\!-\!{\rm weak}, $$
lead to (Lemma \ref{lem4.3})
\begin{align}\label{4.14}
\mathrm{E}\int_{0}^{t}(B(\mathbf{u}^\varepsilon, \mathbf{u}^\varepsilon),\varphi 1_{\mathcal{A}}(\omega))dr\rightarrow \mathrm{E}\int_{D_r}\int_{D_\tau}\sum_{i=1}^N\mathbf{u}_i\left(\frac{\partial \mathbf{u}}{\partial x_i}+\frac{\partial \overline{\mathbf{u}}}{\partial y_i}\right) \varphi1_{\mathcal{A}}(\omega) dxdydrd\tau.
\end{align}
By \eqref{2.3} and \eqref{1.23}, we obtain
\begin{align*}
\mathrm{E}\int_{0}^{t}(B(\mathbf{u}^\varepsilon, \mathbf{u}^\varepsilon), \varepsilon\psi1_{\mathcal{A}}(\omega))dr\nonumber &\leq \varepsilon\mathrm{E}\int_{0}^{t}\|\psi\|_{H^{1,N}}\|B(\mathbf{u}^\varepsilon, \mathbf{u}^\varepsilon)\|_{H^{-1, \frac{N}{N-1}}}dr\nonumber\\
&\leq \varepsilon\|\psi\|_{H^{1,N}}\mathrm{E}\int_{0}^{t}\|\mathbf{u}\|_{H}\|\mathbf{u}\|_{V}dr\leq C\varepsilon,
\end{align*}
giving rise to
\begin{align}\label{4.15}
\mathrm{E}\int_{0}^{t}(B(\mathbf{u}^\varepsilon, \mathbf{u}^\varepsilon),\varepsilon\psi 1_{\mathcal{A}}(\omega))dr\rightarrow 0.
\end{align}
From \eqref{4.14} and \eqref{4.15}, we get
\begin{align*}
\mathrm{E}\int_{0}^{t}(B(\mathbf{u}^\varepsilon, \mathbf{u}^\varepsilon),\left( \varphi+\varepsilon\psi\right)1_{\mathcal{A}}(\omega))dr\rightarrow \mathrm{E}\int_{D_r}\int_{D_\tau}\sum_{i=1}^N\mathbf{u}_i\left(\frac{\partial \mathbf{u}}{\partial x_i}+\frac{\partial \overline{\mathbf{u}}}{\partial y_i}\right) \varphi1_{\mathcal{A}}(\omega) dxdydrd\tau.
\end{align*}
Using the periodicity of $y$, we have
\begin{align*}
&\mathrm{E}\int_{D_r}\int_{D_\tau}\sum_{i=1}^N\mathbf{u}_i\left(\frac{\partial \mathbf{u}}{\partial x_i}+\frac{\partial \overline{\mathbf{u}}}{\partial y_i}\right) \varphi1_{\mathcal{A}}(\omega)  dxdydrd\tau \nonumber\\
&=\mathrm{E}\int_{0}^t(B(\mathbf{u}, \mathbf{u}), \varphi1_{\mathcal{A}}(\omega))dr+\mathrm{E}\int_{D_r}\varphi1_{\mathcal{A}}(\omega)\sum_{i=1}^N\mathbf{u}_i\left(\int_{D_\tau}\frac{\partial \overline{\mathbf{u}}}{\partial y_i}dyd\tau\right)dxdr\nonumber\\
&=\mathrm{E}\int_{0}^{t}(B(\mathbf{u}, \mathbf{u}), \varphi1_{\mathcal{A}}(\omega))dr.
\end{align*}
Hence we obtain
\begin{align}\label{4.16}
\mathrm{E}\int_{0}^{t}(B(\mathbf{u}^\varepsilon, \mathbf{u}^\varepsilon),\left( \varphi+\varepsilon\psi\right)1_{\mathcal{A}}(\omega))dr\rightarrow \mathrm{E}\int_{0}^{t}(B(\mathbf{u}, \mathbf{u}), \varphi1_{\mathcal{A}}(\omega))dr.
\end{align}

We next pass the limit of $F(\mathbf{u}^\varepsilon, \mathfrak{L}_{\mathbf{u}^\varepsilon(t)})$. Since
$$\mathfrak{L}_{\mathbf{u}^\varepsilon(t)}\rightarrow \mathfrak{L}_{\mathbf{u}(t)}, ~{\rm in} ~\mathscr{P}(\mathscr{X}),$$
as a result of \eqref{1.23} and \cite[Thereom 5.5]{CD}, we have
$$\mathfrak{L}_{\mathbf{u}^\varepsilon(t)}\rightarrow \mathfrak{L}_{\mathbf{u}(t)}, ~{\rm~in} ~\mathscr{P}_2(\mathscr{X}).$$
Following from the definition of $p$-Wasserstein distance, we see
$$\mathcal{W}_{2,T,V'}(\mathfrak{L}_{\mathbf{u}^\varepsilon(t)}, \mathfrak{L}_{\mathbf{u}(t)})\rightarrow 0,$$
as $\varepsilon\rightarrow 0$. Using the fact that
$$\mathcal{W}_{2,V'}(\mathfrak{L}_{\mathbf{u}^\varepsilon(t)}, \mathfrak{L}_{\mathbf{u}(t)})\leq \mathcal{W}_{2,T,V'}(\mathfrak{L}_{\mathbf{u}^\varepsilon(t)}, \mathfrak{L}_{\mathbf{u}(t)}),$$
we obtain
$$\mathfrak{L}_{\mathbf{u}^\varepsilon(t)}\rightarrow \mathfrak{L}_{\mathbf{u}(t)}, ~{\rm~in} ~\mathscr{P}_2(V').$$
Then, using the assumption $({\bf F.2})$, \eqref{2.1}, we know $\mathrm{P}$ a.s.
\begin{align*}
\int_{0}^{t}(F(\mathbf{u}^\varepsilon, \mathfrak{L}_{\mathbf{u}^\varepsilon(r)}), \Psi^\varepsilon)dr\rightarrow \int_{0}^{t}(F(\mathbf{u}, \mathfrak{L}_{\mathbf{u}(r)}), \varphi1_{\mathcal{A}}(\omega))dr.
\end{align*}
Finally, the dominated convergence theorem yields
\begin{align}\label{2.14}
\mathrm{E}\int_{0}^{t}(F(\mathbf{u}^\varepsilon, \mathfrak{L}_{\mathbf{u}^\varepsilon(r)}), \Psi^\varepsilon)dr\rightarrow \mathrm{E}\int_{0}^{t}(F(\mathbf{u}, \mathfrak{L}_{\mathbf{u}(r)}), \varphi1_{\mathcal{A}}(\omega))dr.
\end{align}

Combining \eqref{2.6}, \eqref{2.10}, \eqref{4.13}, \eqref{4.16} and \eqref{2.14}, we conclude
\begin{align}\label{2.15}
\mathrm{E}(\mathscr{M}_{\Psi^\varepsilon}^\varepsilon (t)-\mathscr{M}(t))\rightarrow 0, ~{\rm as} ~\varepsilon\rightarrow 0.
\end{align}
Let $\Phi$ be any bounded continuous function on $X\times C([0,T];H_0)$, by \eqref{2.15} and the martingale property,
\begin{align*}
\mathrm{E}\left((\mathscr{M}(t)-\mathscr{M}(s))\Phi((\mathbf{u}, W)|_{[0,s]})\right)=\lim_{\varepsilon\rightarrow 0}\mathrm{E}\left((\mathscr{M}_{\Psi^\varepsilon}^\varepsilon(t)-\mathscr{M}_{\Psi^\varepsilon}^\varepsilon(s))\Phi((\mathbf{u}^\varepsilon, W^\varepsilon)|_{[0,s]})\right)=0.
\end{align*}
From the arbitrariness of $\Phi$, we finally obtain
\begin{align}\label{2.17}
\mathrm{E}(\mathscr{M}(t)|\mathscr{F}_s)=\mathscr{M}(s),
\end{align}
where the  filtration $\{\mathscr{F}_t\}_{t\geq 0}$ is generated by $\sigma\{\mathbf{u}(s), W(s), s\leq t\}$ satisfying the usual conditions.

We proceed to show that
\begin{align}\label{2.18}
\mathrm{E}\left(\mathscr{M}^2(t)-\int_{0}^{t}\|G^*(r,\mathbf{u})\varphi1_{\mathcal{A}}(\omega)\|^2_{H}dr\bigg|\mathscr{F}_s\right)
=\mathscr{M}^2(s)-\int_{0}^{s}\|G^*(r, \mathbf{u})\varphi1_{\mathcal{A}}(\omega)\|^2_{H}dr.
\end{align}
As \eqref{1.9}, we have
\begin{align*}
\mathrm{E}|\mathscr{M}^\varepsilon_{\Phi^\varepsilon}(t)|^p\leq C\mathrm{E}\left|\int_{0}^{T}\sum_{k\geq 1}(G(t, \mathbf{u}^\varepsilon)\mathbf{e}_k, \mathbf{u}^\varepsilon)^2dt\right|^\frac{p}{2}\leq C,
\end{align*}
where the positive constant $C$ is independent of $\varepsilon$. Then, we infer from the Vitali convergence theorem  that
\begin{align}\label{4.24*}\mathrm{E}(\mathscr{M}^\varepsilon_{\Phi^\varepsilon}(t)-\mathscr{M}(t))^2\rightarrow 0, ~{\rm as} ~\varepsilon\rightarrow 0.\end{align}

We next show that
\begin{align}\label{4.26*}\mathrm{E}\left|\int_{0}^{t}\|G^*(r,\mathbf{u}^\varepsilon)\Phi^\varepsilon\|^2_{H}
-\|G^*(r,\mathbf{u})\varphi1_{\mathcal{A}}(\omega)\|^2_{H}dr\right|\rightarrow 0,\end{align}
as $\varepsilon\rightarrow 0$.
Since $G$ is a Hilbert-Schmidt operator in $L_2(H;H)$, we have the adjoint operator $G^*\in L_2(H;H)$ and $\|G\|_{L_2(H;H)}=\|G^*\|_{L_2(H;H)}$.
From assumption $({\bf G.2})$ and \eqref{1.23}, we have
\begin{align}\label{4.27*}\mathrm{E}\left|\int_{0}^{t}\|G^*(r,\mathbf{u}^\varepsilon)\varepsilon\psi1_{\mathcal{A}}(\omega)\|^2_{H}dr\right|&\leq \varepsilon\mathrm{E}\left|\int_{0}^{t}\|\psi\|_{C_{per}(D_\tau)}\|G^*(r,\mathbf{u}^\varepsilon)\|^2_{L_2(H;H)}dr\right|\nonumber\\
&\leq \varepsilon C\|\psi\|_{C_{per}(D_\tau)}\mathrm{E}\int_{0}^{t}(1+\|\mathbf{u}^\varepsilon\|_{H}^2)dr\rightarrow 0,\end{align}
as $\varepsilon\rightarrow 0$. It remains to show that
\begin{align}\label{4.22}\mathrm{E}\left|\int_{0}^{t}\|G^*(r,\mathbf{u}^\varepsilon)\varphi1_{\mathcal{A}}(\omega)\|^2_{H}
-\|G^*(r,\mathbf{u})\varphi1_{\mathcal{A}}(\omega)\|^2_{H}dr\right|\rightarrow 0.\end{align}
By \eqref{2.1} and assumption $({\bf G.1})$, we have $\mathrm{P}$ a.s.
\begin{align*}&\int_{0}^{t}\|(G^*(r,\mathbf{u}^\varepsilon)-G^*(r,\mathbf{u}))\varphi1_{\mathcal{A}}(\omega)\|^2_{H}dr\nonumber\\
&\leq \int_{0}^{t}\|\varphi\|_{C(D_t)}\|G^*(r,\mathbf{u}^\varepsilon)-G^*(r,\mathbf{u})\|^2_{L_2(H;H)}dr\nonumber\\
&\leq C\|\varphi\|_{C(D_t)}\int_{0}^{t}\|\mathbf{u}^\varepsilon-\mathbf{u}\|^2_{H}dr\rightarrow 0.\end{align*}
Again, the dominated convergence theorem yields
\begin{align*}&\mathrm{E}\left|\int_{0}^{t}\|G^*(r,\mathbf{u}^\varepsilon)\varphi1_{\mathcal{A}}(\omega)\|^2_{H}
-\|G^*(r,\mathbf{u})\varphi1_{\mathcal{A}}(\omega)\|^2_{H}dr\right|\nonumber\\
&\leq \mathrm{E}\int_{0}^{t}\|(G^*(r,\mathbf{u}^\varepsilon)-G^*(r,\mathbf{u}))\varphi1_{\mathcal{A}}(\omega)\|^2_{H}dr
\rightarrow 0,\end{align*}
as desired. The limit \eqref{4.26*} is a consequence of \eqref{4.27*} and \eqref{4.22}.

Using \eqref{4.24*} and \eqref{4.26*}, we further obtain
\begin{align}\label{2.18*}
&\mathrm{E}\left(\!\left(\left(\mathscr{M}^\varepsilon_{\Phi^\varepsilon}(t)\right)^2-\left(\mathscr{M}^\varepsilon_{\Phi^\varepsilon}(s)\right)^2
-\int_{0}^{t}\|G^*(r,\mathbf{u}^\varepsilon)\Phi^\varepsilon\|^2_{H}dr
+\int_{0}^{s}\|G^*(r,\mathbf{u}^\varepsilon)\Phi^\varepsilon\|^2_{H}dr\right)\Phi((\mathbf{u}^\varepsilon, W^\varepsilon)|_{[0,s]})\right)\nonumber\\
&\rightarrow\mathrm{E}\left(\!\left(\mathscr{M}^2(t)-\mathscr{M}^2(s)\!-\!\int_{0}^{t}\|G^*(r,\mathbf{u})\varphi1_{\mathcal{A}}(\omega)\|^2_{H}dr
\!+\!\int_{0}^{s}\|G^*(r,\mathbf{u})\varphi1_{\mathcal{A}}(\omega)\|^2_{H}dr\right)\!\Phi((\mathbf{u}, W)|_{[0,s]})\!\right).
\end{align}
By the martingale property, we deduce
$$\mathrm{E}\left(\!\left(\left(\mathscr{M}^\varepsilon_{\Phi^\varepsilon}(t)\right)^2\!-\!\left(\mathscr{M}^\varepsilon_{\Phi^\varepsilon}(s)\right)^2
\!-\!\int_{0}^{t}\|G^*(r,\mathbf{u}^\varepsilon)\Phi^\varepsilon\|^2_{H}dr
\!+\!\int_{0}^{s}\|G^*(r,\mathbf{u}^\varepsilon)\Phi^\varepsilon\|^2_{H}dr\right)\!\Phi((\mathbf{u}^\varepsilon, W^\varepsilon)|_{[0,s]})\right)=0,$$
then, by \eqref{2.18*} we further obtain
$$\mathrm{E}\left(\!\left(\mathscr{M}^2(t)\!-\!\mathscr{M}^2(s)\!-\!\int_{0}^{t}\|G^*(r,\mathbf{u})\varphi1_{\mathcal{A}}(\omega)\|^2_{H}dr
\!+\!\int_{0}^{s}\|G^*(r,\mathbf{u})\varphi1_{\mathcal{A}}(\omega)\|^2_{H}dr\right)\!\Phi((\mathbf{u}, W)|_{[0,s]})\right)=0.$$
The arbitrariness of $\Phi$ yields \eqref{2.18}. By \eqref{2.17}, \eqref{2.18} and \eqref{1.23}, the martingale representative theorem implies that
\begin{align*}
\mathscr{M}(t)=\int_{0}^{t}(G(r,\mathbf{u}), \varphi1_{\mathcal{A}}(\omega))dW.
\end{align*}
By the density of $C^\infty(D_t)\times 1_{\cdot}(\omega),  C^\infty(D_t)\times C^\infty_{per}( D_\tau) \times 1_{\cdot}(\omega)$ in $L^2(\Omega; \widetilde{\mathbb{X}})$, we complete the proof of Proposition \ref{pro2.1}.
\end{proof}

We will use the following modified Yamada-Watanabe theorem given by \cite[Lemma 2.1]{HW} to establish the well-posedness of strong solution to equations \eqref{4.3}.
\begin{proposition}\label{pro4.2} Suppose that the distribution-dependent stochastic differential equation
\begin{align}\label{dd1}
dX(t)=b(t, X(t), \mathfrak{L}_{X(t)})dt+h(X(t))dW,
\end{align}
has a weak solution $X(t)$ under the probability measure $\mathcal{P}$, let $\mu(t)=\mathfrak{L}_{X(t)}|_{\mathcal{P}}$.  Moreover, if the stochastic differential equation (not distribution-dependent)
\begin{align*}
dX(t)=b(t, X(t), \mu)dt+h(X(t))dW,
\end{align*}
has the pathwise uniqueness for the initial $X(0)$ with $\mathfrak{L}_{X(0)}=\mu(0)$. Then, the equation \eqref{dd1} admits a strong solution with initial data $X(0)$.
\end{proposition}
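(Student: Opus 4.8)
The plan is to eliminate the distribution dependence by \emph{freezing} the time-marginal law along the given weak solution, thereby reducing \eqref{dd1} to a classical It\^o equation, then invoke the ordinary Yamada--Watanabe theorem for that frozen equation, and finally close the loop by checking that the strong solution so produced carries exactly the frozen marginals. Concretely, let $X$ be the weak solution of \eqref{dd1} on the basis $(\Omega,\mathcal F,\{\mathcal F_t\},\mathcal P,W)$ and put $\mu(t):=\mathfrak L_{X(t)}|_{\mathcal P}$, which is a fixed, deterministic curve of probability measures. Since $\mathfrak L_{X(t)}=\mu(t)$ identically, the equation satisfied by $X$ reads
\begin{align*}
dX(t)=b(t,X(t),\mu(t))\,dt+h(X(t))\,dW,
\end{align*}
so $(X,W)$ is a weak solution of the time-inhomogeneous, \emph{non}-distribution-dependent equation $dY=b(t,Y,\mu(t))\,dt+h(Y)\,dW$ with initial law $\mu(0)$.

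Hence the frozen equation possesses at least one weak solution with initial law $\mu(0)$, and by hypothesis it enjoys pathwise uniqueness for such initial data. I would then apply the classical Yamada--Watanabe theorem (in the variational/Gelfand-triple formulation appropriate to locally monotone coefficients): weak existence together with pathwise uniqueness yields strong existence \emph{and} uniqueness in law for the frozen equation. In particular there is a measurable solution functional $\Phi$ such that, on the original stochastic basis (where $W$ is independent of $\mathcal F_0\ni X(0)$), the process $\widehat X:=\Phi(X(0),W(\cdot))$ is adapted to the $\mathcal P$-augmentation of $\sigma(X(0))\vee\sigma(W_s:s\le t)$ and solves $d\widehat X=b(t,\widehat X,\mu(t))\,dt+h(\widehat X)\,dW$ with $\widehat X(0)=X(0)$, uniquely in the pathwise sense.

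It remains to verify the self-consistency $\mathfrak L_{\widehat X(t)}=\mu(t)$ for every $t$, for then $b(t,\widehat X(t),\mu(t))=b(t,\widehat X(t),\mathfrak L_{\widehat X(t)})$ and $\widehat X$ in fact satisfies \eqref{dd1}. But $X$ and $\widehat X$ are both weak solutions of one and the same frozen equation with the same initial law $\mu(0)$, so the uniqueness-in-law conclusion of Yamada--Watanabe forces $\mathfrak L_{\widehat X(t)}=\mathfrak L_{X(t)}=\mu(t)$ for all $t$. Therefore $\widehat X$ is a solution of \eqref{dd1} that is adapted to the augmented filtration generated by $X(0)$ and $W$, i.e.\ a strong solution with initial data $X(0)$, which is the assertion.

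The main obstacle is exactly the last step, the fixed-point identity $\mathfrak L_{\widehat X}=\mu$: without the uniqueness-in-law part of Yamada--Watanabe --- which itself rests on the assumed pathwise uniqueness of the frozen equation --- the constructed $\Phi(X(0),W)$ need not reproduce the prescribed marginals and would fail to solve the McKean--Vlasov equation \eqref{dd1}. A secondary technical point is that the frozen coefficient $b(t,\cdot,\mu(t))$ must inherit enough joint measurability in $(t,\cdot)$ and progressive measurability for the classical theorem to apply; this is automatic once $\mu(\cdot)$ is Borel measurable (which follows from the weak continuity of $t\mapsto\mathfrak L_{X(t)}$) and $b$ is continuous in the measure argument in the relevant Wasserstein topology, as is the case in the applications to \eqref{4.3}.
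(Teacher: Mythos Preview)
Your argument is correct and is essentially the standard proof of this modified Yamada--Watanabe principle for McKean--Vlasov equations: freeze the marginal flow $\mu(\cdot)$ furnished by the given weak solution, apply the classical Yamada--Watanabe theorem to the resulting ordinary SDE, and recover self-consistency of the law via uniqueness in law. Note, however, that the paper does not supply its own proof of this proposition; it is quoted verbatim from \cite[Lemma~2.1]{HW} and used as a black box to pass from the martingale solution of \eqref{4.3} to a probabilistically strong one. So there is nothing to compare against in the paper itself --- your write-up is in fact a faithful sketch of the argument one finds in the cited reference.
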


{\bf Pathwise uniqueness}. For the case of $N=2$, we have the following pathwise uniqueness of equations \eqref{4.3}. We emphasize that for the Allen-Cahn equations, the uniqueness holds for $N=2,3$.
\begin{proposition}\label{pro4.3} Suppose that $\mathbf{u}_1=(\mathbf{u}_{1,*},\mathbf{u}_{1,**}), \mathbf{u}_2=(\mathbf{u}_{2,*}, \mathbf{u}_{2,**})$ are two solutions of equations \eqref{4.3} with the same initial data $\mathbf{u}_1(0) =\mathbf{u}_2(0)$, then the pathwise uniqueness holds in the sense:
$$\mathrm{P}\{\mathbf{u}_1(t)=\mathbf{u}_2(t),~ {\rm for~ all}~ t>0\}=1.$$
\end{proposition}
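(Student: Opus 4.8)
The plan is to run the classical energy/Gronwall argument at the level of the coupled pair $(\mathbf{u},\overline{\mathbf{u}})$. First I would set $\mathbf{w}=\mathbf{u}_1-\mathbf{u}_2=(\mathbf{w}_*,\mathbf{w}_{**})$ with $\mathbf{w}_*=\mathbf{u}_{1,*}-\mathbf{u}_{2,*}$, $\mathbf{w}_{**}=\mathbf{u}_{1,**}-\mathbf{u}_{2,**}$, subtract the two copies of the variational equation \eqref{4.3}, and test with $\varphi=\mathbf{w}_*$, $\psi=\mathbf{w}_{**}$. Applying the It\^o formula to $\|\mathbf{w}_*(t)\|_H^2$ then yields, $\mathrm{P}$ a.s.,
\begin{align*}
\frac{1}{2}d\|\mathbf{w}_*\|_H^2
&+\sum_{i,j=1}^N\int_{D}\int_{D_\tau}a_{i,j}\Bigl(\frac{\partial\mathbf{w}_*}{\partial x_i}+\frac{\partial\mathbf{w}_{**}}{\partial y_i}\Bigr)\Bigl(\frac{\partial\mathbf{w}_*}{\partial x_j}+\frac{\partial\mathbf{w}_{**}}{\partial y_j}\Bigr)\,dx\,dy\,d\tau\, dt\\
&=-(B(\mathbf{u}_1,\mathbf{u}_1)-B(\mathbf{u}_2,\mathbf{u}_2),\mathbf{w}_*)\,dt
+(F(\mathbf{u}_1,\mathfrak{L}_{\mathbf{u}_1(t)})-F(\mathbf{u}_2,\mathfrak{L}_{\mathbf{u}_2(t)}),\mathbf{w}_*)\,dt\\
&\quad+\bigl((G(t,\mathbf{u}_1)-G(t,\mathbf{u}_2))dW,\mathbf{w}_*\bigr)
+\frac{1}{2}\|G(t,\mathbf{u}_1)-G(t,\mathbf{u}_2)\|_{L_2(H,H)}^2\,dt.
\end{align*}
By the uniform ellipticity \eqref{1.2}, the quadratic form on the left is bounded below by $\kappa\int_D\int_{D_\tau}|\nabla_x\mathbf{w}_*+\nabla_y\mathbf{w}_{**}|^2\,dx\,dy\,d\tau\ge 0$; in fact, after an integration-by-parts in $y$ using $D_\tau$-periodicity, the cross terms involving $\overline{\mathbf{u}}$ are nonnegative and can either be discarded or used to absorb a piece of $\|\mathbf{w}_*\|_V^2$. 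The key point is that this term controls (a multiple of) $\kappa\|\mathbf{w}_*\|_V^2$ up to the nonnegative corrector contribution.

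Next I would estimate the three drift terms on the right. For the nonlinearity, the local monotonicity bound \eqref{2.4} gives $(B(\mathbf{u}_1,\mathbf{u}_1)-B(\mathbf{u}_2,\mathbf{u}_2),\mathbf{w}_*)\le C\|\mathbf{w}_*\|_H\|\mathbf{u}_2\|_V\|\mathbf{w}_*\|_V$, and Young's inequality turns this into $\le \frac{\kappa}{8}\|\mathbf{w}_*\|_V^2+C\|\mathbf{u}_2\|_V^2\|\mathbf{w}_*\|_H^2$ (this is exactly where $N=2$ is used — in 2D the exponent on $\|\mathbf{u}_2\|_V$ is quadratic, hence $L^2$-in-time integrable by \eqref{1.23}; for the Allen-Cahn equations $B\equiv 0$, so the restriction disappears and $N=3$ is allowed). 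For $F$, assumption $(\mathbf{F.3})$ gives
\begin{align*}
(F(\mathbf{u}_1,\mu_1)-F(\mathbf{u}_2,\mu_2),\mathbf{w}_*)\le C(1+\mu_1(\|\cdot\|_H^a)+\mu_2(\|\cdot\|_H^a))\|\mathbf{w}_*\|_H^2+\eta\|\mathbf{w}_*\|_V^2+\ell\,\mathcal{W}_{2,H}(\mu_1,\mu_2),
\end{align*}
where $\mu_i=\mathfrak{L}_{\mathbf{u}_i(t)}$. Here I use the standard bound $\mathcal{W}_{2,H}(\mathfrak{L}_{\mathbf{u}_1(t)},\mathfrak{L}_{\mathbf{u}_2(t)})^2\le \mathrm{E}\|\mathbf{w}_*(t)\|_H^2$, and note that $\mu_i(\|\cdot\|_H^a)=\mathrm{E}\|\mathbf{u}_i(t)\|_H^a$ is bounded in time by \eqref{1.23}. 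For $G$, assumption $(\mathbf{G.1})$ bounds the It\^o correction by $C\|\mathbf{w}_*\|_H^2$. Since $\eta<\frac{\kappa}{2}$, after collecting the $\frac{\kappa}{8}\|\mathbf{w}_*\|_V^2$ from $B$ and $\eta\|\mathbf{w}_*\|_V^2$ from $F$, the gradient terms are absorbed by the left-hand side, leaving
\begin{align*}
d\|\mathbf{w}_*\|_H^2+\Bigl(\kappa-2\eta-\tfrac{\kappa}{4}\Bigr)\|\mathbf{w}_*\|_V^2\,dt
\le C\,r(t)\,\|\mathbf{w}_*\|_H^2\,dt+2\ell\,\mathcal{W}_{2,H}(\mu_1,\mu_2)\,dt+dM(t),
\end{align*}
where $r(t)=1+\|\mathbf{u}_2\|_V^2+\mathrm{E}\|\mathbf{u}_1\|_H^a+\mathrm{E}\|\mathbf{u}_2\|_H^a\in L^1(0,T)$ $\mathrm{P}$ a.s., and $M$ is the local martingale from the $G$-term.

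Finally I would take expectations. Let $g(t):=\mathrm{E}\|\mathbf{w}_*(t)\|_H^2$. Using $\mathrm{E}\,\mathcal{W}_{2,H}(\mu_1,\mu_2)\le \mathcal{W}_{2,H}(\mu_1,\mu_2)$ is not quite what is needed — rather, $\mathcal{W}_{2,H}(\mathfrak{L}_{\mathbf{u}_1(t)},\mathfrak{L}_{\mathbf{u}_2(t)})^2\le g(t)$ directly, and since $\ell<\frac{\kappa-2\eta}{2}$ the $\ell$-term is dominated by a fraction of what remains on the left (or simply moved into the Gronwall factor after using $\mathcal{W}_{2,H}\le g(t)^{1/2}\le 1+g(t)$). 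A subtlety is that one must handle the martingale term honestly: either argue directly in expectation (the stochastic integral has zero expectation once a localization/Fatou argument is in place, since $\|\mathbf{w}_*\|_H$ is a.s. bounded on $[0,T]$ by \eqref{1.23}), or apply the BDG inequality as in \eqref{1.9} to control $\mathrm{E}\sup_{[0,t]}|M|$. Either way one arrives at $g(t)\le \int_0^t C(1+\mathrm{E}\|\mathbf{u}_2(s)\|_V^2+\cdots)\,g(s)\,ds$ with $g(0)=0$ and an $L^1$ integrand, so Gronwall's lemma forces $g\equiv 0$, i.e. $\mathbf{w}_*(t)=0$ for a.e. $t$, $\mathrm{P}$ a.s. Pathwise continuity of $\mathbf{u}_i$ in $V'$ (Lemma 2.1(iii)) upgrades this to "for all $t$", giving $\mathbf{u}_{1,*}=\mathbf{u}_{2,*}$; plugging back into the cell problem then pins down $\overline{\mathbf{u}}$ (hence $\mathbf{w}_{**}$) uniquely, completing the proof.

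\textbf{Main obstacle.} The delicate point is the interplay between the three "bad" gradient-producing terms — the convective term (which in 3D would need $\|\mathbf{u}_2\|_V^4$, not integrable, hence the $N=2$ restriction, dropped for Allen–Cahn), the $\eta\|\mathbf{w}\|_V^2$ from $F$, and the corrector cross-terms in the ellipticity form — all of which must be simultaneously absorbed by the single coercive term $\kappa\int|\nabla_x\mathbf{w}_*+\nabla_y\mathbf{w}_{**}|^2$. Getting the constants to close, i.e. verifying that the assumptions $\eta<\kappa/2$ and $\ell<(\kappa-2\eta)/2$ leave enough room, is the crux; once that bookkeeping is done the Gronwall conclusion is routine.
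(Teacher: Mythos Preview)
Your overall strategy matches the paper's: subtract, test with $(\mathbf{w}_*,\mathbf{w}_{**})$, use ellipticity of the $a_{i,j}$-form together with $y$-periodicity to obtain coercivity in $\mathbb{X}$, then control $B$ via \eqref{2.4}, $F$ via $(\mathbf{F.3})$, $G$ via $(\mathbf{G.1})$, and finally recover $\mathbf{w}_{**}=0$ from $\mathbf{w}_*=0$.

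The gap is in the closing Gronwall step. Your differential inequality carries the \emph{random} coefficient $\|\mathbf{u}_2(t)\|_V^2$ (from the $B$-estimate); after taking expectation you face $\mathrm{E}\bigl[\|\mathbf{u}_2\|_V^2\,\|\mathbf{w}_*\|_H^2\bigr]$, which does not factor as $\mathrm{E}\|\mathbf{u}_2\|_V^2\cdot g(t)$, so the line ``$g(t)\le \int_0^t C(1+\mathrm{E}\|\mathbf{u}_2\|_V^2+\cdots)\,g(s)\,ds$'' is unjustified. Neither BDG nor zero-mean of the martingale touches this, since the obstruction sits in the drift; and a stopping-time localisation is awkward here because the McKean--Vlasov term $\mathcal{W}_{2,H}(\mu_1,\mu_2)$ involves the full, unstopped law. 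The paper resolves this by multiplying by the random weight $\exp\bigl(-\int_0^t J(s)\,ds\bigr)$ with $J(s)=C\bigl(1+\|\mathbf{u}_{1,*}(s)\|_V^2+\mathrm{E}\|\mathbf{u}_{1,*}\|_H^a+\mathrm{E}\|\mathbf{u}_{2,*}\|_H^a\bigr)$ and applying the It\^o product formula \emph{before} taking expectation; the $-J(s)\|\mathbf{w}_*\|_H^2$ term exactly cancels the random Gronwall coefficient pathwise, leaving an inequality in which the residual distribution term $2\ell\,\mathrm{E}\|\mathbf{w}_*\|_H^2$ is absorbed by $(\kappa-2\eta)\|\mathbf{w}\|_{\mathbb{X}}^2$ thanks to $\ell<(\kappa-2\eta)/2$. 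Note also that your alternative treatment of the $\ell$-term --- ``$\mathcal{W}_{2,H}\le g^{1/2}\le 1+g$ into the Gronwall factor'' --- would introduce a stray additive constant and hence cannot yield $g\equiv 0$; this term must be absorbed by the coercive part, not by Gronwall.
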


\begin{proof} Taking $\mathscr{H}=(\varphi,\psi)\in L^2(\Omega;\widetilde{\mathbb{X}})$, the difference of $\mathbf{u}_{1,*}, \mathbf{u}_{2,*}$ satisfies
\begin{align*}
&(\mathbf{u}'_{1,*}-\mathbf{u}'_{2,*}, \varphi)\nonumber\\
=&-\int_{D}\int_{D_\tau}\sum_{i,j=1}^Na_{i,j}\left(\frac{\partial (\mathbf{u}_{1,*}-\mathbf{u}_{2,*})}{\partial x_i}+\frac{\partial (\mathbf{u}_{1,**}-\mathbf{u}_{2,**})}{\partial y_i}\right)\left(\frac{\partial \varphi}{\partial x_j}+\frac{\partial \psi}{\partial y_j}\right)dxdyd\tau\nonumber\\
&-(B(\mathbf{u}_{1,*}, \mathbf{u}_{1,*})-B(\mathbf{u}_{2,*}, \mathbf{u}_{2,*}), \varphi)\nonumber\\
&+(F(\mathbf{u}_{1,*}, \mathfrak{L}_{\mathbf{u}_{1,*}(t)})-F(\mathbf{u}_{2,*}, \mathfrak{L}_{\mathbf{u}_{2,*}(t)}), \varphi)\nonumber\\
&+(G(t, \mathbf{u}_{1,*})-G(t, \mathbf{u}_{2,*}), \varphi)\frac{dW}{dt}.
\end{align*}
Choosing $\varphi=\mathbf{u}_{1,*}-\mathbf{u}_{2,*}, \psi=\mathbf{u}_{1,**}-\mathbf{u}_{2,**}$ in above, by \eqref{2.1*} we have
\begin{align}\label{4.23}
&\frac{d}{dt}\frac{\|\mathbf{u}_{1,*}-\mathbf{u}_{2,*}\|_{H}^2}{2}\nonumber\\
&+\sum_{i,j=1}^N\int_{D}\int_{D_\tau}a_{i,j}(\partial_{x_i} (\mathbf{u}_{1,*}-\mathbf{u}_{2,*})+\partial_{y_i} (\mathbf{u}_{1,**}-\mathbf{u}_{2,**}))\nonumber\\
&\qquad\qquad\qquad\cdot(\partial_{x_j} (\mathbf{u}_{1,*}-\mathbf{u}_{2,*})+\partial_{y_j} (\mathbf{u}_{1,**}-\mathbf{u}_{2,**}))dxdyd\tau\nonumber\\
&=-(B(\mathbf{u}_{1,*}-\mathbf{u}_{2,*}, \mathbf{u}_{1,*}), \mathbf{u}_{1,*}-\mathbf{u}_{2,*})
+(F(\mathbf{u}_{1,*}, \mathfrak{L}_{\mathbf{u}_{1,*}(t)})-F(\mathbf{u}_{2,*}, \mathfrak{L}_{\mathbf{u}_{2,*}(t)}), \mathbf{u}_{1,*}-\mathbf{u}_{2,*})\nonumber\\
&\quad+(G(t, \mathbf{u}_{1,*})-G(t, \mathbf{u}_{2,*}), \mathbf{u}_{1,*}-\mathbf{u}_{2,*})\frac{dW}{dt}\nonumber\\
&\quad+\frac{1}{2}\|G(t, \mathbf{u}_{1,*})-G(t, \mathbf{u}_{2,*})\|_{L_2(H;H)}^2.
\end{align}
Using the fact $\int_{D^N}\frac{\partial \mathbf{u}_{i,**}}{\partial y}dy=0, i=1,2$ (the periodicity of $y$), we have
\begin{align*}
&\int_{D}\int_{D_\tau}a_{i,j}(\partial_{x_i} (\mathbf{u}_{1,*}-\mathbf{u}_{2,*})+\partial_{y_i} (\mathbf{u}_{1,**}-\mathbf{u}_{2,**}))(\partial_{x_j} (\mathbf{u}_{1,*}-\mathbf{u}_{2,*})+\partial_{y_j} (\mathbf{u}_{1,**}-\mathbf{u}_{2,**}))dxdyd\tau\nonumber\\
&=\int_{D}\int_{D_\tau}a_{i,j}\partial_{x_i} (\mathbf{u}_{1,*}-\mathbf{u}_{2,*})\partial_{x_j} (\mathbf{u}_{1,*}-\mathbf{u}_{2,*})dxdyd\tau\nonumber\\
&\quad+\int_{D}\int_{D_\tau}a_{i,j}\partial_{y_i} (\mathbf{u}_{1,**}-\mathbf{u}_{2,**})\partial_{y_j} (\mathbf{u}_{1,**}-\mathbf{u}_{2,**})dxdyd\tau\nonumber\\
&\quad+\int_{D}\int_{D_\tau}a_{i,j}\partial_{x_i} (\mathbf{u}_{1,*}-\mathbf{u}_{2,*})\partial_{y_j} (\mathbf{u}_{1,**}-\mathbf{u}_{2,**})dxdyd\tau\nonumber\\
&\quad+\int_{D}\int_{D_\tau}a_{i,j}\partial_{x_j} (\mathbf{u}_{1,*}-\mathbf{u}_{2,*})\partial_{y_i} (\mathbf{u}_{1,**}-\mathbf{u}_{2,**})dxdyd\tau\nonumber\\
&\geq \int_{D}\int_{D_\tau}a_{i,j}\partial_{x_i} (\mathbf{u}_{1,*}-\mathbf{u}_{2,*})\partial_{x_j} (\mathbf{u}_{1,*}-\mathbf{u}_{2,*})dxdyd\tau\nonumber\\
&\quad+\int_{D}\int_{D_\tau}a_{i,j}\partial_{y_i} (\mathbf{u}_{1,**}-\mathbf{u}_{2,**})\partial_{y_j} (\mathbf{u}_{1,**}-\mathbf{u}_{2,**})dxdyd\tau\nonumber\\
&\quad+\kappa\int_{D}\partial_{x_i} (\mathbf{u}_{1,*}-\mathbf{u}_{2,*})\int_{D_\tau}\partial_{y_j} (\mathbf{u}_{1,**}-\mathbf{u}_{2,**})dxdyd\tau\nonumber\\
&\quad+\kappa\int_{D}\partial_{x_j} (\mathbf{u}_{1,*}-\mathbf{u}_{2,*})\int_{D_\tau}\partial_{y_i} (\mathbf{u}_{1,**}-\mathbf{u}_{2,**})dxdyd\tau\nonumber\\
&=\int_{D}\int_{D_\tau}a_{i,j}\partial_{x_i} (\mathbf{u}_{1,*}-\mathbf{u}_{2,*})\partial_{x_j} (\mathbf{u}_{1,*}-\mathbf{u}_{2,*})dxdyd\tau\nonumber\\
&\quad+\int_{D}\int_{D_\tau}a_{i,j}\partial_{y_i} (\mathbf{u}_{1,**}-\mathbf{u}_{2,**})\partial_{y_j} (\mathbf{u}_{1,**}-\mathbf{u}_{2,**})dxdyd\tau,
\end{align*}
which along with assumption \eqref{1.2} gives that
\begin{align}
&\sum_{i,j=1}^N\int_{D}\int_{D_\tau}a_{i,j}(\partial_{x_i} (\mathbf{u}_{1,*}-\mathbf{u}_{2,*})+\partial_{y_i} (\mathbf{u}_{1,**}-\mathbf{u}_{2,**}))\nonumber\\
&\qquad\qquad\qquad\cdot(\partial_{x_j} (\mathbf{u}_{1,*}-\mathbf{u}_{2,*})+\partial_{y_j} (\mathbf{u}_{1,**}-\mathbf{u}_{2,**}))dxdyd\tau\nonumber\\
&\geq \kappa\|\mathbf{u}_1-\mathbf{u}_2\|_{\mathbb{X}}^2.
\end{align}
By \eqref{2.4} and the Young inequality, we have
\begin{align}
-(B(\mathbf{u}_{1,*}-\mathbf{u}_{2,*}, \mathbf{u}_{1,*}), \mathbf{u}_{1,*}-\mathbf{u}_{2,*})&\leq C\|\mathbf{u}_{1,*}-\mathbf{u}_{2,*}\|_{H}\|\mathbf{u}_{1,*}-\mathbf{u}_{2,*}\|_{V}\|\mathbf{u}_{1,*}\|_{V}\nonumber\\
&\leq C\|\mathbf{u}_{1,*}\|_{V}^2\|\mathbf{u}_{1,*}-\mathbf{u}_{2,*}\|_{H}^2+\frac{\kappa}{2}\|\mathbf{u}_{1,*}-\mathbf{u}_{2,*}\|_{V}^2.
\end{align}
The assumption {\bf (F.3)} leads to
\begin{align}\label{4.26}
&(F(\mathbf{u}_{1,*}, \mathfrak{L}_{\mathbf{u}_{1,*}(t)})-F(\mathbf{u}_{2,*}, \mathfrak{L}_{\mathbf{u}_{2,*}(t)}), \mathbf{u}_{1,*}-\mathbf{u}_{2,*})\nonumber\\
&\leq C(1+\mathrm{E}\|\mathbf{u}_{1,*}\|^a_{H}+\mathrm{E}\|\mathbf{u}_{2,*}\|^a_{H})
\|\mathbf{u}_{1,*}-\mathbf{u}_{2,*}\|_{H}^2\nonumber\\&\quad+\ell \mathcal{W}_{2, H}(\mathfrak{L}_{\mathbf{u}_{1,*}(t)}, \mathfrak{L}_{\mathbf{u}_{2,*}(t)})+\eta\|\mathbf{u}_{1,*}-\mathbf{u}_{2,*}\|_{V}^2\nonumber\\
&\leq C(1+\mathrm{E}\|\mathbf{u}_{1,*}\|^a_{H}+\mathrm{E}\|\mathbf{u}_{2,*}\|^a_{H})
\|\mathbf{u}_{1,*}-\mathbf{u}_{2,*}\|_{H}^2
\nonumber\\&\quad+\ell\mathrm{E}\|\mathbf{u}_{1,*}-\mathbf{u}_{2,*}\|_{H}^2+\eta\|\mathbf{u}_{1,*}-\mathbf{u}_{2,*}\|_{V}^2.
\end{align}

Considering \eqref{4.23}-\eqref{4.26}, we obtain
\begin{align}\label{4.27}
&\frac{d}{dt}\|\mathbf{u}_{1,*}-\mathbf{u}_{2,*}\|_{H}^2+(\kappa-2\eta)\|\mathbf{u}_1-\mathbf{u}_2\|_{\mathbb{X}}^2\nonumber\\
&\leq C(\|\mathbf{u}_{1,*}\|_{V}^2+
\mathrm{E}\|\mathbf{u}_{1,*}\|^a_{H}+\mathrm{E}\|\mathbf{u}_{2,*}\|^a_{H}+1)
\|\mathbf{u}_{1,*}-\mathbf{u}_{2,*}\|_{H}^2\nonumber\\&\quad+2\ell\mathrm{E}\|\mathbf{u}_{1,*}-\mathbf{u}_{2,*}\|_{H}^2
+2(G( t, \mathbf{u}_{1,*})-G( t, \mathbf{u}_{2,*}), \mathbf{u}_{1,*}-\mathbf{u}_{2,*})\frac{dW}{dt}.
\end{align}

Define the function
$$J(t)=C(\mathrm{E}\|\mathbf{u}_{1,*}\|^a_{H}+\mathrm{E}\|\mathbf{u}_{2,*}\|^a_{H}+\|\mathbf{u}_{1,*}\|_{V}^2+1),$$
then using the It\^o product formula to the function
$${\rm exp}\left(\int_{0}^{t}-J(s)ds\right)\left(\|\mathbf{u}_{1,*}-\mathbf{u}_{2,*}\|_{H}^2\right),$$
by \eqref{4.27} we have
\begin{align}\label{4.33}
&d{\rm exp}\left(\int_{0}^{t}-J(s)ds\right)\left(\|\mathbf{u}_{1,*}-\mathbf{u}_{2,*}\|_{H}^2\right)\nonumber\\
&={\rm exp}\left(\int_{0}^{t}-J(s)ds\right)d\|\mathbf{u}_{1,*}-\mathbf{u}_{2,*}\|_{H}^2-J(s){\rm exp}\left(\int_{0}^{t}-J(s)ds\right)\left(\|\mathbf{u}_{1,*}-\mathbf{u}_{2,*}\|_{H}^2\right)dt\nonumber\\
&\leq {\rm exp}\left(\int_{0}^{t}-J(s)ds\right)\left(-(\kappa-2\eta)\|\mathbf{u}_1-\mathbf{u}_2\|_{\mathbb{X}}^2
+2\ell\mathrm{E}\|\mathbf{u}_{1,*}-\mathbf{u}_{2,*}\|_{H}^2\right)dt\nonumber\\
&\quad +2{\rm exp}\left(\int_{0}^{t}-J(s)ds\right)(G(t, \mathbf{u}_{1,*})-G(t, \mathbf{u}_{2,*}), \mathbf{u}_{1,*}-\mathbf{u}_{2,*})dW.
\end{align}
From \eqref{1.23**}, we know for any $t>0$
$$0<\int_{0}^{t}J(s)ds<\infty,~ {\rm P} ~{\rm a.s.}$$
then, integrating of time and taking expectation, by \eqref{4.33} we have
\begin{align*}
&\mathrm{E}\left({\rm exp}\left(\int_{0}^{t}-J(s)ds\right)\left(\|\mathbf{u}_{1,*}-\mathbf{u}_{2,*}\|_{H}^2\right)\right)\nonumber\\
&\leq\mathrm{E} \int_{0}^{t}{\rm exp}\left(\int_{0}^{s}-J(r)dr\right)\left(-(\kappa-2\eta)\|\mathbf{u}_1-\mathbf{u}_2\|_{\mathbb{X}}^2+2\ell\mathrm{E}\|\mathbf{u}_{1,*}-\mathbf{u}_{2,*}\|_{H}^2\right)ds\nonumber\\
&\leq \mathrm{E} \int_{0}^{t}\left(-(\kappa-2\eta)\|\mathbf{u}_1-\mathbf{u}_2\|_{\mathbb{X}}^2+2\ell\mathrm{E}\|\mathbf{u}_{1,*}-\mathbf{u}_{2,*}\|_{H}^2\right)ds\nonumber\\
&\leq 0.
\end{align*}
Note that we also used the martingale property to cancel the noise part.

Finally, we get $\mathbf{u}_{1,*}=\mathbf{u}_{2,*}$, P a.s. Furthermore, using \eqref{4.27}, we have $\mathbf{u}_{1,**}=\mathbf{u}_{2,**}$, P a.s. This completes the proof.
\end{proof}

Invoked by Proposition \ref{pro4.2}, we also require the strong uniqueness of the following decoupled stochastic abstract fluid models
\begin{align}\label{4.30}
(\mathbf{u}'(t), \varphi)=&-\sum_{i,j=1}^2\int_{D}\int_{D_\tau}a_{i,j}\left(\frac{\partial \mathbf{u}}{\partial x_i}+\frac{\partial \overline{\mathbf{u}}}{\partial y_i}\right)\left(\frac{\partial \varphi}{\partial x_j}+\frac{\partial \psi}{\partial y_j}\right)dxdyd\tau-(B(\mathbf{u}, \mathbf{u}), \varphi)\nonumber\\
&+(F^\mu(\mathbf{u}), \varphi)+(G(t, \mathbf{u}), \varphi)\frac{dW}{dt},
\end{align}
where $F^\mu(\mathbf{u})$ is denoted by $F(\mathbf{u}, \mu)$.

Using the assumptions imposed on $B, F, G$ and the coefficients $a_{i,j}$, by a same argument as Proposition \ref{pro4.3}, we also have the strong uniqueness of equations \eqref{4.30}. Then, the modified Yamada-Watanabe theorem implies that the solution $(\mathbf{u}, \overline{\mathbf{u}})$ of equations \eqref{4.3} is unique, which is strong in the sense of probability.

{\bf Recover the representation of $\overline{\mathbf{u}}$}. We are going to give the specific expression of the function $\overline{\mathbf{u}}$.

\begin{lemma} The extension part $\overline{\mathbf{u}}$ is given by
\begin{align*}
\overline{\mathbf{u}}(x,t,y,\tau)=-\sum_{i,k=1}^N \frac{\partial \mathbf{u}}{\partial x_i}(x,t)\eta_{i,k}(y, \tau),~ \mathrm{P} ~a.s.
\end{align*}
where $\eta_{i,k}$ is the solution of variational problem
\begin{eqnarray}\label{var1}
\rho(\eta_{i,k}, \mathbf{w})=\sum_{j=1}^N \int_{D_\tau}a_{i,j}\frac{\partial\mathbf{w}^k}{\partial y_j}dy d\tau,
\end{eqnarray}
for any $\mathbf{w}\in V_{per}$, $\mathrm{P}\times \mathcal{G}, a.e. ~(\omega, y, \tau)$, $\mathcal{G}$ is the Lebesgue measure and the bilinear operator $\rho$ is defined by
$$\rho(\mathbf{u}, \mathbf{v})=\sum_{i,j=1}^N\int_{D_\tau}a_{i,j}\frac{\partial \mathbf{u}}{\partial y_i}\frac{\partial \mathbf{v}}{\partial y_j}dyd\tau.$$
\end{lemma}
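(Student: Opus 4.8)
The plan is to read off the microscopic (cell) equation for $\overline{\mathbf{u}}$ from the coupled variational identity \eqref{4.3}, and then to identify its solution explicitly by linearity together with the Lax--Milgram lemma applied to the bilinear form $\rho$.

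\textbf{Step 1: extracting the cell equation.} In \eqref{4.3} I would first restrict attention to test pairs of the form $(\varphi,\psi)=(0,\psi)$, with $\psi\in L^2(\Omega; L^2(D_t; L^2(\widetilde{T};V_{per})))$ arbitrary. Since every term of \eqref{4.3} other than the diffusion term carries the factor $\varphi$ (in particular the stochastic integral disappears, so no adaptedness issue arises) and $\frac{\partial\varphi}{\partial x_j}=0$, what remains is
\begin{align*}
\sum_{i,j=1}^N\int_{D_t}\int_{D_\tau}a_{i,j}\Big(\frac{\partial\mathbf{u}}{\partial x_i}+\frac{\partial\overline{\mathbf{u}}}{\partial y_i}\Big)\frac{\partial\psi}{\partial y_j}\,dx\,dy\,dt\,d\tau=0,\qquad \mathrm{P}\ \mathrm{a.s.}
\end{align*}
Choosing $\psi$ of tensor form $\psi=1_{\mathcal{A}}(\omega)\phi(x,t)\mathbf{w}(y,\tau)$ with $\mathcal{A}\in\mathcal{F}$, $\phi$ ranging over a countable dense subset of $L^2(D_t)$, and $\mathbf{w}$ over a countable dense subset of $L^2(\widetilde{T};V_{per})$, using the arbitrariness of $\mathcal{A}$ and a Fubini argument, one deduces that for $\mathrm{P}\times dx\,dt$-a.e. $(\omega,x,t)$ the slice $\overline{\mathbf{u}}(\omega,x,t,\cdot,\cdot)$ belongs to $L^2(\widetilde{T};V_{per})$ and solves
\begin{align*}
\rho\big(\overline{\mathbf{u}}(\omega,x,t,\cdot,\cdot),\mathbf{w}\big)=-\sum_{i=1}^N\frac{\partial\mathbf{u}}{\partial x_i}(\omega,x,t)\sum_{j=1}^N\int_{D_\tau}a_{i,j}\frac{\partial\mathbf{w}}{\partial y_j}\,dy\,d\tau,\qquad\forall\,\mathbf{w}\in L^2(\widetilde{T};V_{per}).
\end{align*}
(When $\mathbf{w}$ is allowed to depend on $\tau$ this identity is equivalent to the $\tau$-parametrized family of elliptic cell problems, consistent with the $\tau$-dependence of $\eta_{i,k}$.)

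\textbf{Step 2: solvability and uniqueness of the cell problem.} The form $\rho$ is bounded on $L^2(\widetilde{T};V_{per})$ by the $L^\infty$-bounds on the $a_{i,j}$, and coercive: by the uniform ellipticity \eqref{1.2} applied pointwise to $\xi=\nabla_y\mathbf{v}$,
\begin{align*}
\rho(\mathbf{v},\mathbf{v})\ge \kappa\int_{D_\tau}|\nabla_y\mathbf{v}|^2\,dy\,d\tau=\kappa\|\mathbf{v}\|^2_{L^2(\widetilde{T};V_{per})}.
\end{align*}
Hence the Lax--Milgram lemma gives, for every pair of indices $(i,k)$, a unique $\eta_{i,k}\in L^2(\widetilde{T};V_{per})$ solving \eqref{var1}, and, for the same reason, the solution of the variational problem displayed in Step 1 is unique.

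\textbf{Step 3: identification, and the main difficulty.} By linearity of $\rho$ and of the right-hand side of \eqref{var1}, the function $\mathbf{v}:=-\sum_{i,k=1}^N\frac{\partial\mathbf{u}}{\partial x_i}(\omega,x,t)\,\eta_{i,k}(y,\tau)$ satisfies, for all $\mathbf{w}\in L^2(\widetilde{T};V_{per})$,
\begin{align*}
\rho(\mathbf{v},\mathbf{w})=-\sum_{i,k=1}^N\frac{\partial\mathbf{u}}{\partial x_i}\,\rho(\eta_{i,k},\mathbf{w})=-\sum_{i=1}^N\frac{\partial\mathbf{u}}{\partial x_i}\sum_{j=1}^N\int_{D_\tau}a_{i,j}\frac{\partial\mathbf{w}}{\partial y_j}\,dy\,d\tau,
\end{align*}
where the sum over the component index $k$ reassembles $\frac{\partial\mathbf{w}}{\partial y_j}$ from its components. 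This is precisely the equation satisfied by $\overline{\mathbf{u}}(\omega,x,t,\cdot,\cdot)$ in Step 1, so the uniqueness from Step 2 forces $\overline{\mathbf{u}}=-\sum_{i,k}\frac{\partial\mathbf{u}}{\partial x_i}\eta_{i,k}$, $\mathrm{P}$-a.s. and a.e. The elliptic part (Steps 2--3) is routine; the delicate point is the localization in Step 1 — turning the $\mathrm{P}$-a.s. identity valid for each fixed test function into a pointwise-in-$(\omega,x,t,\tau)$ statement — which requires intersecting countably many full-measure events together with a careful Fubini argument, and relies on $\overline{\mathbf{u}}$ (provided by Lemma \ref{lem4.2}) having the $V_{per}$-regularity in the fast variables needed both for the cell problem to make sense and for the coercivity estimate to be applicable.
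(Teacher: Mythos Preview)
Your proposal is correct and follows essentially the same route as the paper: set $\varphi=0$ in \eqref{4.3} with tensor test functions $\psi=\phi\,\mathbf{w}$ to isolate the cell problem, invoke well-posedness of the periodic variational problem, and conclude by uniqueness that $\overline{\mathbf{u}}$ coincides with the explicit linear combination of the correctors $\eta_{i,k}$. Your write-up is in fact more careful than the paper's, which simply asserts the localization step and the well-posedness of \eqref{var1}; your explicit mention of the Lax--Milgram argument (via \eqref{1.2}) and of the countable-density/Fubini localization fills in precisely the points the paper leaves implicit.
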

\begin{proof}  Choosing $\varphi=0$ in equations \eqref{4.3}, and $\psi=\phi \mathbf{w}$ for $\phi\in C^\infty(D_t\times \widetilde{T})$ and $ \mathbf{w}\in V_{per}$, we have
\begin{align}\label{v3}
\rho(\overline{\mathbf{u}}, \mathbf{w})=\sum_{i, j, k=1}^N \int_{D_t} \frac{\partial\mathbf{u}}{\partial x_i}\int_{D_\tau}a_{i,j}\frac{\partial\mathbf{w}^k}{\partial y_j}dy d\tau dxdt, ~ \mathrm{P} ~a.s.
\end{align}
According to the well-posedness of the variational problem \eqref{var1}, the process $\overline{\mathbf{u}}$ is the unique solution to \eqref{var1}. Compared with \eqref{var1}, we find that
$$\mathbf{f}=-\sum_{i,k=1}^N \frac{\partial \mathbf{u}}{\partial x_i}(x,t)\eta_{i,k}(y, \tau),  ~ \mathrm{P} ~a.s.$$
is also the solution of \eqref{v3}. The uniqueness implies that $\overline{\mathbf{u}}=\mathbf{f}$.
\end{proof}

Denote the function
$$\mathbf{a}_{i,j,k,l}=\int_{D_\tau}a_{i,j}dy d\tau+\int_{D_{\tau}}a_{i,j}\frac{\partial\eta^l_{i,k}}{\partial y_j}dy d\tau,$$
for $1\leq i,j,k,l \leq N$, corresponding to the function, we denote by $\widetilde{A}=(\widetilde{A}_{kl})_{k,l=1,\cdots, N}$ the differential homogenized operator
$$\widetilde{A}_{kl}=-\sum_{i,j=1}^N\mathbf{a}_{i,j,k,l}\frac{\partial^2}{\partial x_i \partial y_j},~~ k,l=1,2,\cdots,N.$$

Observe that the homogenized operator $\widetilde{A}$ is the uniformly elliptic see \cite{ben}, thus, there exists a constant $\kappa$ such that
$$\sum_{i,j,k,l=1}^N\mathbf{a}_{i,j,k,l}\xi_{i,k}\xi_{j,l}\geq \kappa \sum_{k,l=1}^N|\xi_{k,l}|^2.$$
Using the uniformly elliptic condition, assumptions ({\bf F.1})-({\bf F.3}) and ({\bf G.1})-({\bf G.2}), we could infer that by a same argument as previously, the equations
\begin{eqnarray*}
\partial_t \mathbf{u}+\widetilde{A }\mathbf{u}+B(\mathbf{u}, \mathbf{u})=F(\mathbf{u}, \mathfrak{L}_{\mathbf{u}(t)})+G(t, \mathbf{u})\frac{dW}{dt}
\end{eqnarray*}
have the solution $\mathbf{u}\in L^p(\Omega; L^\infty(0,T;H)\cap L^2(0,T;V)\cap L^4(D\times (0,T)))$ for any $p\geq 2$. When $N=2$, the solution is pathwise unique. Particularly, the pathwise uniqueness of solution to the Allen-Cahn equations holds for both $N=2,3$.

\section{A corrector result}
In this section, we establish a corrector result which improves the convergence of $\nabla \mathbf{u}^\varepsilon$ in $L^2(\Omega\times D_t),~\Sigma\!-\!{\rm weak}$ to the $L^2(\Omega\times D_t),~\Sigma\!-\!{\rm strong}$.
\begin{proposition} Under the same assumptions as those of in Proposition \ref{pro2.1}, we have
$$\frac{\partial\mathbf{u}^\varepsilon}{\partial x_i}\rightarrow \frac{\partial\mathbf{u}}{\partial x_i}+\frac{\partial\overline{\mathbf{u}}}{\partial y_i}, ~{\rm in }~ L^2(\Omega\times D_t),~\Sigma\!-\!{\rm strong}, ~1\leq i\leq N$$
as $\varepsilon\rightarrow 0$.
\end{proposition}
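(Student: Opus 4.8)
The plan is to run the Tartar--Murat energy method adapted to the two-scale/stochastic framework: one identifies the limit of the full Dirichlet energy $Q^\varepsilon:=\mathrm{E}\int_0^T(A^\varepsilon\mathbf{u}^\varepsilon,\mathbf{u}^\varepsilon)_{V'\times V}\,dt$, shows it equals the homogenized energy $Q:=\mathrm{E}\int_{D_t}\int_{D_\tau}\sum_{i,j}a_{i,j}(\partial_{x_i}\mathbf{u}+\partial_{y_i}\overline{\mathbf{u}})(\partial_{x_j}\mathbf{u}+\partial_{y_j}\overline{\mathbf{u}})\,dx\,dy\,dt\,d\tau$, and then exploits the ellipticity \eqref{1.2} together with Lemma \ref{lem4} to upgrade the $\Sigma$-weak convergence \eqref{4.4} to $\Sigma$-strong. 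Since \eqref{4.4} is already available, by Lemma \ref{lem4} it suffices to prove, for each $i$, the convergence of norms $\mathrm{E}\int_0^T\|\partial_{x_i}\mathbf{u}^\varepsilon\|_H^2\,dt\to\mathrm{E}\int_{D_t}\int_{D_\tau}|\partial_{x_i}\mathbf{u}+\partial_{y_i}\overline{\mathbf{u}}|^2\,dx\,dy\,dt\,d\tau$. The starting point is two energy identities. Applying the It\^{o} formula to $\|\mathbf{u}^\varepsilon\|_H^2$, using $(B(\mathbf{u}^\varepsilon,\mathbf{u}^\varepsilon),\mathbf{u}^\varepsilon)=0$ from \eqref{2.1*} and taking expectations (which annihilates the stochastic integral), gives
$$Q^\varepsilon=\frac12\|\mathbf{u}_0\|_H^2-\frac12\mathrm{E}\|\mathbf{u}^\varepsilon(T)\|_H^2+\mathrm{E}\int_0^T(F(\mathbf{u}^\varepsilon,\mathfrak{L}_{\mathbf{u}^\varepsilon(t)}),\mathbf{u}^\varepsilon)\,dt+\frac12\mathrm{E}\int_0^T\|G(t,\mathbf{u}^\varepsilon)\|_{L_2(H,H)}^2\,dt.$$
Testing the pointwise-in-time form of \eqref{4.3} against $(\varphi,\psi)=(\mathbf{u},\overline{\mathbf{u}})$ and again applying the It\^{o} formula to $\|\mathbf{u}\|_H^2$ (using $(B(\mathbf{u},\mathbf{u}),\mathbf{u})=0$) yields the analogous identity
$$Q=\frac12\|\mathbf{u}_0\|_H^2-\frac12\mathrm{E}\|\mathbf{u}(T)\|_H^2+\mathrm{E}\int_0^T(F(\mathbf{u},\mathfrak{L}_{\mathbf{u}(t)}),\mathbf{u})\,dt+\frac12\mathrm{E}\int_0^T\|G(t,\mathbf{u})\|_{L_2(H,H)}^2\,dt.$$

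Next I would pass to the limit on the right-hand side of the first identity. The strong convergence $\mathbf{u}^\varepsilon\to\mathbf{u}$ in $L^2(\Omega;L^2(D_t))$ and $(\mathbf{G.1})$ give $\mathrm{E}\int_0^T\|G(t,\mathbf{u}^\varepsilon)\|_{L_2(H,H)}^2\,dt\to\mathrm{E}\int_0^T\|G(t,\mathbf{u})\|_{L_2(H,H)}^2\,dt$; the convergence $\mathbf{u}^\varepsilon\to\mathbf{u}$ in $C([0,T];V')$ together with the uniform $L^\infty(0,T;H)$-bound from \eqref{1.23} gives $\mathbf{u}^\varepsilon(T)\to\mathbf{u}(T)$ weakly in $H$, $\mathrm{P}$ a.s., whence $\liminf_\varepsilon\mathrm{E}\|\mathbf{u}^\varepsilon(T)\|_H^2\ge\mathrm{E}\|\mathbf{u}(T)\|_H^2$ by weak lower semicontinuity and Fatou; and, using the local monotonicity $(\mathbf{F.3})$ to compare $\mathbf{u}^\varepsilon$ with $\mathbf{u}$, the convergence $(F(\mathbf{u}^\varepsilon,\mathfrak{L}_{\mathbf{u}^\varepsilon(t)}),\mathbf{v})\to(F(\mathbf{u},\mathfrak{L}_{\mathbf{u}(t)}),\mathbf{v})$ from $(\mathbf{F.2})$, and the $\mathscr{P}_2(V')$-convergence $\mathfrak{L}_{\mathbf{u}^\varepsilon(t)}\to\mathfrak{L}_{\mathbf{u}(t)}$, one controls $\limsup_\varepsilon\mathrm{E}\int_0^T(F(\mathbf{u}^\varepsilon,\mathfrak{L}_{\mathbf{u}^\varepsilon(t)}),\mathbf{u}^\varepsilon)\,dt$ by $\mathrm{E}\int_0^T(F(\mathbf{u},\mathfrak{L}_{\mathbf{u}(t)}),\mathbf{u})\,dt$; here the only non-Lipschitz contribution is the dissipative $-\|\mathbf{u}^\varepsilon\|_{L^4}^4$-type term, which enters with the favorable sign so that weak lower semicontinuity of the $L^4$-norm acts in the correct direction, and in all three limits the uniform $L^p$-bounds \eqref{1.23} with $p>2$ supply the uniform integrability in $\omega$. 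Combining these with the two identities gives $\limsup_\varepsilon Q^\varepsilon\le Q$.

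The reverse inequality is the $\Sigma$-lower semicontinuity of the symmetric elliptic quadratic form. For any oscillating test field $\Phi^\varepsilon=(\Phi^\varepsilon_i)_{i=1}^N$ with $\Phi^\varepsilon_i(x,t)=\phi_i(x,t,\tfrac{x}{\varepsilon},\tfrac{t}{\varepsilon})$, $\phi_i$ smooth and $D_\tau$-periodic in $(y,\tau)$, one has $\mathrm{E}\int_0^T\sum_{i,j}a^\varepsilon_{i,j}(\partial_{x_j}\mathbf{u}^\varepsilon-\Phi^\varepsilon_j)(\partial_{x_i}\mathbf{u}^\varepsilon-\Phi^\varepsilon_i)\,dt\ge0$; expanding the square and letting $\varepsilon\to0$ --- the cross term handled by \eqref{4.4}, using that $a^\varepsilon_{i,j}\Phi^\varepsilon_i$ equals $(a_{i,j}\phi_i)(x,t,\tfrac{x}{\varepsilon},\tfrac{t}{\varepsilon})$ with profile $a_{i,j}\phi_i\in L^2(\Omega\times D_t;L^2_{per}(D_{\tau}))$ (since $a_{i,j}\in L^\infty_{per}$) and hence an admissible two-scale test function, and the purely oscillating term $a^\varepsilon_{i,j}\Phi^\varepsilon_i\Phi^\varepsilon_j$ handled by the mean-value property of periodic functions --- yields
$$\liminf_\varepsilon Q^\varepsilon\ge 2\,\mathrm{E}\int_{D_t}\int_{D_\tau}\sum_{i,j}a_{i,j}(\partial_{x_j}\mathbf{u}+\partial_{y_j}\overline{\mathbf{u}})\phi_i\,dx\,dy\,dt\,d\tau-\mathrm{E}\int_{D_t}\int_{D_\tau}\sum_{i,j}a_{i,j}\phi_i\phi_j\,dx\,dy\,dt\,d\tau.$$
Taking $\phi_i\to\partial_{x_i}\mathbf{u}+\partial_{y_i}\overline{\mathbf{u}}$ in $L^2(\Omega\times D_t;L^2_{per}(D_{\tau}))$ (density of smooth functions), the right-hand side tends to $2Q-Q=Q$, so $\liminf_\varepsilon Q^\varepsilon\ge Q$, and therefore $Q^\varepsilon\to Q$.

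Finally, to convert this into $\Sigma$-strong convergence of $\nabla\mathbf{u}^\varepsilon$ I would keep $\varepsilon$ fixed and invoke ellipticity \eqref{1.2} in the form $\kappa\,\mathrm{E}\int_0^T\|\nabla\mathbf{u}^\varepsilon-\Phi^\varepsilon\|_H^2\,dt\le Q^\varepsilon-2\,\mathrm{E}\int_0^T\sum_{i,j}(a^\varepsilon_{i,j}\partial_{x_j}\mathbf{u}^\varepsilon,\Phi^\varepsilon_i)\,dt+\mathrm{E}\int_0^T\sum_{i,j}(a^\varepsilon_{i,j}\Phi^\varepsilon_j,\Phi^\varepsilon_i)\,dt$; taking $\limsup_\varepsilon$ and then $\phi\to\nabla_x\mathbf{u}+\nabla_y\overline{\mathbf{u}}$ as above, the right-hand side tends to $Q-2Q+Q=0$, so the correctors $\Phi^\varepsilon$ satisfy $\limsup_\varepsilon\mathrm{E}\int_0^T\|\nabla\mathbf{u}^\varepsilon-\Phi^\varepsilon\|_H^2\,dt$ can be made arbitrarily small along the approximation. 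Expanding $\|\nabla\mathbf{u}^\varepsilon\|_H^2=\|\nabla\mathbf{u}^\varepsilon-\Phi^\varepsilon\|_H^2+2(\nabla\mathbf{u}^\varepsilon-\Phi^\varepsilon,\Phi^\varepsilon)_H+\|\Phi^\varepsilon\|_H^2$, passing to the limit (the cross term by \eqref{4.4}, the last by periodic averaging), and then sending $\phi\to\nabla_x\mathbf{u}+\nabla_y\overline{\mathbf{u}}$, one obtains $\limsup_\varepsilon\mathrm{E}\int_0^T\|\nabla\mathbf{u}^\varepsilon\|_H^2\,dt\le\mathrm{E}\int_{D_t}\int_{D_\tau}|\nabla_x\mathbf{u}+\nabla_y\overline{\mathbf{u}}|^2\,dx\,dy\,dt\,d\tau$; the matching lower bound, for each component, is the two-scale lower semicontinuity of the $L^2$-norm (immediate from \eqref{4.4} by testing against smooth oscillating functions). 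Summing over components and comparing forces componentwise convergence of the norms, and Lemma \ref{lem4} then delivers the claimed $\Sigma$-strong convergence. I expect the main obstacle to be the limit passage on the right-hand side of the energy identity for $\mathbf{u}^\varepsilon$: precisely, controlling the distribution-dependent, non-Lipschitz term $F$ --- where one must carefully exploit the favorable sign of the $L^4$-dissipation, the $\mathscr{P}_2(V')$-convergence of the laws, $(\mathbf{F.2})$ for the off-diagonal pairings and $(\mathbf{F.3})$ for the diagonal difference (absorbing the residual against the coercivity margin) --- and securing the uniform integrability in $\omega$ of $\|\mathbf{u}^\varepsilon(T)\|_H^2$ via the $p>2$ bounds.
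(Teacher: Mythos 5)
Your overall strategy is the one the paper uses: the It\^{o} energy identity for $\|\mathbf{u}^\varepsilon\|_H^2$, identification of the limit of the Dirichlet energy with the homogenized quadratic form, ellipticity \eqref{1.2} to control the defect $\mathbf{u}^\varepsilon-\Psi^\varepsilon$ against an oscillating corrector, and Lemma \ref{lem4} (weak two-scale convergence plus convergence of norms) to conclude. Your final step --- expanding $\|\nabla\mathbf{u}^\varepsilon\|_H^2$ around the corrector and sending the smooth profile to $\nabla_x\mathbf{u}+\nabla_y\overline{\mathbf{u}}$ by density --- is exactly the paper's triangle-inequality argument in \eqref{5.9}--\eqref{5.11}. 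Where you diverge is in how the energy convergence is established. The paper does not split into $\limsup\le$ and $\liminf\ge$: it computes the limit of the full defect energy $\mathrm{E}\int_0^T(A^\varepsilon(\mathbf{u}^\varepsilon-\Psi^\varepsilon),\mathbf{u}^\varepsilon-\Psi^\varepsilon)_{V'\times V}\,dt$ in \eqref{6.3}--\eqref{5.7}, asserting \emph{two-sided} convergence of every term on the right-hand side --- in particular $\mathrm{E}\|\mathbf{u}^\varepsilon(T)\|_H^2\to\mathrm{E}\|\mathbf{u}(T)\|_H^2$ (via the a.s.\ convergence in \eqref{2.1}, the uniform bounds \eqref{1.23} and dominated convergence, not merely weak lower semicontinuity) and $\mathrm{E}\int_0^T(F(\mathbf{u}^\varepsilon,\mathfrak{L}_{\mathbf{u}^\varepsilon(t)}),\mathbf{u}^\varepsilon)\,dt\to\mathrm{E}\int_0^T(F(\mathbf{u},\mathfrak{L}_{\mathbf{u}(t)}),\mathbf{u})\,dt$ in \eqref{5.4} --- and then uses Proposition \ref{pro2.1} to rewrite the resulting limit as the homogenized form evaluated at $\widetilde{\mathbf{u}}-\Psi$. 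Your Tartar-style splitting buys robustness at the terminal-time term, at the price of an extra lower-semicontinuity argument for the quadratic form; the paper's route is shorter but leans harder on the convergence of the $F$-pairing.

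The concrete weak point in your version is the $\limsup$ of the $F$-term. The comparison via $(\mathbf{F.3})$ produces the residual $\eta\,\mathrm{E}\int_0^T\|\mathbf{u}^\varepsilon-\mathbf{u}\|_{V}^2\,dt$, and this quantity does \emph{not} vanish: by \eqref{4.4} and lower semicontinuity its $\liminf$ is at least $\mathrm{E}\int_{D_t}\int_{D_\tau}|\nabla_y\overline{\mathbf{u}}|^2$, which is positive whenever the corrector is nontrivial. ``Absorbing it into the coercivity margin'' therefore only yields $\limsup_\varepsilon\bigl(Q^\varepsilon-\eta\,\mathrm{E}\int_0^T\|\nabla(\mathbf{u}^\varepsilon-\mathbf{u})\|_H^2\,dt\bigr)\le Q$, which combined with $\liminf_\varepsilon Q^\varepsilon\ge Q$ gives only an inequality, not the exact convergence $Q^\varepsilon\to Q$ that Lemma \ref{lem4} requires (that lemma needs convergence of norms, not a one-sided bound). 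For the concrete example $F=F_1+F_2$ with $F_2=-c\mathbf{u}^3+\xi\mathbf{u}$ your sign argument does close the gap without invoking $(\mathbf{F.3})$, since $-c\|\mathbf{u}^\varepsilon\|_{L^4}^4$ is handled by weak lower semicontinuity of the $L^4$-norm and the remaining pieces are Lipschitz in $H$; but for the abstract $F$ satisfying only $(\mathbf{F.1})$--$(\mathbf{F.3})$ you need the genuine two-sided convergence of $\mathrm{E}\int_0^T(F(\mathbf{u}^\varepsilon,\mathfrak{L}_{\mathbf{u}^\varepsilon(t)}),\mathbf{u}^\varepsilon)\,dt$ that the paper invokes in \eqref{5.4}, and your proposal does not supply a substitute for it.
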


\begin{proof}
From \eqref{a2.14}, we have
\begin{align*}
\int_0^T(A^\varepsilon \mathbf{u}^\varepsilon, \mathbf{u}^\varepsilon)_{V'\times V}dt=&-\frac{1}{2}\|\mathbf{u}^\varepsilon\|_H^2+\frac{1}{2}\|\mathbf{u}_0\|_H^2+\int_{0}^{T}(F(\mathbf{u}^\varepsilon, \mathfrak{L}_{\mathbf{u}^\varepsilon(t)}), \mathbf{u}^\varepsilon)dt\nonumber\\&+ \int_{0}^{T}(G(t, \mathbf{u}^\varepsilon)dW, \mathbf{u}^\varepsilon)+\frac{1}{2}\int_{0}^{T}\|G(t, \mathbf{u}^\varepsilon)\|_{L_2(H,H)}^2 dt.
\end{align*}
Taking expectation on both sides, we see
\begin{align}\label{2.14*}
\mathrm{E}\int_0^T(A^\varepsilon \mathbf{u}^\varepsilon, \mathbf{u}^\varepsilon)_{V'\times V}dt=&-\frac{1}{2}\mathrm{E}\|\mathbf{u}^\varepsilon\|_H^2+\frac{1}{2}\|\mathbf{u}_0\|_H^2+\mathrm{E}\int_{0}^{T}(F(\mathbf{u}^\varepsilon, \mathfrak{L}_{\mathbf{u}^\varepsilon(t)}), \mathbf{u}^\varepsilon)dt\nonumber\\&+\frac{1}{2}\mathrm{E}\int_{0}^{T}\|G(t, \mathbf{u}^\varepsilon)\|_{L_2(H,H)}^2 dt,
\end{align}
where we used the fact that the stochastic integral term is a square integrable martingale which the mean value is zero. By \eqref{2.14*}, we further have
\begin{align}\label{6.3}
\mathrm{E}\int_0^T(A^\varepsilon (\mathbf{u}^\varepsilon-\Psi^\varepsilon), \mathbf{u}^\varepsilon-\Psi^\varepsilon)_{V'\times V}dt=&-\frac{1}{2}\mathrm{E}\|\mathbf{u}^\varepsilon\|_H^2+\frac{1}{2}\|\mathbf{u}_0\|_H^2
+\mathrm{E}\int_{0}^{T}(F(\mathbf{u}^\varepsilon, \mathfrak{L}_{\mathbf{u}^\varepsilon(t)}), \mathbf{u}^\varepsilon)dt\nonumber\\&+\frac{1}{2}\mathrm{E}\int_{0}^{T}\|G(t, \mathbf{u}^\varepsilon)\|_{L_2(H,H)}^2 dt-2\mathrm{E}\int_0^T(A^\varepsilon \mathbf{u}^\varepsilon, \Psi^\varepsilon)_{V'\times V}dt\nonumber\\
&+\mathrm{E}\int_0^T(A^\varepsilon \Psi^\varepsilon, \Psi^\varepsilon)_{V'\times V}dt,
\end{align}
where $\Psi^\varepsilon$ is defined as that of in Proposition \ref{pro2.1}.

As the proof of Proposition \ref{pro2.1}, we know
\begin{align}\label{5.4}
&\mathrm{E}\int_{0}^{T}(F(\mathbf{u}^\varepsilon, \mathfrak{L}_{\mathbf{u}^\varepsilon(t)}), \mathbf{u}^\varepsilon)dt\rightarrow\mathrm{E}\int_{0}^{T}(F(\mathbf{u}, \mathfrak{L}_{\mathbf{u}(t)}), \mathbf{u})dt,\nonumber\\
&\mathrm{E}\int_0^T(A^\varepsilon \mathbf{u}^\varepsilon, \Psi^\varepsilon)_{V'\times V}dt\rightarrow \sum_{i,j=1}^N \mathrm{E}\int_{D_t}\int_{D_\tau}a_{i,j}\left(\frac{\partial \mathbf{u}}{\partial x_i}+\frac{\partial \overline{\mathbf{u}}}{\partial y_i}\right)\left(\frac{\partial \varphi}{\partial x_j}+\frac{\partial \psi}{\partial y_j}
\right)1_{\mathcal{A}}(\omega)dxdydtd\tau,\\
&\mathrm{E}\int_0^T(A^\varepsilon \Psi^\varepsilon, \Psi^\varepsilon)_{V'\times V}dt\rightarrow \sum_{i,j=1}^N \mathrm{E}\int_{D_t}\int_{D_\tau}a_{i,j}\left(\frac{\partial \varphi}{\partial x_i}+\frac{\partial \psi}{\partial y_i}\right)\left(\frac{\partial \varphi}{\partial x_j}+\frac{\partial \psi}{\partial y_j}
\right)1_{\mathcal{A}}(\omega)dxdydtd\tau.\nonumber
\end{align}
From \eqref{2.1}, for a.e. $t$, we obtain $\mathbf{u}^\varepsilon\rightarrow \mathbf{u}$ in $H$,  the dominated convergence theorem leads to
\begin{align}\label{5.5}
\mathrm{E}\|\mathbf{u}^\varepsilon\|_H^2\rightarrow\mathrm{E}\|\mathbf{u}\|_H^2.
\end{align}
By assumption $({\bf G.1})$ and \eqref{2.1}, we get $\mathrm{P}$ a.s.
\begin{align*}
\int_{0}^{T}\|G(t, \mathbf{u}^\varepsilon)\|_{L_2(H,H)}^2 dt\rightarrow \int_{0}^{T}\|G(t, \mathbf{u})\|_{L_2(H,H)}^2 dt,
\end{align*}
then, we infer from \eqref{1.23} and the dominated convergence theorem
\begin{align}\label{5.6}
\mathrm{E}\int_{0}^{T}\|G(t, \mathbf{u}^\varepsilon)\|_{L_2(H,H)}^2 dt\rightarrow \mathrm{E}\int_{0}^{T}\|G(t, \mathbf{u})\|_{L_2(H,H)}^2 dt.
\end{align}
Combining \eqref{6.3}-\eqref{5.6} and Proposition \ref{pro2.1}, we find
\begin{align}\label{5.7}
&\lim_{\varepsilon\rightarrow 0}\mathrm{E}\int_0^T(A^\varepsilon (\mathbf{u}^\varepsilon-\Psi^\varepsilon), \mathbf{u}^\varepsilon-\Psi^\varepsilon)_{V'\times V}dt\nonumber\\
&=-\frac{1}{2}\mathrm{E}\|\mathbf{u}\|_H^2+\frac{1}{2}\|\mathbf{u}_0\|_H^2+\mathrm{E}\int_{0}^{T}(F(\mathbf{u}, \mathfrak{L}_{\mathbf{u}(t)}), \mathbf{u})dt+\frac{1}{2}\mathrm{E}\int_{0}^{T}\|G(t, \mathbf{u})\|_{L_2(H,H)}^2 dt\nonumber\\
&\quad-2\sum_{i,j=1}^N \mathrm{E}\int_{D_t}\int_{D_\tau}a_{i,j}\left(\frac{\partial \mathbf{u}}{\partial x_i}+\frac{\partial \overline{\mathbf{u}}}{\partial y_i}\right)\left(\frac{\partial \varphi}{\partial x_j}+\frac{\partial \psi}{\partial y_j}
\right)1_{\mathcal{A}}(\omega)dxdydtd\tau\nonumber\\
&\quad+\sum_{i,j=1}^N \mathrm{E}\int_{D_t}\int_{D_\tau}a_{i,j}\left(\frac{\partial \varphi}{\partial x_i}+\frac{\partial \psi}{\partial y_i}\right)\left(\frac{\partial \varphi}{\partial x_j}+\frac{\partial \psi}{\partial y_j}
\right)1_{\mathcal{A}}(\omega)dxdydtd\tau\nonumber\\
&=\sum_{i,j=1}^N \mathrm{E}\int_{D_t}\int_{D_\tau}a_{i,j}\left(\frac{\partial \mathbf{u}}{\partial x_i}+\frac{\partial \overline{\mathbf{u}}}{\partial y_i}\right)\left(\frac{\partial \mathbf{u}}{\partial x_j}+\frac{\partial \overline{\mathbf{u}}}{\partial y_j}
\right)dxdydtd\tau\nonumber\\
&\quad-2\sum_{i,j=1}^N \mathrm{E}\int_{D_t}\int_{D_\tau}a_{i,j}\left(\frac{\partial \mathbf{u}}{\partial x_i}+\frac{\partial \overline{\mathbf{u}}}{\partial y_i}\right)\left(\frac{\partial \varphi}{\partial x_j}+\frac{\partial \psi}{\partial y_j}
\right)1_{\mathcal{A}}(\omega)dxdydtd\tau\nonumber\\
&\quad+\sum_{i,j=1}^N \mathrm{E}\int_{D_t}\int_{D_\tau}a_{i,j}\left(\frac{\partial \varphi}{\partial x_i}+\frac{\partial \psi}{\partial y_i}\right)\left(\frac{\partial \varphi}{\partial x_j}+\frac{\partial \psi}{\partial y_j}
\right)1_{\mathcal{A}}(\omega)dxdydtd\tau\nonumber\\
&=:\mathrm{E}\int_0^T(\widetilde{a} (\widetilde{\mathbf{u}}-\Psi), \widetilde{\mathbf{u}}-\Psi)dt,
\end{align}
where $\widetilde{\mathbf{u}}=(\mathbf{u}, \overline{\mathbf{u}})$, $\Psi=(\varphi1_{\mathcal{A}}(\omega), \psi1_{\mathcal{A}}(\omega))$, $\varphi\in C^\infty(D_t), \psi\in C^\infty(D_t)\times C^\infty_{per}(D_\tau)$ are chosen to satisfy that
\begin{align}\label{5.8}
\mathrm{E}\int_0^T(\widetilde{a} (\widetilde{\mathbf{u}}-\Psi), \widetilde{\mathbf{u}}-\Psi)dt\leq \varepsilon_1,
\end{align}
 $\varepsilon_1$ is a constant that could be small arbitrarily. This could be achieved due to the fact that $C^\infty(D_t)\times 1_{\cdot}(\omega)$ and $C^\infty(D_t)\times C^\infty_{per}(D_\tau)\times 1_{\cdot}(\omega)$ are dense in $L^2(\Omega\times D_t), L^2(\Omega \times D_t; L^2_{per}(D_\tau))$ respectively. By \eqref{5.7} and \eqref{5.8}, we infer that there exists  $\eta>0$ such that for all $\varepsilon<\eta$
\begin{align*}
\mathrm{E}\int_0^T(A^\varepsilon (\mathbf{u}^\varepsilon-\Psi^\varepsilon), \mathbf{u}^\varepsilon-\Psi^\varepsilon)_{V'\times V}dt\leq 2\varepsilon_1.
\end{align*}
By condition \eqref{1.2}, we further obtain
\begin{align}\label{5.9}
\mathrm{E}\int_0^T(\mathbf{u}^\varepsilon-\Psi^\varepsilon, \mathbf{u}^\varepsilon-\Psi^\varepsilon)_{V}dt\leq \frac{2}{\kappa}\varepsilon_1,
\end{align}
also,
\begin{align}\label{5.88}
\mathrm{E}\int_0^T(\widetilde{\mathbf{u}}-\Psi, \widetilde{\mathbf{u}}-\Psi)dt\leq \frac{\varepsilon_1}{\kappa}.
\end{align}

With the preliminary inequalities in hands,  we next prove the result using Lemma \ref{lem4}, it remains to show that
$$\left\|\frac{ \partial\mathbf{u}^\varepsilon}{\partial x_i}\right\|_{L^2(\Omega\times D_t)}\rightarrow \left\|\frac{\partial\mathbf{u}}{\partial x_i}+\frac{\partial\overline{\mathbf{u}}}{\partial y_i}\right\|_{L^2(\Omega\times D_t, L^2_{per}(D_{\tau}))}.$$

First, note that
\begin{align*}
\frac{ \partial\Psi^\varepsilon}{\partial x_i}\rightarrow \left(\frac{\partial\varphi}{\partial x_i}+\frac{\partial\psi}{\partial y_i}\right)1_{\mathcal{A}}(\omega), ~{\rm in }~ L^2(\Omega\times D_t),~\Sigma\!-\!{\rm strong},
\end{align*}
then
\begin{align*}
\left\|\frac{ \partial\Psi^\varepsilon}{\partial x_i}\right\|_{L^2(\Omega\times D_t)}\rightarrow\left\|\left(\frac{\partial\varphi}{\partial x_i}+\frac{\partial\psi}{\partial y_i}\right)1_{\mathcal{A}}(\omega)\right\|_{L^2(\Omega\times D_t, L^2_{per}(D_{\tau}))}.
\end{align*}
Thus, for any $\varepsilon_2>0$, there exists  $\delta>0$ such that $\varepsilon<\delta$, we have
\begin{align}\label{5.11}
\left|\left\|\frac{ \partial\Psi^\varepsilon}{\partial x_i}\right\|_{L^2(\Omega\times D_t)}-\left\|\left(\frac{\partial\varphi}{\partial x_i}+\frac{\partial\psi}{\partial y_i}\right)1_{\mathcal{A}}(\omega)\right\|_{L^2(\Omega\times D_t, L^2_{per}(D_{\tau}))}\right|\leq \varepsilon_2.
\end{align}

Using the triangle inequality and \eqref{5.9}-\eqref{5.11}, we conclude
\begin{align*}
&\left|\left\|\frac{ \partial\mathbf{u}^\varepsilon}{\partial x_i}\right\|_{L^2(\Omega\times D_t)}-\left\|\frac{\partial\mathbf{u}}{\partial x_i}+\frac{\partial\overline{\mathbf{u}}}{\partial y_i}\right\|_{L^2(\Omega\times D_t, L^2_{per}(D_{\tau}))}\right|\nonumber\\
&\leq \left|\left\|\frac{ \partial\mathbf{u}^\varepsilon}{\partial x_i}\right\|_{L^2(\Omega\times D_t)}-\left\|\frac{ \partial\Psi^\varepsilon}{\partial x_i}\right\|_{L^2(\Omega\times D_t)}\right|\nonumber\\
&\quad+\left|\left\|\frac{ \partial\Psi^\varepsilon}{\partial x_i}\right\|_{L^2(\Omega\times D_t)}-\left\|\left(\frac{\partial\varphi}{\partial x_i}+\frac{\partial\psi}{\partial y_i}\right)1_{\mathcal{A}}(\omega)\right\|_{L^2(\Omega\times D_t, L^2_{per}(D_{\tau}))}\right|\nonumber\\
&\quad+\left|\left\|\frac{\partial\mathbf{u}}{\partial x_i}+\frac{\partial\overline{\mathbf{u}}}{\partial y_i}\right\|_{L^2(\Omega\times D_t, L^2_{per}(D_{\tau}))}-\left\|\left(\frac{\partial\varphi}{\partial x_i}+\frac{\partial\psi}{\partial y_i}\right)1_{\mathcal{A}}(\omega)\right\|_{L^2(\Omega\times D_t, L^2_{per}(D_{\tau}))}\right|\nonumber\\
&\leq \frac{3}{\kappa}\varepsilon_1+\varepsilon_2.
\end{align*}
We complete the proof following the arbitrariness of $\varepsilon_1, \varepsilon_2$.
\end{proof}

\section{Appendix}
In the appendix, we introduce two lemmas used frequently in this paper. In order to establish the tightness of a family of probability measures, we first give the Aubin-Lions lemma.
For any $p\geq 1$, denote by
 \begin{equation*}
W^{1,p}(0,T;X):=\left\{\mathbf{u}\in L^{p}(0,T;X):\frac{d\mathbf{u}}{dt}\in L^{p}(0,T;X)\right\},
\end{equation*}
which is the classical Sobolev space with its usual norm,
\begin{equation*}
\|\mathbf{u}\|_{W^{1,p}(0,T;X)}^{p}=\int_{0}^{T}\|\mathbf{u}(t)\|_{X}^{p}+\left\|\frac{d\mathbf{u}}{dt}(t)\right\|_{X}^{p}dt.
\end{equation*}
\begin{lemma}\cite[Corollary 5]{Simon}\label{Simon1} Suppose that $X_{1}\subset X_{0}\subset X_{2}$ are Banach spaces and $X_{1}$ and $X_{2}$ are reflexive and the embedding of $X_{1}$ into $X_{0}$ is compact.
Then for any $1\leq p<\infty$, the embedding
\begin{equation*}
L^{p}(0,T; X_{1})\cap W^{1,2}(0,T; X_{2})\hookrightarrow L^{p}(0,T; X_{0}),
\end{equation*}
is compact.
\end{lemma}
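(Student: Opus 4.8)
The plan is to reduce the statement to Simon's characterization of relatively compact subsets of $L^p(0,T;X_0)$ and to furnish the two inputs of that criterion (equicontinuity of time translations and compactness of time averages) by using the two hypotheses of the lemma separately. Concretely, I would take an arbitrary bounded sequence $\{\mathbf{u}_n\}$ in $L^p(0,T;X_1)\cap W^{1,2}(0,T;X_2)$, say $\|\mathbf{u}_n\|_{L^p(0,T;X_1)}\le M_1$ and $\|\mathbf{u}_n'\|_{L^2(0,T;X_2)}\le M_2$, and show that it admits a subsequence converging strongly in $L^p(0,T;X_0)$; this is precisely compactness of the embedding.

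The first ingredient is an Ehrling--Lions interpolation inequality. Since $X_1\hookrightarrow X_0$ is compact and $X_0\hookrightarrow X_2$ is continuous, for every $\eta>0$ there is a constant $C_\eta>0$ with
\[
\|v\|_{X_0}\le \eta\,\|v\|_{X_1}+C_\eta\,\|v\|_{X_2},\qquad v\in X_1 .
\]
I would establish this by contradiction: were it to fail, there would exist $\eta_0>0$ and a sequence $v_k$ with $\|v_k\|_{X_0}=1$ yet $1>\eta_0\|v_k\|_{X_1}+k\|v_k\|_{X_2}$; the compact embedding $X_1\hookrightarrow X_0$ then extracts an $X_0$-convergent subsequence whose nonzero limit has vanishing $X_2$-norm, contradicting the continuity of $X_0\hookrightarrow X_2$.

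The second ingredient is the time-translation estimate carried by the $W^{1,2}(0,T;X_2)$ bound. Writing $\mathbf{u}_n(t+h)-\mathbf{u}_n(t)=\int_t^{t+h}\mathbf{u}_n'(s)\,ds$ and applying Cauchy--Schwarz in $s$, I obtain the pointwise bound $\|\mathbf{u}_n(t+h)-\mathbf{u}_n(t)\|_{X_2}\le h^{1/2}\|\mathbf{u}_n'\|_{L^2(0,T;X_2)}\le M_2\,h^{1/2}$, uniformly in $t$ and $n$, whence $\sup_n\|\mathbf{u}_n(\cdot+h)-\mathbf{u}_n\|_{L^p(0,T-h;X_2)}\le M_2\,T^{1/p}h^{1/2}\to0$ as $h\to0$. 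Inserting the Ehrling inequality into the translated differences and using $\sup_n\|\mathbf{u}_n(\cdot+h)-\mathbf{u}_n\|_{L^p(0,T-h;X_1)}\le 2M_1$, I get
\[
\sup_n\big\|\mathbf{u}_n(\cdot+h)-\mathbf{u}_n\big\|_{L^p(0,T-h;X_0)}\le 2M_1\,\eta+C_\eta\,M_2\,T^{1/p}h^{1/2},
\]
so choosing $\eta$ small and then $h$ small shows the time translations tend to zero in $L^p(0,T;X_0)$ uniformly in $n$. For the averaging condition, the means $\int_{t_1}^{t_2}\mathbf{u}_n(t)\,dt$ are bounded in $X_1$ by H\"older, hence relatively compact in $X_0$ by the compact embedding.

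With both conditions of Simon's criterion verified, I conclude that $\{\mathbf{u}_n\}$ is relatively compact in $L^p(0,T;X_0)$, which is the assertion. Reflexivity of $X_1$ and $X_2$ enters only at the identification stage: for $1<p<\infty$ it lets me extract weak limits $\mathbf{u}_n\rightharpoonup\mathbf{u}$ in $L^p(0,T;X_1)$ and $\mathbf{u}_n'\rightharpoonup\mathbf{u}'$ in $L^2(0,T;X_2)$, so the strong $L^p(0,T;X_0)$ limit is identified with $\mathbf{u}$ and inherits the regularity. I expect the main obstacle to be the uniform-in-$n$ bookkeeping in the translation estimate --- in particular justifying the absolute continuity needed to write the increment as $\int\mathbf{u}_n'$, and correctly balancing the two scales through $\eta$ and $h$ --- together with quoting the precise form of Simon's criterion (equicontinuity of translations plus compactness of time averages) rather than reproving it from scratch.
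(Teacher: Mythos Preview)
Your proposal is correct and follows the standard route to Simon's compactness theorem: Ehrling's interpolation inequality combined with the $W^{1,2}(0,T;X_2)$ bound to obtain uniform equicontinuity of translations in $L^p(0,T;X_0)$, plus compactness of time averages from the $L^p(0,T;X_1)$ bound, then invoking Simon's characterization of relatively compact subsets of $L^p(0,T;X_0)$.

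However, there is nothing to compare against: the paper does not prove this lemma at all. It is stated in the appendix as a quoted result, attributed to \cite[Corollary 5]{Simon}, and used as a black box in the tightness argument of Section~3. So your proposal goes well beyond what the paper does --- you have supplied a self-contained argument where the authors simply cite the literature. One minor remark: your absolute-continuity concern is harmless, since $\mathbf{u}_n\in W^{1,2}(0,T;X_2)$ guarantees (after modification on a null set) absolute continuity with values in $X_2$, and the reflexivity hypothesis is indeed only used for identifying weak limits, not for the compactness itself.
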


The following Vitali convergence theorem is applied to identify the limit.
\begin{theorem}\cite[Chapter 3]{Kallenberg}\label{thm4.1} Let $p\geq 1$, $\{\mathbf{u}_n\}_{n\geq 1}\in L^p$ and $\mathbf{u}_n\rightarrow \mathbf{u}$ in probability. Then, the followings are equivalent:\\
i. $\mathbf{u}_n\rightarrow \mathbf{u}$ in $L^p$;\\
ii. the sequence $|\mathbf{u}_n|^p$ is uniformly integrable;\\
iii. $\mathrm{E}|\mathbf{u}_n|^p\rightarrow \mathrm{E}|\mathbf{u}|^p$.
\end{theorem}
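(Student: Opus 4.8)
The plan is to prove the three statements equivalent by closing the cycle (i) $\Rightarrow$ (iii) $\Rightarrow$ (ii) $\Rightarrow$ (i), working throughout on the probability space $(\Omega,\mathcal{F},\mathrm{P})$ and abbreviating $g_n=|\mathbf{u}_n|^p$ and $g=|\mathbf{u}|^p$. The implication (i) $\Rightarrow$ (iii) is immediate from the reverse triangle inequality: $|\,\|\mathbf{u}_n\|_{L^p}-\|\mathbf{u}\|_{L^p}\,|\le \|\mathbf{u}_n-\mathbf{u}\|_{L^p}\to 0$, and the continuity of $t\mapsto t^p$ upgrades this to $\mathrm{E}|\mathbf{u}_n|^p=\|\mathbf{u}_n\|_{L^p}^p\to \|\mathbf{u}\|_{L^p}^p=\mathrm{E}|\mathbf{u}|^p$.

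For (ii) $\Rightarrow$ (i) I would first note that uniform integrability of $\{g_n\}$ forces $\sup_n\mathrm{E}\,g_n<\infty$; extracting from $\mathbf{u}_n\to\mathbf{u}$ in probability an almost surely convergent subsequence and applying Fatou's lemma then gives $\mathrm{E}|\mathbf{u}|^p\le\liminf_n\mathrm{E}|\mathbf{u}_n|^p<\infty$, so $\mathbf{u}\in L^p$. Since $|\mathbf{u}_n-\mathbf{u}|^p\le 2^{p-1}(|\mathbf{u}_n|^p+|\mathbf{u}|^p)$ and a single integrable function is uniformly integrable, the family $\{|\mathbf{u}_n-\mathbf{u}|^p\}$ is dominated by a uniformly integrable family and hence uniformly integrable. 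It then remains to invoke the elementary fact that a uniformly integrable family converging to $0$ in probability converges to $0$ in $L^1$: splitting $\mathrm{E}|\mathbf{u}_n-\mathbf{u}|^p$ according to whether $|\mathbf{u}_n-\mathbf{u}|^p\le\delta$ or $>\delta$, the first part is at most $\delta$, while the second is an integral over a set of probability $\mathrm{P}(|\mathbf{u}_n-\mathbf{u}|^p>\delta)\to 0$, which uniform integrability renders arbitrarily small; letting $\delta\to 0$ yields $\mathbf{u}_n\to\mathbf{u}$ in $L^p$.

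The delicate step, and the one I expect to be the main obstacle, is (iii) $\Rightarrow$ (ii): recovering uniform integrability purely from convergence of the $p$-th moments. Here I would argue through truncations. Continuity of $t\mapsto|t|^p$ gives $g_n\to g$ in probability, and since $g_n\wedge K$ is bounded, bounded convergence (valid also under convergence in probability, via subsequences) yields $\mathrm{E}[g_n\wedge K]\to\mathrm{E}[g\wedge K]$ for each fixed $K$; subtracting this from $\mathrm{E}\,g_n\to\mathrm{E}\,g$ gives $\mathrm{E}[(g_n-K)^+]\to\mathrm{E}[(g-K)^+]$ as $n\to\infty$. Fixing $\epsilon>0$, I would choose $K_0$ with $\mathrm{E}[(g-K_0)^+]<\epsilon/2$ (possible as $g\in L^1$), then $N$ with $\mathrm{E}[(g_n-K_0)^+]<\epsilon$ for $n\ge N$, and finally enlarge $K_0$ to some $K_1$ so that the finitely many terms with $n<N$ also satisfy $\mathrm{E}[(g_n-K_1)^+]<\epsilon$; monotonicity of $K\mapsto(g_n-K)^+$ then forces $\sup_n\mathrm{E}[(g_n-K)^+]\le\epsilon$ for all $K\ge K_1$, so $\sup_n\mathrm{E}[(g_n-K)^+]\to 0$.

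To convert this into genuine uniform integrability I would use the pointwise bound $g_n\,\mathbf{1}_{\{g_n\ge 2K\}}\le 2(g_n-K)^+$, which gives $\sup_n\mathrm{E}[g_n\,\mathbf{1}_{\{g_n\ge 2K\}}]\le 2\sup_n\mathrm{E}[(g_n-K)^+]\to 0$ as $K\to\infty$; combined with $\sup_n\mathrm{E}\,g_n<\infty$ (from $\mathrm{E}\,g_n\to\mathrm{E}\,g<\infty$), this is precisely uniform integrability of $\{|\mathbf{u}_n|^p\}$, which closes the cycle. The only genuinely nontrivial analytic input is the truncation identity $\mathrm{E}\,g_n-\mathrm{E}[g_n\wedge K]=\mathrm{E}[(g_n-K)^+]$ together with the separate treatment of the finitely many small indices, so that the supremum over all $n$ is controlled uniformly in $K$.
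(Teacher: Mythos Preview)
Your proof is correct and complete; the cycle (i) $\Rightarrow$ (iii) $\Rightarrow$ (ii) $\Rightarrow$ (i) is closed cleanly, and the truncation argument for (iii) $\Rightarrow$ (ii) via $\mathrm{E}[(g_n-K)^+]\to\mathrm{E}[(g-K)^+]$ together with the pointwise bound $g_n\mathbf{1}_{\{g_n\ge 2K\}}\le 2(g_n-K)^+$ is the standard and correct route. Note, however, that the paper does not actually supply a proof of this statement: it is quoted in the appendix as a classical result from Kallenberg's textbook and used as a black box, so there is no ``paper's own proof'' to compare against. Your argument is essentially the textbook proof one finds in Kallenberg or any graduate probability text.
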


\section*{Acknowledgments}
The authors would like to thank the anonymous referees for their very valuable suggestions and constructive comments. Y. Tang is supported by the National Natural Science Foundation of China
(Grant No. 12171442). Z. Qiu is supported by the National Natural Science Foundation of China
(Grant No. 12401305), the National Science Foundation for Colleges and Universities in
Jiangsu Province (Grant No. 24KJB110011) and the National Science Foundation of Jiangsu
Province (Grant No. BK20240721).

\section*{Data availability}
Data sharing not applicable to this article as no datasets were generated or analysed
during the current study.

\section*{Statements and Declarations}
The authors have no relevant financial or non-financial interests to disclose.

\smallskip

\bigskip

\end{document}